\tikzset{equal/.style={
      /tikz/arrows=-,
      /tikz/commutative diagrams/double line}
}
\newcommand{\G}{\Gamma}
\newcommand{\g}{\gamma}
\newcommand{\Z}{\mathbb{Z}}
\newcommand{\with}{\mid}
\newcommand{\sgp}[1]{\langle #1\rangle}
\newcommand{\surjarrow}{\twoheadrightarrow}
\newcommand{\injarrow}{\hookrightarrow}
\newcommand{\abs}[1]{\left\lvert#1\right\rvert}
\newcommand{\isom}{\cong}
\newcommand{\quot}[2]{\raisebox{0.5mm}{\ensuremath{#1}}/\raisebox{-0.5mm}{\ensuremath{#2}}}
\newcommand{\fols}{\Rightarrow}
\newcommand{\R}{\mathbb{R}}
\newcommand{\nsub}{\lhd}
\newcommand{\disjcup}{\sqcup}
\newcommand{\bigdisjcup}{\bigsqcup}
\newcommand{\eps}{\varepsilon}
\newcommand{\comp}{\,\text{\raisebox{1pt}{{\tiny $\circ$}}}\,}
\newcommand{\id}{\textrm{id}}
\newcommand{\skel}[2]{{#1}^{(#2)}}
\theoremstyle{plain}
\newtheorem{thm}{Theorem}[section]
\newtheorem{prop}[thm]{Proposition}
\newtheorem{lemma}[thm]{Lemma}
\newtheorem{conj}[thm]{Conjecture}
\newtheorem{cor}[thm]{Corollary}
\newtheoremstyle{namedstyle}
  {}
  {\topsep}
  {\itshape}
  {}
  {\bfseries}
  {.}
  { }
  {#3}
\theoremstyle{namedstyle}
\newtheorem*{named}{Conjecture}
\theoremstyle{definition}
\newtheorem{defn}[thm]{Definition}
\newtheorem{remark}[thm]{Remark}
\newtheorem*{notation}{Notation}
\newtheorem*{unnumclaim}{Claim}
\newcommand{\abut}{\fols}
\newcommand{\nonetwo}{$n$-$(n+1)$-$(n+2)$}
\DeclareMathOperator{\FP}{FP}
\DeclareMathOperator{\wFP}{wFP}
\DeclareMathOperator{\Fin}{F}
\newcommand{\Hom}{H}
\DeclareMathOperator{\coker}{coker}
\long\def\tlist@if@empty@nTF #1{%
\expandafter\ifx\expandafter\\\detokenize{#1}\\%
\expandafter\@firstoftwo
\else
\expandafter\@secondoftwo
\fi
}
\newcommand{\ses}[4][]{\tlist@if@empty@nTF{#1}{
  #2\injarrow #3\surjarrow #4
  }{
  #2\injarrow #3\stackrel{#1}{\surjarrow} #4
  }
}
\newcommand\B[2][]{%
\mathrm{B}#2%
\tlist@if@empty@nTF{#1}{}{^{(#1)}}
}
\newcommand\E[2][]{%
\mathrm{E}#2%
\tlist@if@empty@nTF{#1}{}{^{(#1)}}
}
\newcommand{\xtilde}{\tilde}
\title{Subdirect products of groups and the $n$-$(n+1)$-$(n+2)$ Conjecture}
\author{Benno Kuckuck}
\date{May 13, 2013}
\begin{document}
\maketitle
\begin{abstract}
We analyse the subgroup structure of direct products of groups.
Earlier work on this topic has revealed that higher finiteness
properties play a crucial role in determining which groups
appear as subgroups of direct products of free groups or limit groups. 
Here, we seek to relate the finiteness properties of a subgroup
to the way it is embedded in the ambient product.
To this end we formulate a conjecture on finiteness properties of
fibre products of groups. We present different approaches to this conjecture,
proving a general result on finite generation of homology groups of
fibre products and, for certain special cases, results on the stronger
finiteness properties $\Fin_n$ and $\FP_n$.
\end{abstract}


\section{Introduction}

This article is about the subgroup structure of direct products of groups.
The questions we will try to shed light on are usually of the following
form: Given some class of groups whose subgroups are reasonably well-understood,
which groups can occur as subgroups of direct products of groups in this
class? How are these subgroups embedded in the ambient product and how does
the way they are embedded reflect on the structure of the subgroup? 

We will explain in this introduction how higher finiteness properties
of groups are central to resolving these questions. We will then propose
a pair of closely related conjectures concerning these properties. 
The first one gives a precise
description for the higher finiteness properties of fibre products of groups,
the fundamental construction that all subgroups of direct products can be built out of
in a certain sense. The second conjecture --- which can be deduced from the first ---
provides a characterisation of the higher finiteness properties of subgroups
of direct products in terms of an embedding condition. In the main part
of the paper we will present our progress on these conjectures, resolving
certain cases and obtaining partial results on the general statement.
In each case we will examine the consequences for the subgroup structure
of direct products.

\paragraph{Background}
In recent years much work has gone into the development of a structure theory
of subgroups of direct products of free groups or, more generally, surface groups and limit groups.
One of the guiding themes that has emerged from this research is an apparent dichotomy
between the subgroups which satisfy strong higher finiteness conditions and those which do not ---
a contrast first observed by Baumslag and Roseblade \cite{BaumslagRoseblade}.
They showed that although in a direct product of two finitely generated free groups
one can find uncountably many isomorphism classes of subgroups,
the only finitely presented subgroups are virtually
direct products of two free groups.

The ``wildness'' of general subgroups of a direct product of free groups
is further exemplified by a number of unsolvability results: For example, in
a direct product $F\times F$ of two free groups on two generators
there is a finitely generated subgroup $L\leq F\times F$ with
unsolvable membership problem \cite{Mihailova} and
unsolvable conjugacy problem \cite{MillerDecisionProblems}.
Furthermore, the isomorphism problem among finitely generated subgroups
is unsolvable \cite{MillerDecisionProblems},
there are uncountably many subgroups with non-isomorphic
abelianisation \cite{BridsonMiller}, and
there is no algorithm to compute the rank of the
abelianisation of an arbitrary finitely generated subgroup \cite{BridsonMiller}.
  
The other side of Baumslag and Roseblade's dichotomy was 
greatly expanded on in a series of papers
by Baumslag, Bridson, Howie, Miller and Short
\cite{BBMS,BHMSSurfaceGroups,BridsonHowie1,BridsonHowie2,BHMSLimitGroups,BHMSFinPres}.
In particular, they generalised the result from \cite{BaumslagRoseblade}
to direct products of any (finite) number of free groups, or more
generally, surface groups or limit groups\footnote{The latter are most easily defined
as the finitely generated \emph{fully residually free} groups, i.e.\ those 
finitely generated groups $\G$ such that
for every finite subset $X\subset\G$ there is a free group
$F$ and a homomorphism $f:\G\to F$ which is injective on $X$. This class
includes free groups, free abelian groups and fundamental groups of
orientable surfaces.}.
This generalisation says
that a subgroup $P\leq \G_1\times\dots\times\G_n$ in a direct product
of finitely generated free (surface, limit) groups $\G_1,\dots,\G_n$, which is of type $\Fin_n$, is itself
virtually a direct product of $n$ or fewer finitely generated free (surface, limit) groups \cite[Theorem A]{BHMSSurfaceGroups}, 
\cite[Theorem A]{BHMSLimitGroups}.
In fact, if the intersection of $P$ with each factor $\G_i$ is non-trivial,
then the product of these intersections will have finite index in
$P$ \cite[Theorem B]{BHMSSurfaceGroups}, \cite[Theorem C]{BHMSLimitGroups}. 
So, virtually, the only subgroups of type $\Fin_n$ are the ``obvious'' ones.

In both results the higher finiteness condition is essential:
The well-known examples due to Stallings \cite{Stallings} and
Bieri \cite{BieriQMN} of groups of type $\Fin_{n-1}$ which are not of type $\Fin_n$
arise as subgroups of direct products of free groups and serve to show
that the finiteness condition in the theorems by Bridson, Howie, Miller and Short
cannot be replaced by mere finite presentability.

This indicates that a thorough study of the
higher finiteness properties of subgroups of direct products
will be necessary in order to gain a full understanding of this class of groups.
In particular, there is great interest in
comprehending the class of finitely presented subgroups
of direct products of limit groups, as these are just the finitely
presented residually free groups. A systematic study of these
groups was started in \cite{BHMSFinPres}.

In light of the work cited above, one might think of
the subgroups of a direct product of $n$ free (or limit) groups
as arranged along a spectrum according to which finiteness properties
they satisfy. At the lower end, the class of all finitely generated
subgroups is hopelessly complicated. At the upper end, the only
subgroups of type $\Fin_n$ are the obvious ones.
But if we aim to understand all the finitely presented groups
we need to find out what happens as one imposes successively stronger
finiteness conditions. The hope is that the stark contrast
between ``finitely generated'' and ``type $\Fin_n$'' will,
to some extent, be resolved by considering the classes in between.

\paragraph{The Main Conjectures}
Our approach is built around two main conjectures. The first gives a criterion
for a subgroup of a direct product to be of type $\Fin_k$, in terms
of how it is embedded in the ambient product:
\begin{named}[The Virtual Surjections Conjecture]
  Let $\G_1,\dots,\G_n$ be groups of type $\Fin_k$ and
  $P\leq\G_1\times\dots\times\G_n$ a subgroup of their
  direct product. Assume that $P$ \emph{virtually surjects to $k$-tuples of factors},
  i.e.\ for any $i_1,\dots,i_k$ with $1\leq i_1<\dots<i_k\leq n$ the image of
  $P$ under the projection $P\to\G_{i_1}\times\dots\times\G_{i_k}$ is
  of finite index. Then $P$ is of type $\Fin_k$.
\end{named}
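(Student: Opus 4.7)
The strategy is induction on the number of factors $n$, combined with the fibre-product structure of subdirect products. After a standard reduction --- replacing $P$ by a finite-index subgroup and each $\G_i$ by $\pi_i(P)$, which preserves type $\Fin_k$ in both directions --- I may assume every $k$-fold projection $P \to \G_{i_1} \times \cdots \times \G_{i_k}$ is an honest surjection. The base case $n = k$ is then immediate: the surjection assumption forces $P$ to equal the full product $\G_1 \times \cdots \times \G_k$, a finite product of $\Fin_k$-groups.

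For the inductive step, assume $n > k$. Let $A := \pi_{1,\dots,n-1}(P) \leq \G_1 \times \cdots \times \G_{n-1}$. This $A$ surjects onto every $k$-tuple of the first $n-1$ factors (the projections agree with those of $P$), hence is of type $\Fin_k$ by induction. Define $L_A := P \cap (\{1\}^{n-1} \times \G_n)$ and $L_B := P \cap (\G_1 \times \cdots \times \G_{n-1} \times \{1\})$; these are normal in $\G_n$ and in $A$ respectively, and setting $Q := A/L_B \isom \G_n/L_A$, the group $P$ is the fibre product $A \times_Q \G_n$ fitting into parallel short exact sequences $\ses{L_B}{A}{Q}$ and $\ses{L_A}{\G_n}{Q}$. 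Moreover, for any $(k-1)$-tuple $i_1 < \cdots < i_{k-1} < n$, the surjectivity of $P \to \G_{i_1} \times \cdots \times \G_{i_{k-1}} \times \G_n$ restricted to the preimage of the $\G_n$-identity forces $L_B$ to surject onto $\G_{i_1} \times \cdots \times \G_{i_{k-1}}$; so a secondary downward induction on $k$ yields that $L_B$ is of type $\Fin_{k-1}$.

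From here I would invoke a fibre-product finiteness result of the $n$-$(n+1)$-$(n+2)$ form: given the two short exact sequences above with $A, \G_n$ of type $\Fin_k$, $L_A, L_B$ of type $\Fin_{k-1}$, and $Q$ of type $\Fin_{k+1}$, the fibre product $P$ is of type $\Fin_k$. This is the content of the $n$-$(n+1)$-$(n+2)$ Conjecture referenced in the title, and in the inductive setup above it is exactly what is needed to close the induction.

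The main obstacle is the hypothesis that $Q$ be of type $\Fin_{k+1}$: since $Q$ is merely a quotient of $\G_n$, which is only assumed of type $\Fin_k$, this extra regularity is not supplied by the virtual-surjections hypothesis, and cannot be arranged by passing to finite-index subgroups. A further complication is that virtual surjectivity gives no finiteness information about $L_A$ whatsoever --- it can be trivial even when $P$ is far from a direct product --- which rules out any symmetric approach that would try to exploit $L_A$ in the same way as $L_B$. I therefore expect the progress in the body of the paper to consist of establishing either side of the deduction in special cases where one of these gaps can be closed: small values of $k$ (where 1-2-3-type theorems are already available), low-dimensional homological control via the Lyndon--Hochschild--Serre spectral sequence giving finite generation of homology rather than full $\Fin_k$, or additional hypotheses on the factors $\G_i$ under which $Q$ is forced to have the extra finiteness property required.
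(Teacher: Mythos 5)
Your strategy --- induction on $n$, peeling off the last factor, identifying $P$ as a fibre product of $T := p_{1,\dots,n-1}(P)$ and $\G_n$ over $Q := \G_n/N_n$, and feeding this into the $n$-$(n+1)$-$(n+2)$ Conjecture --- is exactly the structure of the paper's reduction (Theorem~\ref{thm:implication}). The paper does not prove the Virtual Surjections Conjecture unconditionally either; it proves that the $n$-$(n+1)$-$(n+2)$ Conjecture \emph{for nilpotent quotients} implies it. So your correct assessment is that the proof terminates at an invocation of the fibre-product conjecture.

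However, you have misdiagnosed what you call ``the main obstacle.'' You claim that the hypothesis that $Q$ be of type $\Fin_{k+1}$ ``is not supplied by the virtual-surjections hypothesis, and cannot be arranged by passing to finite-index subgroups.'' That is false, and closing exactly this gap is one of the key points of the paper. Lemma~\ref{lem:nilp} (the ``Virtually Nilpotent Quotients'' lemma, a sharpening of Bridson--Miller) shows that if a subdirect product $P \leq \G_1 \times \cdots \times \G_n$ virtually surjects to $k$-tuples for some $k \geq 2$, then each $\G_i/N_i$ is virtually nilpotent, with explicit bound $\lceil\tfrac{n-1}{k-1}\rceil - 1$ on the nilpotency class. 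Since $Q \cong \G_n/N_n$ is also finitely generated (a quotient of the finitely generated group $\G_n$), and finitely generated virtually nilpotent groups are of type $\Fin_\infty$, $Q$ has far more than the $\Fin_{k+1}$ you need. The proof is elementary: partition $\{2,\dots,n\}$ into $\lceil\tfrac{n-1}{k-1}\rceil$ blocks of size at most $k-1$, adjoin index $1$ to each, use virtual surjection onto each augmented $k$-block to find elements of $P$ that vanish on that block and project to a given element of a finite-index subgroup of $\G_1$, and observe that the corresponding iterated commutator in $P$ is killed in every coordinate $i \geq 2$, hence lands in $N_1$.

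There is a second gap. Your ``secondary downward induction on $k$'' giving that $L_B$ is of type $\Fin_{k-1}$ bottoms out at $k-1 = 1$, where the statement ``virtual surjection to single factors implies finitely generated'' is \emph{false} in general --- the paper explicitly notes this (a symmetric fibre product over a finitely generated but not finitely presented quotient is not finitely generated). The paper's inductive claim therefore carries along the additional hypothesis that each $p_i(P)/(P\cap\G_i)$ is virtually nilpotent, which is harmless for $k \geq 2$ by Lemma~\ref{lem:nilp}, and is exactly what makes the $k=1$ base case true (Proposition~\ref{prop:vs1}, via the $0$-$1$-$2$ Lemma and the fact that finitely generated virtually nilpotent groups are finitely presented). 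You need to strengthen your induction hypothesis in the same way, verifying that the nilpotency condition is inherited by the inner subgroup $N_{1,\dots,n-1}$, which it is since $\G_i/(N_{1,\dots,n-1}\cap\G_i) = \G_i/(P\cap\G_i)$ for $i < n$.

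Finally, your remark that the lack of control on $L_A = N_n$ ``rules out any symmetric approach'' is a red herring: the $n$-$(n+1)$-$(n+2)$ Conjecture's hypotheses are already asymmetric (only $N_1$ needs a finiteness assumption), so no information on $L_A$ is required, and the paper's reduction never asks for any.
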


This conjecture appears in the literature in various forms,
e.g.\ \cite[Section 5]{Kochloukova} 
or \cite[Section 12.5]{DisonThesis}.

The case $k=2$ of this conjecture has been proved by Bridson, Howie, Miller
and Short in \cite{BHMSFinPres}. 
While the conjecture makes no assumptions on the factor groups, beyond
the finiteness condition $\Fin_k$, it is especially pertinent
for the case where all the $\G_i$ are free groups (or limit groups).
In this case, with certain non-triviality assumptions,
the converse holds, 
as was shown by Kochloukova \cite{Kochloukova}. So resolving
the Virtual Surjections Conjecture in this case would give a complete characterisation
of the higher finiteness properties of subgroups of direct products of
limit groups.

Any subgroup
of a direct product can be built up as an iterated fibre product.
We use this fact to make progress on the Virtual Surjections Conjecture
by studying fibre products.
In particular we investigate another
conjecture which proposes a precise
set of conditions for a fibre product to be of type
$\Fin_n$.

\begin{named}[The $n$-$(n+1)$-$(n+2)$ Conjecture]
  Let
  \begin{gather*}
  \ses[\pi_1]{N_1}{\Gamma_1}Q\\
  \ses[\pi_2]{N_2}{\Gamma_2}Q
  \end{gather*}
  be two short exact sequences of groups and
  \[P:=\{(\gamma_1,\gamma_2)\in\Gamma_1\times\Gamma_2\with \pi_1(\gamma_1)=\pi_2(\gamma_2)\}\]
  their \emph{fibre product}.
  
  If $N_1$ is of type $\Fin_n$, both $\Gamma_1$ and $\Gamma_2$ are of
  type $\Fin_{n+1}$ and $Q$ is of type $\Fin_{n+2}$, then
  $P$ is of type $\Fin_{n+1}$.
\end{named}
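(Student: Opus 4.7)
The plan is to start from the short exact sequence $\ses{N_1}{P}{\Gamma_2}$ obtained by projecting $P$ onto its second factor, whose kernel is the copy $N_1\times\{1\}$ of $N_1$ inside $P$. Had $N_1$ been of type $\Fin_{n+1}$, the closure of $\Fin_{n+1}$ under group extensions would give $P\in\Fin_{n+1}$ for free. The entire content of the conjecture is therefore to trade the one missing degree of finiteness of $N_1$ against the extra finiteness carried by the second short exact sequence $\ses{N_1}{\Gamma_1}{Q}$, in which $\Gamma_1$ is $\Fin_{n+1}$ and $Q$ is $\Fin_{n+2}$.

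I would proceed algebraically, via the Bieri--Eckmann characterisation of $\FP_k$ and the Lyndon--Hochschild--Serre spectral sequence
\[E_2^{p,q}=H^p(\Gamma_2,H^q(N_1,M))\Longrightarrow H^{p+q}(P,M).\]
To establish $P\in\FP_{n+1}$, feed a directed system $(M_\alpha)$ of $\Z P$-modules into the spectral sequence and compare $\varinjlim E_r^{p,q}(M_\alpha)$ with $E_r^{p,q}(\varinjlim M_\alpha)$, aiming for an isomorphism on the abutment for $p+q\le n$ and injectivity for $p+q=n+1$. Because $\Gamma_2\in\FP_{n+1}$ and $N_1\in\FP_n$, this $E_2$-comparison is already an isomorphism whenever $p\le n$ and $q\le n-1$; the obstructions concentrate at the bigradings $(0,n)$, $(1,n)$ and $(0,n+1)$, where either $H^n(N_1,-)$ only injects into colimits or $H^{n+1}(N_1,-)$ is a priori unconstrained. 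To close these, I would bring in the second Lyndon--Hochschild--Serre spectral sequence
\[F_2^{p,q}=H^p(Q,H^q(N_1,M))\Longrightarrow H^{p+q}(\Gamma_1,M),\]
whose abutment is controlled by $\Gamma_1\in\FP_{n+1}$ and whose outer functor $H^p(Q,-)$ commutes with filtered colimits up to $p=n+1$ by $Q\in\FP_{n+2}$. A zigzag argument that compares the two pages and exploits the known abutment behaviour for $\Gamma_1$ should, optimistically, upgrade $H^n(N_1,-)$ from injection to isomorphism on colimits and extract the required injectivity for $H^{n+1}(N_1,-)$, which is exactly what closes the first spectral sequence. To turn $\FP_{n+1}$ into $\Fin_{n+1}$ one then invokes the case $n=1$ of the conjecture, namely the Bridson--Howie--Miller--Short theorem cited in the introduction, to supply finite presentability of $P$ whenever $n\ge 1$.

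The main obstacle is precisely this upgrade step. The Bieri--Eckmann criterion demands colimit-preservation over all coefficient modules of the group in question, whereas the $\Z P$-modules arising as restrictions to $N_1$ form a strictly smaller class; a priori the differentials of both spectral sequences are uncontrolled, so there is no formal reason the zigzag should close. As pure module-theoretic data, the hypothesis that $N_1$ is normal in an $\Fin_{n+1}$-group with $\Fin_{n+2}$-quotient is genuinely weaker than $N_1\in\Fin_{n+1}$, so any proof has to use the specific provenance of these modules rather than treat $N_1$ as a free-standing group. Consistent with the conjecture's remaining open in general, I expect this step to require genuinely new input beyond pure spectral sequence bookkeeping; the paper will presumably instead resolve particular cases (small $n$ or restricted classes of $Q$) and establish a weaker statement such as finite generation of the homology of $P$ in the relevant degrees.
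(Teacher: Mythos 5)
You correctly recognise that this statement is a \emph{conjecture}: the paper never proves it in full, and your diagnosis of why the spectral-sequence route cannot close by pure bookkeeping matches the paper's own assessment. Your forecast of what the paper actually establishes is largely accurate. The Weak $n$-$(n+1)$-$(n+2)$ Theorem (Theorem~\ref{thm:weakn12}) obtains exactly the weakening you predict --- finite generation of integral homology, i.e.\ type $\wFP_{n+1}$, rather than $\Fin_{n+1}$ --- using the same two Lyndon--Hochschild--Serre spectral sequences, but taken in homology with trivial $\Z$-coefficients rather than in cohomology over a directed system of modules. Your remark that one can then use the $1$-$2$-$3$ Theorem to convert $\FP_{n+1}$ into $\Fin_{n+1}$ is also correct (for $n\geq 2$ one has $\Fin_{n+1}=\FP_{n+1}+$ finitely presented, and the hypotheses subsume the $1$-$2$-$3$ hypotheses), though you misattribute that theorem: it is due to Baumslag, Bridson, Miller and Short, and the second proof is in \cite{BHMSFinPres2}.

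The one concrete step your sketch leaves vague --- the ``zigzag argument that compares the two pages'' --- the paper closes not by a generic comparison of spectral sequences but by a simple module-level identity. The $\Gamma_2$-action on $\Hom_{n+1}(N_1)$ (coming from conjugation inside the fibre product $P$) factors through $\pi_2\colon\Gamma_2\surjarrow Q$ and agrees with the $Q$-action arising from $\ses{N_1}{\Gamma_1}{Q}$: for $(\gamma_1,\gamma_2)\in P$, conjugating an element of $N_1\times 1$ only involves $\gamma_1$. Hence $\Hom_0(\Gamma_2,\Hom_{n+1}(N_1))\cong\Hom_0(Q,\Hom_{n+1}(N_1))$, and the latter is controlled by the second short exact sequence together with $Q\in\FP_{n+2}$ (Lemma~\ref{lem:norm_wfpn}); this single corner $(0,n+1)$ is then fed back into the first spectral sequence via Lemma~\ref{lem:norm_wfpn2}. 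It is precisely this identity that transfers the extra finiteness of $Q$ into the $\ses{N_1}{P}{\Gamma_2}$ sequence. Your Bieri--Eckmann colimit strategy cannot be localised to a single entry of the $E_2$-page in this way, which is one concrete reason the zigzag does not close and why the paper settles for $\wFP$ rather than $\FP$.

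You also guess, correctly, that the paper will resolve ``restricted classes of $Q$'': the case of virtually abelian $Q$ is done in Section~\ref{sec:abelian} via $\Sigma$-invariants, Renz's theorem (Theorem~\ref{thm:sigma}) and Meinert's inequality, a technique entirely outside your homological framework. What you do not anticipate is the other family of partial results: the split case (Corollary~\ref{cor:splitn12}) and the reduction to $\Gamma_2$ free (Proposition~\ref{prop:reductiontofree}) in Section~\ref{sec:top} are proved by a topological argument with stacks of CW-complexes and an explicit $\pi_n$-contractibility criterion (Theorem~\ref{thm:contract}), a line of attack orthogonal to any spectral-sequence machinery.
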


The $n=1$ case of this is the $1$-$2$-$3$ Theorem, proved in \cite{BBMS} and
\cite{BHMSFinPres2}.

\paragraph{Results}
Our first result is the above-mentioned implication:
\begin{named}[Theorem \ref{thm:implication}]
  If the $n$-$(n+1)$-$(n+2)$ Conjecture holds whenever $Q$ is nilpotent,
  then the Virtual Surjections Conjecture holds.
\end{named}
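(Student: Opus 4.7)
The plan is to prove the Virtual Surjections Conjecture by induction on $n$, the number of factors in the direct product, with the nilpotent case of the $n$-$(n+1)$-$(n+2)$ Conjecture serving as the key input at the inductive step. First, since both the finiteness property $\Fin_k$ and the virtual surjection property are preserved by passage to finite-index subgroups, I may assume that $P\leq\G_1\times\cdots\times\G_n$ actually surjects onto every $k$-tuple of factors. The base case $n\leq k$ is immediate: $P$ has finite index in $\G_1\times\cdots\times\G_n$, which is of type $\Fin_k$, so $P$ is as well.

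For the inductive step ($n>k$), I would split the product as $(\G_1\times\cdots\times\G_{n-1})\times\G_n$ and write $P$ as the fibre product $A\times_Q B$, where $A$ is the image of $P$ in the first $n-1$ factors, $B=\G_n$ is the image in the last, and $Q=A/K_A=B/K_B$ is the common quotient (with $K_A=P\cap(\G_1\times\cdots\times\G_{n-1})$ and $K_B=P\cap\G_n$). The group $A$ satisfies the V.S.\ hypothesis on the product of $n-1$ factors, so is of type $\Fin_k$ by induction; $B$ is of type $\Fin_k$ by hypothesis.

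The central difficulty is that $Q$ need not be nilpotent, so one cannot directly invoke the hypothesis. The main manoeuvre is to replace $Q$ by its abelianisation $\bar Q = Q^{ab}$ --- a finitely generated abelian group (hence of type $\Fin_\infty$) --- and consider the enlarged fibre product $\bar P = A\times_{\bar Q} B \supseteq P$. Applying the nilpotent-$Q$ case of the $(k-1)$-$k$-$(k+1)$ Conjecture to $\bar P$ (using that $\bar Q$ is abelian, in particular nilpotent, and of type $\Fin_{k+1}$) would yield $\bar P$ of type $\Fin_k$, provided the kernel $\ker(A\to\bar Q)$ is of type $\Fin_{k-1}$. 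This auxiliary fact I would obtain by a secondary induction on $k$, applying the V.S.\ Conjecture in lower degree to that kernel, viewed as a suitable subgroup of the product of the first $n-1$ factors.

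The remaining obstacle --- and the principal technical difficulty --- is to descend from $\bar P$ (now known to be of type $\Fin_k$) back to $P$. The quotient $\bar P/P$ may be identified with $[Q,Q]$, which is generally infinitely generated, so a naive extension argument does not apply. My approach would be to iterate the construction along the lower central series of $Q$, forming a chain of fibre products $\bar P_r = A\times_{Q/\g_r Q} B$, each of type $\Fin_k$ by the nilpotent-$Q$ hypothesis, and combining this with a careful spectral sequence or cohomological analysis of the passage to the limit to conclude that $P$ itself is of type $\Fin_k$. Handling this descent cleanly, particularly when $Q$ is not residually nilpotent, is the hardest step of the argument.
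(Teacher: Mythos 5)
There is a genuine gap, and it is exactly at the step you identify as ``the central difficulty.'' The quotient $Q$ in the fibre product decomposition is \emph{not} arbitrary: the paper's key input (Lemma \ref{lem:nilp}, a quantified form of Bridson--Miller's Virtually Nilpotent Quotients Theorem) shows that if $P\leq\G_1\times\dots\times\G_n$ virtually surjects to $k$-tuples with $k\geq 2$, then each quotient $\G_i/(P\cap\G_i)$ is automatically virtually nilpotent. The proof is elementary: partition $\{2,\dots,n\}$ into $s=\lceil\frac{n-1}{k-1}\rceil$ blocks of size $\leq k-1$; virtual surjection to $k$-tuples lets one lift elements of a finite-index subgroup of $\G_1$ to elements of $P$ that are trivial on any prescribed block, and an $s$-fold iterated commutator of such lifts dies in every coordinate $i\geq 2$, hence lands in $N_1$. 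Since $Q\isom\G_n/N_n$ is therefore finitely generated and virtually nilpotent (hence of type $\Fin_\infty$, and the virtually nilpotent case reduces to the nilpotent case by passing to finite-index subgroups, as in the remark after Theorem \ref{thm:implication}), the nilpotent-$Q$ hypothesis of the $(k-1)$-$k$-$(k+1)$ Conjecture applies \emph{directly} to the fibre product $P=A\times_Q B$. Your entire apparatus of abelianisations, the enlarged fibre product $\bar P=A\times_{\bar Q}B$, and the descent along the lower central series is unnecessary --- and, as you concede, you cannot complete that descent ($[Q,Q]$ need not be finitely generated, $P$ need not even be normal in $\bar P$, and $Q$ need not be residually nilpotent). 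A proof that ends with an unresolved ``hardest step'' is not a proof.

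A second, related gap concerns the auxiliary induction. The correct structure is a double induction on $k$ and $n$, where the statement being proved must be \emph{strengthened} to: ``if $P$ virtually surjects to $k$-tuples \emph{and} the quotients $p_i(P)/(P\cap\G_i)$ are virtually nilpotent, then $P$ is of type $\Fin_k$.'' This strengthening is forced because the kernel $N_{1,\dots,n-1}=P\cap(\G_1\times\dots\times\G_{n-1})$ only virtually surjects to $(k-1)$-tuples, and when $k-1=1$ the Virtual Surjections Conjecture is simply false without the nilpotency hypothesis (a symmetric fibre product over a finitely generated, non-finitely-presented quotient of a free group is not even finitely generated). One checks that the quotients $\G_i/(N_{1,\dots,n-1}\cap\G_i)=\G_i/(P\cap\G_i)$ remain virtually nilpotent, so the strengthened hypothesis propagates; the base case $k=1$ is then handled by the $0$-$1$-$2$ Lemma together with the fact that finitely generated virtually nilpotent groups are finitely presented. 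Your proposal applies the inductive hypothesis to $\ker(A\to\bar Q)$, the preimage of $[Q,Q]$, which is the wrong group: it need not virtually surject to $(k-1)$-tuples, and its finiteness properties are not accessible by induction.
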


In Section \ref{sec:top}, we prove a special case of the $n$-$(n+1)$-$(n+2)$ Conjecture:
\begin{named}[Corollary \ref{cor:splitn12}]
The $n$-$(n+1)$-$(n+2)$ Conjecture holds whenever the second sequence splits.
\end{named}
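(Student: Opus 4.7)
The plan is to use the splitting of the second sequence to realise $P$ as a semidirect product $N_2 \rtimes \Gamma_1$, and then to construct a classifying space for $P$ with finite $(n+1)$-skeleton by exploiting the $Q$-equivariant CW structures that the splitting makes available.

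If $s \colon Q \to \Gamma_2$ is the section of $\pi_2$, then $\gamma_1 \mapsto (\gamma_1, s(\pi_1(\gamma_1)))$ is a splitting of the projection $P \twoheadrightarrow \Gamma_1$ with kernel $\{(1, n_2) : n_2 \in N_2\} \cong N_2$. Thus $P \cong N_2 \rtimes \Gamma_1$, where $\Gamma_1$ acts on $N_2$ through $\pi_1 \colon \Gamma_1 \to Q$ and the conjugation action of $Q$ on $N_2$ inherited from $\Gamma_2 = N_2 \rtimes Q$. Next, choose $K(\Gamma_1, 1)$ and $K(\Gamma_2, 1)$ with finite $(n+1)$-skeleta and $K(Q, 1)$ with finite $(n+2)$-skeleton. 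The quotient $E\Gamma_2/N_2$ is a $K(N_2, 1)$ on which $Q$ acts freely --- because the split condition $N_2 \cap s(Q) = \{1\}$ combined with the free $\Gamma_2$-action on $E\Gamma_2$ forces the induced $Q$-action on the quotient to be free --- and whose $Q$-orbits of cells correspond bijectively to cells of $K(\Gamma_2, 1)$. So this model of $K(N_2, 1)$ has finitely many $Q$-orbits of cells in each dimension $\leq n+1$, and an analogous statement holds for $K(N_1, 1) = E\Gamma_1/N_1$. A classifying space for $P$ can then be realised as the diagonal quotient $(E\Gamma_1/N_1 \times E\Gamma_2/N_2)/Q$.

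The principal obstacle is to extract from this diagonal quotient a CW model with finite $(n+1)$-skeleton. A naive cell count is infinite: the diagonal $Q$-action on the product is free with trivial stabiliser on each pair of cells, so every pair of $Q$-orbits contributes $|Q|$ many orbit representatives. Resolving this requires the $\Fin_{n+2}$ hypothesis on $Q$: a finite $(n+2)$-skeleton model of $K(Q, 1)$ allows one to build a more economical finite CW model of $K(P, 1)$ that absorbs the ``extra $|Q|$'' factor into an additional controlled dimension. I expect this explicit construction, carefully balancing the finite $Q$-equivariant skeleta of $K(N_1, 1)$, $K(N_2, 1)$, and $K(Q, 1)$, to constitute the main theorem of the topological section, of which this corollary is an immediate consequence.
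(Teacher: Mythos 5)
There is a genuine gap, and it sits exactly where your proposal stops being explicit. Your setup is fine as far as it goes: $P\isom N_2\nsemi\G_1$ via $\g_1\mapsto(\g_1,\sigma\pi_1(\g_1))$, and the diagonal quotient $(\E{\G_1}/N_1\times\E{\G_2}/N_2)/Q$ is indeed a $\B P$-complex with infinitely many cells per dimension whenever $Q$ is infinite. But the final step --- ``a finite $(n+2)$-skeleton for $Q$ lets one absorb the extra $\abs{Q}$ factor'' --- is not a proof; it is a restatement of the $n$-$(n+1)$-$(n+2)$ Conjecture itself. The diagonal-quotient model and its cell-count problem exist for \emph{every} fibre product, split or not, so any argument that resolves it without using the splitting would prove the full conjecture. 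Note also that your argument never uses the hypothesis that $N_1$ is of type $\Fin_n$ (the one finiteness hypothesis on a kernel that the conjecture actually provides), that the semidirect decomposition $P\isom N_2\nsemi\G_1$ is a dead end because nothing is assumed about the finiteness properties of $N_2$, and that the freeness of the $Q$-action on $\E{\G_2}/N_2$ has nothing to do with the splitting (it follows from freeness of the $\G_2$-action alone). Finally, the paper's split case needs no hypothesis on $Q$ at all, which is a further sign that the intended mechanism is different from the one you are reaching for.

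The actual use of the splitting is this: the map $\phi:\G_1\to P$, $\g_1\mapsto(\g_1,\sigma\pi_1(\g_1))$, restricts to the \emph{identity} on $N_1$, so the sequence $\ses{N_1}{P}{\G_2}$ (with $N_1$ of type $\Fin_n$ and $\G_2$ of type $\Fin_{n+1}$) receives a homomorphism from the sequence $\ses{N_1}{\G_1}{Q}$ whose middle group $\G_1$ is already known to be of type $\Fin_{n+1}$. The paper's Proposition \ref{prop:transferfn} then transfers type $\Fin_{n+1}$ from $\G_1$ to $P$: one builds a stack $W\to Y$ over a finite $(n+1)$-skeleton $\B{\G_2}$-complex with fibre $X=\B{N_1}$; the only obstruction to finiteness of $\skel{W}{n+1}$ is the infinitely many $(n+1)$-cells of $X$, and by Lemma \ref{lem:wh} it suffices to kill the $n$-spheres of $\skel Xn$ by finitely many $(n+1)$-cells mapped compatibly into $W$. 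These cells are supplied by a finite $n$-aspherical $(n+1)$-complex extracted from a $\B{\G_1}$-complex, glued in via a topological realisation of $\phi$. To repair your proof you would need to supply this transfer mechanism (Theorem \ref{thm:contract}); the diagonal quotient alone does not yield it.
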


We also prove the following reduction:
\begin{named}[Proposition \ref{prop:reductiontofree}]  
  If the $n$-$(n+1)$-$(n+2)$ Conjecture holds whenever $\G_2$
  is finitely generated free then it holds in general.
\end{named}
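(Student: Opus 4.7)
The plan is to reduce the general case to the assumed free case by taking a finite presentation of $\Gamma_2$ and comparing the associated fibre products.

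Since $\Gamma_2$ is of type $\Fin_{n+1}$ and hence finitely generated, I will choose a surjection $\pi\colon F\surjarrow\Gamma_2$ from a finitely generated free group $F$. The composition $\tilde\pi_2:=\pi_2\comp\pi\colon F\surjarrow Q$ fits, together with $\pi_1\colon\Gamma_1\to Q$, into the hypotheses of the conjecture with $\Gamma_2$ replaced by $F$: $N_1$ is of type $\Fin_n$, both $\Gamma_1$ and $F$ are of type $\Fin_{n+1}$, and $Q$ is of type $\Fin_{n+2}$. By the assumed free case, the fibre product
\[\tilde P:=\Gamma_1\times_Q F=\{(\gamma_1,f)\in\Gamma_1\times F\mid\pi_1(\gamma_1)=\tilde\pi_2(f)\}\]
is of type $\Fin_{n+1}$.

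The surjection $\pi$ then induces a homomorphism $\tilde P\surjarrow P$, $(\gamma_1,f)\mapsto(\gamma_1,\pi(f))$, whose kernel a direct check shows to be $\{1\}\times\ker\pi$, giving a short exact sequence $\ses{\ker\pi}{\tilde P}{P}$. The remaining step would be to transfer the property $\Fin_{n+1}$ from $\tilde P$ to its quotient $P$ along this extension.

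This last step is the main obstacle. A naive application of a Bieri-type extension result would require $\ker\pi$ to be of type $\Fin_n$, which in general fails badly --- e.g.\ for $F_2\surjarrow\Z^2$ the kernel $[F_2,F_2]$ is of infinite rank. What can be exploited instead is that, since $\Gamma_2$ is finitely presented (using $n+1\geq 2$), $\ker\pi$ is \emph{normally} finitely generated in $\tilde P$ by lifts of the relators of a finite presentation of $\Gamma_2$; for $n=1$ this alone already produces a finite presentation of $P$. For higher $n$, the extra higher-dimensional control should come from the $\Fin_{n+2}$-hypothesis on $Q$: the plan would be to construct a finite $(n+1)$-dimensional model of $K(P,1)$ out of a finite-type model of $K(\tilde P,1)$ by attaching finitely many $2$-cells coming from a presentation of $\Gamma_2$ together with higher cells realizing the successive syzygies, the finiteness of which is controlled by finite skeleta of a $K(Q,1)$.
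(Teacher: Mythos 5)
Your reduction step matches the paper's: take a finitely generated free group $F$ with an epimorphism $p\colon F\surjarrow\Gamma_2$, form the fibre product $\tilde P=\Gamma_1\times_Q F$ associated to $\pi_1$ and $\pi_2\comp p$, invoke the assumed free case to get $\tilde P$ of type $\Fin_{n+1}$, and note the induced surjection $\tilde P\surjarrow P$. Your observation that the kernel is $1\times\ker p$ is also correct.

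The difficulty comes in your final step, and here there is a genuine gap. You try to transfer $\Fin_{n+1}$ from $\tilde P$ to $P$ by regarding $\tilde P\surjarrow P$ as an \emph{extension} $\ses{\ker p}{\tilde P}{P}$ and ``killing'' $\ker p$; you correctly observe that the naive extension result fails (since $\ker p$ is typically an infinitely generated free group), and then propose instead to attach finitely many $2$-cells and higher syzygies to a finite-type $K(\tilde P,1)$ using the $\Fin_{n+2}$-hypothesis on $Q$. This ``plan'' is not a proof: you give no argument that the syzygies can be realized by finitely many cells in each dimension up to $n+1$, nor any mechanism by which the $\Fin_{n+2}$-hypothesis on $Q$ controls them. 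Attaching cells to kill a normal subgroup that is not of type $\Fin_n$ is precisely the hard part, and you have not supplied the missing ingredient.

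The paper sidesteps the bad kernel entirely by applying Proposition~\ref{prop:transferfn}. The right short exact sequences to compare are not $\ses{\ker p}{\tilde P}{P}$ but rather
\[
  \ses{N_1}{\tilde P}{F}\qquad\text{and}\qquad\ses{N_1}{P}{\Gamma_2},
\]
both of which have the \emph{same} kernel $N_1$ of type $\Fin_n$; moreover the map $\tilde P\to P$ restricts to the identity on $N_1$. Proposition~\ref{prop:transferfn} (proved via the stack-rebuilding criterion of Theorem~\ref{thm:contract}) then says exactly that $\Fin_{n+1}$ transfers from $\tilde P$ to $P$ under these conditions: one takes a finite $n$-aspherical complex built from $\tilde P$ and uses it, via the homomorphism $\tilde P\to P$, to ``cap off'' the $n$-spheres coming from the $K(N_1,1)$-part of the stack over $K(\Gamma_2,1)$. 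You should reformulate your last step around the fact that $N_1$ is shared with controlled finiteness properties, rather than around $\ker p$, and then the reduction is immediate.
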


In Section \ref{sec:weak} we prove a homological
variant of the $n$-$(n+1)$-$(n+2)$ Conjecture. We say that a group $\G$
is of type $\wFP_n$ (``weak $\FP_n$'') if the homology
groups $\Hom_k(\G,\Z)$ are finitely generated for $k\leq n$. This condition
has appeared in the above-mentioned work by Bridson, Howie, Miller and Short
and has proved useful there (in particular, in many of the theorems
we quoted earlier, the $\Fin_n$-conditions may be replaced by corresponding
$\wFP_n$-conditions).
\begin{named}[Theorem \ref{thm:weakn12} (The Weak $n$-$(n+1)$-$(n+2)$ Theorem)]
  Let
  \begin{gather*}
    \ses[\pi_1]{N_1}{\G_1}Q\\
    \ses[\pi_2]{N_2}{\G_2}Q
  \end{gather*}
  be short exact sequences with $N_1$ of type $\wFP_n$,
  $\G_1$ of type $\wFP_{n+1}$, $\G_2$ of type $\FP_{n+1}$,
  and $Q$ of type $\FP_{n+2}$. Then the associated fibre product
  is of type $\wFP_{n+1}$.
\end{named}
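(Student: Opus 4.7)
The plan is to work with the Lyndon--Hochschild--Serre spectral sequence associated with the extension $1\to N_1\to P\to\G_2\to 1$ (projection of the fibre product to the second factor):
\[
  E^2_{p,q}(P) \;=\; H_p(\G_2, H_q(N_1, \Z)) \;\Rightarrow\; H_{p+q}(P, \Z).
\]
To conclude $P$ is $\wFP_{n+1}$ it is enough to show $E^2_{p,q}(P)$ is finitely generated for every $p+q\le n+1$. For $q\le n$ the module $H_q(N_1, \Z)$ is finitely generated (as $N_1$ is $\wFP_n$), and since $\G_2$ is $\FP_{n+1}$, tensoring a finite-type partial $\Z\G_2$-projective resolution of $\Z$ with $H_q(N_1, \Z)$ yields a chain complex of finitely generated abelian groups whose homology in degrees $\le n+1$ is finitely generated. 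This disposes of every $(p,q)$ with $p+q\le n+1$ except $(0, n+1)$, where
\[
  E^2_{0, n+1}(P) \;=\; H_{n+1}(N_1, \Z)_{\G_2} \;=\; H_{n+1}(N_1, \Z)_Q
\]
(the $\G_2$-action on $H_{n+1}(N_1, \Z)$ factoring through $Q$).

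The hard step is showing this coinvariant group is finitely generated, since $H_{n+1}(N_1, \Z)$ itself need not be. The plan here is to compare with the parallel LHS spectral sequence associated with $1\to N_1\to\G_1\to Q\to 1$,
\[
  \tilde E^2_{p,q} \;=\; H_p(Q, H_q(N_1, \Z)) \;\Rightarrow\; H_{p+q}(\G_1, \Z),
\]
whose $(0, n+1)$-entry on the $E^2$-page coincides with ours. Since $\G_1$ is $\wFP_{n+1}$, the abutment $H_{n+1}(\G_1, \Z)$ is finitely generated, and $\tilde E^\infty_{0, n+1}$, sitting as a subgroup of it, is therefore finitely generated as well. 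As there are no outgoing differentials from $(0, n+1)$, the natural map $\tilde E^2_{0, n+1}\twoheadrightarrow \tilde E^\infty_{0, n+1}$ is surjective, and its kernel $K$ admits a finite filtration whose successive quotients are subquotients of the sources $\tilde E^2_{r, n+2-r} = H_r(Q, H_{n+2-r}(N_1, \Z))$ of the incoming differentials, for $r=2,\ldots,n+2$. Since $n+2-r\le n$, each $H_{n+2-r}(N_1, \Z)$ is finitely generated; combined with $Q$ being of type $\FP_{n+2}$, this makes each $\tilde E^2_{r, n+2-r}$, and every subquotient of it, finitely generated. Therefore $K$ is finitely generated, and the short exact sequence $0\to K\to \tilde E^2_{0, n+1}\to \tilde E^\infty_{0, n+1}\to 0$ forces $H_{n+1}(N_1, \Z)_Q$ to be finitely generated.

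Feeding this back gives the finite generation of $E^2_{0, n+1}(P)$ and completes the verification that every $E^2_{p,q}(P)$ with $p+q\le n+1$ is finitely generated, so $H_k(P, \Z)$ is finitely generated for $k\le n+1$, i.e.\ $P$ is $\wFP_{n+1}$. The main obstacle, and the reason for invoking the second spectral sequence, is that $H_{n+1}(N_1, \Z)$ itself lies beyond the reach of the hypothesis on $N_1$; one must route around it by exploiting the $\G_1$-sequence, whose abutment is controlled by the $\wFP_{n+1}$-hypothesis on $\G_1$ and whose relevant differential sources are controlled by the $\FP_{n+2}$-hypothesis on $Q$.
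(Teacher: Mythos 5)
Your computation reaches only part of the target. Under the paper's Definition~5.1, $\wFP_{n+1}$ requires that $\Hom_k(P',\Z)$ be finitely generated for all $k\leq n+1$ and \emph{all finite index subgroups} $P'\leq P$, not just $P$ itself. Your spectral sequence argument establishes finite generation of $\Hom_k(P,\Z)$ only. The paper disposes of this by a preliminary reduction: if $P'\leq P$ has finite index, then (after replacing $\G_i$ by $p_i(P')$) $P'$ is again a fibre product over $Q':=P'/(N_1'\times N_2')$ with $N_i':=P'\cap\G_i$, and one checks that the new data inherit the original hypotheses --- in particular $Q'$ is a finite index subgroup of $\G_1/N_1'$, which is an extension of $Q$ by the finite group $N_1/N_1'$ and hence still of type $\FP_{n+2}$. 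With that observation inserted, it suffices to treat $P$ alone, and your argument then goes through verbatim. As written, the concluding ``i.e.\ $P$ is $\wFP_{n+1}$'' is an overclaim.

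Apart from this missing reduction, your proof is essentially the paper's. Both run the LHS spectral sequence for $\ses{N_1}{P}{\G_2}$ to reduce the problem to finite generation of the corner term $\Hom_0(\G_2,\Hom_{n+1}(N_1))$, identify this with $\Hom_0(Q,\Hom_{n+1}(N_1))$ by observing that the conjugation action of $\G_2$ on $\Hom_*(N_1)$ factors through $\pi_2:\G_2\surjarrow Q$, and then run the LHS spectral sequence for $\ses{N_1}{\G_1}{Q}$, using $\G_1$ of type $\wFP_{n+1}$ to control $\tilde E^\infty_{0,n+1}$ and $Q$ of type $\FP_{n+2}$ together with $N_1$ of type $\wFP_n$ to control the sources $\tilde E^2_{r,n+2-r}$ of the incoming differentials. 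The paper packages the two spectral sequence steps as Lemmas~\ref{lem:norm_wfpn2} and~\ref{lem:norm_wfpn}; you carry them out inline, and your slightly more compressed treatment of the kernel of $\tilde E^2_{0,n+1}\surjarrow\tilde E^\infty_{0,n+1}$ (as an iterated extension of subquotients of the $\tilde E^2_{r,n+2-r}$) is correct and equivalent.
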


This implies a corresponding variant of the Virtual Surjections Conjecture:
\begin{named}[Corollary \ref{cor:weakvsc} (The Weak Virtual Surjections Theorem)]
  Let $k\geq 2$. Let $\G_1,\dots,\G_n$ be groups of type $\FP_k$ and $P\leq\G_1\times\dots\times\G_n$
  a subgroup of their direct product. If $P$ virtually surjects
  to $k$-tuples of factors then $P$ is of type $\wFP_k$.
\end{named}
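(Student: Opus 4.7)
The plan is to mimic the argument by which Theorem \ref{thm:implication} derives the Virtual Surjections Conjecture from the $n$-$(n+1)$-$(n+2)$ Conjecture, substituting Theorem \ref{thm:weakn12} for the (conjectural) input. The proof is a double induction: an outer induction on $k$ (based at $k=2$) and, for each $k$, an inner induction on $n\geq k$.

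The outer base case $k=2$ follows from the classical $k=2$ Virtual Surjections result of Bridson, Howie, Miller and Short \cite{BHMSFinPres}, which already provides the stronger conclusion $P\in\FP_2$. The inner base case $n=k$ is immediate: virtual surjection to the unique $k$-tuple means $P$ has finite index in $\G_1\times\dots\times\G_k$, a group of type $\FP_k$, so $P$ is itself of type $\FP_k$ and \emph{a fortiori} of type $\wFP_k$.

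For the inner inductive step $n>k$, I would decompose $P$ as a fibre product. Let $P_{n-1}$ denote the projection of $P$ to $\G_1\times\dots\times\G_{n-1}$, and replace $\G_n$ by the image of $P$ under the last projection; this image has finite index by virtual surjection to $1$-tuples, and inherits $\FP_k$. With $L=P\cap\G_n$, $Q=\G_n/L$, and $L_1=P\cap(\G_1\times\dots\times\G_{n-1})\leq P_{n-1}$, we get the fibre product
\[
P = P_{n-1}\times_{Q}\G_n
\]
corresponding to the two short exact sequences $\ses{L_1}{P_{n-1}}{Q}$ and $\ses{L}{\G_n}{Q}$. To invoke Theorem \ref{thm:weakn12} with parameter $n=k-1$, I would verify: $P_{n-1}$ is $\wFP_k$ (inner induction, since $P_{n-1}$ itself virtually surjects to $k$-tuples of the first $n-1$ factors); $\G_n$ is $\FP_k$ (given); $L_1$ is $\wFP_{k-1}$ (outer induction at $k-1$, using that for each $(k-1)$-tuple of the first $n-1$ factors, the $k$-tuple formed by appending $\G_n$ is virtually surjected onto by $P$, and elements of $P$ with trivial $n$-th coordinate witness the required virtual surjection of $L_1$); and $Q$ is $\FP_{k+1}$.

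The main obstacle is the last hypothesis. Since $Q$ is only a priori a quotient of the $\FP_k$ group $\G_n$, it need not be $\FP_{k+1}$. This is precisely the difficulty overcome in Theorem \ref{thm:implication}, where the Virtual Surjections Conjecture is reduced to the case where $Q$ is nilpotent. That nilpotent reduction is purely about the structure of the fibre product and applies verbatim in the weak setting: by passing to suitable finite-index subgroups of $P$ and of the factor groups, one arranges $Q$ to be a finitely generated nilpotent group, which is automatically of type $\FP_\infty$ and hence of type $\FP_{k+1}$. Theorem \ref{thm:weakn12} then delivers that this fibre product is of type $\wFP_{k}$; since $\wFP_k$ is preserved under passage to finite-index subgroups and overgroups, $P$ itself is of type $\wFP_k$, closing both inductions.
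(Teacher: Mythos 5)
Your overall strategy --- iterated fibre product decomposition, apply Theorem~\ref{thm:weakn12} at each step, observe that $Q$ is finitely generated virtually nilpotent hence $\FP_\infty$ --- is exactly what the paper does, and your inner inductive step is essentially correct (Lemma~\ref{lem:vskernel} is the fact you cite about $L_1$ virtually surjecting to $(k-1)$-tuples). But the outer base case is a genuine gap. You ground the induction at $k=2$ by invoking the Virtual Surjection to Pairs Theorem of Bridson--Howie--Miller--Short. That theorem requires the factor groups $\G_i$ to be \emph{finitely presented}, and concludes that $P$ is finitely presented. Here the $\G_i$ are only assumed to be of type $\FP_2$, and $\FP_2$ does not imply finite presentability, so the BHMS theorem does not apply. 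There is no ``homological $1$-$2$-$3$ Theorem'' available to substitute for it, precisely because proving the $\wFP$ statement is the whole point of the section.

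The paper resolves this by basing the outer induction at $k=1$ rather than $k=2$, proving the following strengthened claim by induction on $k\geq 1$: if the $\G_i$ are $\FP_k$, $P$ virtually surjects to $k$-tuples, \emph{and each $p_i(P)/(P\cap\G_i)$ is virtually nilpotent}, then $P$ is of type $\wFP_k$. The extra nilpotency hypothesis is redundant for $k\geq 2$ (Lemma~\ref{lem:nilp} supplies it automatically once one passes to the subdirect case by replacing $\G_i$ with $p_i(P)$), so this claim does imply the theorem; but carrying it in the inductive statement is essential, because when the inner induction steps down from $k$ to $k-1=1$ one falls outside the range where Lemma~\ref{lem:nilp} applies, and the $k=1$ base case --- Proposition~\ref{prop:vs1} --- genuinely needs it. Relatedly, the ``nilpotent reduction'' in your last paragraph is slightly mischaracterised: one does not pass to finite-index subgroups to ``arrange'' $Q$ to be nilpotent. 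Rather, once $P$ is a subdirect product, Lemma~\ref{lem:nilp} shows that $Q=\G_n/N_n$ is automatically virtually nilpotent, hence of type $\FP_\infty$, so the $\FP_{k+1}$ hypothesis on $Q$ in Theorem~\ref{thm:weakn12} comes for free.
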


This, in conjunction with the converse for the case of direct products of
limit groups, proved by Kochloukova \cite[Theorem 11]{Kochloukova} yields:
\begin{named}[Corollary \ref{cor:wFPkCharacterisation}]
  Let $\G_1,\dots,\G_n$ be non-abelian limit groups
  and $P\leq\G_1\times\dots\times\G_n$ a subdirect product
  such that $P\cap\G_i\neq 1$ for all $i$.
  Then $P$ is of type $\wFP_k$ (for some $k\geq 2$) if and only if $P$ virtually surjects to $k$-tuples.
\end{named}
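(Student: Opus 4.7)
The plan is to assemble this equivalence from two already-available results, so that the bulk of the work is just matching hypotheses. One direction is the Weak Virtual Surjections Theorem (Corollary \ref{cor:weakvsc}), and the other is Kochloukova's converse \cite[Theorem 11]{Kochloukova}; neither is reproved here.

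For the ``if'' direction, suppose $P$ virtually surjects to $k$-tuples. Limit groups are of type $\Fin_\infty$, so in particular each $\G_i$ is of type $\FP_k$. Thus the hypotheses of Corollary \ref{cor:weakvsc} are met, and that corollary delivers $P$ of type $\wFP_k$ at once. No use is made here of the subdirectness of $P$ or of the non-triviality of the intersections $P\cap\G_i$; those hypotheses enter only on the other side.

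For the ``only if'' direction, suppose $P$ is of type $\wFP_k$. The hypotheses of Kochloukova's theorem --- non-abelian limit group factors, $P$ subdirect, and $P\cap\G_i\neq 1$ for every $i$ --- are exactly the ones posited in our corollary, so her result applies and yields that $P$ virtually surjects to every $k$-tuple of factors. Combining the two implications gives the stated equivalence. There is no real obstacle; the substantive content sits in Corollary \ref{cor:weakvsc} and in Kochloukova's work, and this corollary is the observation that, under the non-triviality assumptions which Kochloukova's converse requires, the two conditions become sharp characterisations of each other.
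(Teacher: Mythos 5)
Your proposal is correct and is exactly the paper's argument: the corollary is stated there with no separate proof precisely because it is the immediate conjunction of Corollary \ref{cor:weakvsc} (one direction) and Kochloukova's Theorem 11 (the other), with the hypothesis $P\cap\G_i\neq 1$ for all $i$ being the ``full'' subdirect product condition her theorem requires. Your hypothesis-matching, including the observation that limit groups are of type $\FP_k$, is all that is needed.
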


Finally, in Section \ref{sec:abelian}, we treat another special case
of the two main conjectures:
\begin{named}[Theorem \ref{thm:abeliann12}]
  The $n$-$(n+1)$-$(n+2)$ Conjecture holds whenever $Q$ is virtually abelian.
\end{named}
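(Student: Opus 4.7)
The plan is to reduce to the case $Q=\Z^d$ and induct on the rank $d$, using Corollary \ref{cor:splitn12} at the base and in the inductive step. Since $Q$ is virtually abelian, it contains a finite-index subgroup $Q_0\cong\Z^d$. Taking preimages $\G_i^0:=\pi_i^{-1}(Q_0)$ produces finite-index subgroups of $\G_i$ of type $\Fin_{n+1}$ with unchanged kernels $N_i$, and the associated fibre product $P_0$ has finite index in $P$; by commensurability-invariance of type $\Fin_{n+1}$ we may assume $Q=\Z^d$.

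We induct on $d$. The case $d=0$ is immediate ($P=\G_1\times\G_2$), and for $d=1$ the quotient $Q=\Z$ is free, so the sequence $N_2\injarrow\G_2\surjarrow Q$ splits and Corollary \ref{cor:splitn12} applies. For the inductive step with $d\geq 2$, fix a decomposition $Q=K\oplus\bar Q$ with $K\cong\Z$ and $\bar Q\cong\Z^{d-1}$, let $\bar\pi_i:\G_i\to\bar Q$ be the induced surjections, and form the fibre product $\bar P:=\G_1\times_{\bar Q}\G_2$. The kernel of $\bar\pi_1$ is $M_1:=\pi_1^{-1}(K)$, an extension of $K\cong\Z$ by $N_1$ and hence of type $\Fin_n$; all remaining hypotheses are inherited, so the inductive hypothesis (applied over $\bar Q\cong\Z^{d-1}$, still virtually abelian of smaller rank) gives that $\bar P$ is of type $\Fin_{n+1}$. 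The original $P$ embeds into $\bar P$ as the kernel of a natural surjection $\bar P\to K\cong\Z$, and this surjection admits a section (obtained by lifting a generator of $K$ via $\pi_1$), so $\bar P\cong P\rtimes\Z$.

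The main obstacle is to upgrade the $\Fin_{n+1}$ property from $\bar P$ to its normal subgroup $P$, since in general the kernel of a surjection of a $\Fin_{n+1}$ group onto $\Z$ need not be $\Fin_{n+1}$. To overcome it, I would consider in parallel the auxiliary fibre product $P':=\G_1\times_K\G_2$ formed via the projections $p\comp\pi_i:\G_i\to K$, where $p:Q\to K$ is the projection. Because $K=\Z$ is free, the $\G_2$-side of this construction splits, and Corollary \ref{cor:splitn12} (applied with $\Fin_n$-kernel $\pi_1^{-1}(\bar Q)$) gives that $P'$ is of type $\Fin_{n+1}$. Inside $\G_1\times\G_2$ one has $P=\bar P\cap P'$, corresponding to the decomposition $\phi=(\bar\phi,\phi_K)$ of the abelian surjection $\phi:\G_1\times\G_2\to Q$, $(\g_1,\g_2)\mapsto\pi_1(\g_1)-\pi_2(\g_2)$, into its $\bar Q$- and $K$-components. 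The finiteness of $P$ should then follow from simultaneous control of these two ``orthogonal'' fibre products, most naturally via a Bieri--Neumann--Strebel--Renz $\Sigma$-invariant analysis verifying that every character of $Q$ pulls back along $\phi$ to an element of $\Sigma^{n+1}(\G_1\times\G_2)$. Carrying out this $\Sigma$-invariant computation is the key technical step.
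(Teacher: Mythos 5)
Your reduction to $Q\cong\Z^d$ and your structural observations (that $\bar P\cong P\rtimes\Z$, that $P=\bar P\cap P'$, that $P'$ is of type $\Fin_{n+1}$ by Corollary \ref{cor:splitn12}) are all correct, but the proof has a genuine gap: the step you yourself flag as ``the key technical step'' is exactly the content of the theorem, and you do not carry it out. Knowing that $\bar P$ and $P'$ are of type $\Fin_{n+1}$ gives you nothing for free about $P$: type $\Fin_{n+1}$ does not descend to kernels of surjections onto $\Z$ (this is precisely the Stallings--Bieri phenomenon discussed in the paper), nor is it inherited by intersections of subgroups. So the induction on $d$ never closes, and the auxiliary fibre product $P'$ does not rescue it. What is missing is the actual $\Sigma$-invariant computation, and in particular the one tool that makes it possible, namely Meinert's inequality, which you never invoke.

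The paper's argument needs no induction on rank at all. Since $P=\ker\phi$ with $\phi:\G_1\times\G_2\to Q$ onto and $Q$ abelian, Theorem \ref{thm:sigma} reduces everything to showing $[\chi]\in\Sigma^{n+1}(\G_1\times\G_2)$ for every nontrivial character $\chi$ vanishing on $P$. One checks $\G_1\times\G_2=\G_1P=\G_2P$, so both restrictions $\chi|_{\G_1},\chi|_{\G_2}$ are nontrivial; each $\chi|_{\G_i}$ vanishes on $N_i=P\cap\G_i$, so Theorem \ref{thm:sigma} applied \emph{in the converse direction} to $N_i\nsub\G_i$ (using that $N_1$ is of type $\Fin_n$ and $N_2$ of type $\Fin_0$, or more generally $\Fin_k$ and $\Fin_l$ with $k+l\geq n$) places $[\chi|_{\G_i}]$ in the appropriate $\Sigma$-invariant of the factor; Meinert's inequality then yields $[\chi]\in\Sigma^{n+1}(\G_1\times\G_2)$. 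Your plan gestures at this (``verifying that every character of $Q$ pulls back along $\phi$ to an element of $\Sigma^{n+1}(\G_1\times\G_2)$'') but supplies neither the mechanism for getting the restricted characters into $\Sigma^k(\G_i)$ nor the product formula for assembling them, so the proof is not complete.
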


As a consequence, we again obtain a corresponding case of the Virtual Surjections
Conjecture. If $P\leq\G_1\times\dots\times\G_n$ is a subdirect product\footnote{i.e.\ all
the projections $P\to\G_i$ are onto}, following \cite{BBMS}, we say
it is of Stallings-Bieri type if the $\G_i/(P\cap\G_i)$ are virtually abelian.
Then $P$ is virtually the kernel of a homomorphism to an abelian group
(see Corollary \ref{cor:virtabelian} for a precise statement).
Recovering a result by Kochloukova, we show that the Virtual Surjections
Conjecture holds for subgroups of Stallings-Bieri type.

Furthermore, we show that any subgroup $P\leq\G_1\times\dots\times\G_n$
of a direct product which virtually
surjects to $k$-tuples for some $k>\frac{n}{2}$
is of Stallings-Bieri type (Corollary \ref{cor:virtabelian}).

We thus obtain:
\begin{named}[Corollary \ref{cor:fewfactorsvsc}]
  The Virtual Surjections Conjecture holds for $k>\frac{n}{2}$.
\end{named}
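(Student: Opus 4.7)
The plan is to combine two ingredients from Corollary \ref{cor:virtabelian}: (i) that any subgroup $P \leq \G_1 \times \dots \times \G_n$ which virtually surjects to $k$-tuples with $k > n/2$ is of Stallings--Bieri type, and (ii) that the Virtual Surjections Conjecture holds for subgroups of Stallings--Bieri type (recovering the result of Kochloukova mentioned above).

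First I would reduce to the subdirect product case. Since $k \geq 1$, virtual surjection to $k$-tuples implies in particular that each projection $P \to \G_i$ has image $P_i$ of finite index in $\G_i$. Finite-index subgroups of groups of type $\Fin_k$ are again of type $\Fin_k$, so replacing each $\G_i$ by $P_i$ preserves all hypotheses and makes $P$ a subdirect product in $P_1 \times \dots \times P_n$. After this reduction, the first part of Corollary \ref{cor:virtabelian} (using the numerical hypothesis $k > n/2$) yields that $P$ is of Stallings--Bieri type: each quotient $\G_i / (P \cap \G_i)$ is virtually abelian.

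Finally, the second part of Corollary \ref{cor:virtabelian} applies directly: a subdirect product of Stallings--Bieri type that virtually surjects to $k$-tuples is of type $\Fin_k$. This gives the desired conclusion.

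The main substance of the argument sits inside Corollary \ref{cor:virtabelian}, and ultimately in Theorem \ref{thm:abeliann12}: the numerical bound $k > n/2$ is precisely what forces the quotients occurring in the iterated fibre-product description of $P$ to be virtually abelian, so that the virtually abelian case of the $n$-$(n+1)$-$(n+2)$ Conjecture (rather than the still-open full conjecture) suffices to drive the fibre-product induction underlying Theorem \ref{thm:implication}. Granted Corollary \ref{cor:virtabelian}, the statement here is a direct combination of its two clauses and I would expect the proof to occupy only a few lines.
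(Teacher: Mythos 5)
Your overall plan is the same as the paper's, and your final paragraph correctly locates the real substance (Theorem \ref{thm:abeliann12} driving the fibre-product induction of Theorem \ref{thm:implication}). However, there is a misattribution that breaks the deduction as written. Corollary \ref{cor:virtabelian} contains only structural information about $P$: that each $p_i(P)/N_i$ is virtually abelian, and that $P$ is virtually the kernel of a homomorphism to an abelian group. It asserts nothing about $P$ being of type $\Fin_k$ --- indeed its hypotheses place no finiteness condition on the $\G_i$ at all, so no such conclusion could follow. Your ingredient (ii), that a subdirect product with virtually abelian quotients which virtually surjects to $k$-tuples is of type $\Fin_k$, is a genuinely separate statement: it is Corollary \ref{cor:abelianvsc}, which requires the $\G_i$ to be of type $\Fin_k$ and is proved by feeding the abelian case of the $n$-$(n+1)$-$(n+2)$ Conjecture (Theorem \ref{thm:abeliann12}) through the induction in Theorem \ref{thm:implication}. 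Merely knowing that $P$ is virtually $\ker\phi$ for some map $\phi$ to an abelian group does not by itself yield a finiteness conclusion; one still needs the $\Sigma$-theoretic control (via Meinert's inequality, applied along the iterated fibre-product decomposition) that those two results provide. With this one reference repaired --- cite Corollary \ref{cor:abelianvsc} for step (ii) rather than the second clause of Corollary \ref{cor:virtabelian} --- your argument coincides with the paper's: the first clause of Corollary \ref{cor:virtabelian} (really Lemma \ref{lem:nilp} with $s\leq 2$) gives virtually abelian quotients, and Corollary \ref{cor:abelianvsc} then delivers type $\Fin_k$.
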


\paragraph{Funding} This work was supported by a grant from the Engineering
and Physical Sciences Research Council.

\paragraph{Acknowledgements} I would like to
thank my DPhil supervisor, Martin Bridson, for introducing me to the topic
and for his invaluable help and guidance in preparing this paper.

\section{Fibre products and subdirect products}
\label{sec:fibreproducts}

We shall start off by giving the basic definitions and
recalling some fundamental facts about
the construction of fibre products and how that basic construction can be used
to iteratively
build up subdirect products of groups.

\begin{defn}
A subdirect product of the groups $\Gamma_1,\dots,\Gamma_n$ is
a subgroup $P\leq\Gamma_1\times\dots\times\Gamma_n$ of the direct product
such that the canonical projections $P\to\Gamma_i$ are surjective
for all $i$.
\end{defn}

When studying subgroups of direct products it is often possible to
restrict to the case of subdirect products: If $P\leq\G_1\times\dots\times\G_n$
is a subgroup with the projection homomorphisms denoted by
$p_i:\G_1\times\dots\times\G_n\to\G_i$, then $P$ is a subdirect product
of $p_1(P),\dots,p_n(P)$.

One way of obtaining a subdirect product of two groups
is via the fibre product construction:
Let
\begin{gather*}
\ses[\pi_1]{N_1}{\G_1}Q\\
\ses[\pi_2]{N_2}{\G_2}Q
\end{gather*}
be two short exact sequences. Then we call
\[P:=\{(\g_1,\g_2)\in\G_1\times\G_2\with\pi_1(\g_1)=\pi_2(\g_2)\}\]
the fibre product associated to these sequences (or to $\pi_1$ and $\pi_2$).\footnote{If the two 
sequences are identical, i.e.\ $\pi_1=\pi_2$, $P$ 
is often called a ``symmetrical'' or ``untwisted'' fibre product (associated to $\pi_1$), but
we will make little use of this concept.}

A crucial observation is that to any such fibre product we can
associate two exact sequences of the form $\ses{N_1}P{\G_2}$
and $\ses{N_2}P{\G_1}$. More precisely:
\begin{lemma}
  The fibre product above fits into a commutative diagram
  \[\begin{tikzcd}
  &1\times N_2\ar[equal]{r}\ar[hook]{d}&N_2\ar[hook]{d}\\
  N_1\times 1\ar[hook]{r}\ar[equal]{d}&P\ar[two heads]{r}{p_2}\ar[two heads]{d}{p_1}&\G_2\ar[two heads]{d}{\pi_2}\\
  N_1\ar[hook]{r}&\G_1\ar[two heads]{r}{\pi_1}&Q
  \end{tikzcd}\]
  with exact rows and columns, where $p_1:P\to\G_1$ and $p_2:P\to\G_2$ are
  the restrictions of the canonical projections $\G_1\times\G_2\to\G_i$
  and all the injections are inclusion homomorphisms.
\end{lemma}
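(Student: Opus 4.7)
The plan is to verify each of the six claims that go into the diagram essentially by unwinding definitions: that $N_1\times 1$ and $1\times N_2$ sit inside $P$, that the projections $p_1,p_2$ are surjective, that their kernels are the claimed subgroups, and finally that the squares commute. Since everything hinges on the defining condition $\pi_1(\g_1)=\pi_2(\g_2)$, the proof is bookkeeping, and I do not expect a serious obstacle.

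First I would check containment: for $n_1\in N_1$, we have $\pi_1(n_1)=1=\pi_2(1)$, so $(n_1,1)\in P$, giving $N_1\times 1\leq P$, and symmetrically $1\times N_2\leq P$. Next, I would verify surjectivity of $p_1$: given $\g_1\in\G_1$, surjectivity of $\pi_2$ supplies some $\g_2\in\G_2$ with $\pi_2(\g_2)=\pi_1(\g_1)$, so $(\g_1,\g_2)\in P$ maps to $\g_1$; the same argument with roles swapped gives surjectivity of $p_2$.

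Then I would compute kernels. An element $(1,\g_2)\in P$ satisfies $\pi_2(\g_2)=\pi_1(1)=1$, i.e.\ $\g_2\in N_2$; conversely every such $(1,\g_2)$ lies in $P$ and in $\ker p_1$. Hence $\ker p_1=1\times N_2$, and this kernel is identified with $N_2$ via the inclusion on the right column. Symmetrically $\ker p_2=N_1\times 1\isom N_1$. This gives exactness of the middle row and the middle column.

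Finally I would observe that the bottom-left square commutes because $\pi_1\circ p_1$ restricted to $N_1\times 1$ is the zero map, matching the composite along $N_1\injarrow\G_1\surjarrow Q$ restricted to $N_1$; and the top-right square commutes for the symmetric reason. The outer rows and columns are the two given short exact sequences, so exactness there is immediate. This exhausts the claims in the diagram.
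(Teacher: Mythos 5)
Your proposal is correct and proceeds exactly as the paper does: unwinding the definition of $P$ to verify containment of $N_1\times 1$ and $1\times N_2$, surjectivity of the projections, the kernel identifications, and commutativity of the squares (the paper's version is terser, doing one case and invoking symmetry). The only cosmetic slip is in your last paragraph: the bottom-left square asserts that $p_1$ restricted to $N_1\times 1$ is the identification with $N_1$ (not a statement about $\pi_1\circ p_1$), but this fact is already contained in your kernel computation.
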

\begin{proof}
  The lower right square commutes simply by the definition
  of the fibre product. If $(\g_1,\g_2)\in P$ is in the kernel
  of $p_2$, then $\g_2=1$ and $\pi_1(\g_1)=\pi_2(\g_2)=1$, 
  so $\g_1\in\ker\pi_1=N_1$. Obviously, $N_1\times 1$ is sent 
  isomorphically to $N_1\leq\G_1$ under the projection $p_1$.
  The rest follows by symmetry.
\end{proof}

We will usually identify $N_1\times 1$ with $N_1$ and
$1\times N_2$ with $N_2$.

It is a basic observation that every subdirect product of
two groups is a fibre product. Precisely, we have the following:

\begin{lemma}
  Let $P\leq\Gamma_1\times\Gamma_2$ be a subdirect product\label{subdfibre}
  of $\Gamma_1$ and $\Gamma_2$. Let $N_1:=P\cap\Gamma_1$ and $N_2:=P\cap\Gamma_2$, 
  where $\Gamma_1$ denotes
  the subgroup $\Gamma_1\times 1\leq\Gamma_1\times\Gamma_2$ and similarly
  for $\Gamma_2$ (i.e.\ we have identified each $\Gamma_i$ with its image
  under the natural inclusion $\Gamma_i\injarrow\Gamma_1\times\Gamma_2$). Then
  \[\quot{\G_1}{N_1}\isom \quot{P}{N_1\times N_2}\isom\quot{\G_2}{N_2}\]
  and $P$
  is the fibre product associated to two short exact sequences
  \begin{alignat*}{3}
  \ses{N_1}{&\,\G_1}{&\,Q}\\
  \ses{N_2}{&\,\G_2}{&\,Q}&.
  \end{alignat*}
\end{lemma}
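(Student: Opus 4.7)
The plan is to construct the common quotient $Q$ explicitly and verify all the asserted isomorphisms through the standard isomorphism theorems, so that $P$ then tautologically coincides with the associated fibre product.

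First, I would verify that $N_1 \trianglelefteq \G_1$ (and symmetrically $N_2\trianglelefteq\G_2$). Take $(n_1,1) \in N_1$ and any $g_1 \in \G_1$; by subdirectness choose $g_2 \in \G_2$ with $(g_1,g_2) \in P$. Then $(g_1,g_2)(n_1,1)(g_1,g_2)^{-1} = (g_1 n_1 g_1^{-1}, 1) \in P$, hence in $N_1$. So the quotients $\G_1/N_1$ and $\G_2/N_2$ are well-defined groups.

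Second, I would define $\varphi : \G_1/N_1 \to \G_2/N_2$ by $g_1 N_1 \mapsto g_2 N_2$, where $(g_1,g_2)$ is any element of $P$ projecting to $g_1$. Well-definedness is a short check: two lifts of the same $g_1$ differ by an element of $\ker p_1 = N_2$, and replacing $g_1$ by $g_1 n_1$ corresponds (via the element $(n_1,1) \in P$) to leaving $g_2$ unchanged. The homomorphism property follows because $P$ is a subgroup, and swapping the roles of the two factors gives the inverse, so $\varphi$ is an isomorphism; call the common group $Q$.

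Third, the map $\psi : P \to Q$ sending $(g_1,g_2) \mapsto g_1 N_1$ (equivalently $g_2 N_2$) is a surjective homomorphism. An element lies in its kernel iff $g_1 \in N_1$ and $g_2 \in N_2$, i.e.\ $\ker\psi = N_1 \times N_2$, yielding the middle isomorphism $P/(N_1 \times N_2) \isom Q$.

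Finally, let $\pi_1 : \G_1 \surjarrow Q$ and $\pi_2 : \G_2 \surjarrow Q$ be the quotient maps (with $Q$ identified via $\varphi$). By construction every $(g_1,g_2) \in P$ satisfies $\pi_1(g_1) = \pi_2(g_2)$. For the converse, given $(g_1,g_2) \in \G_1 \times \G_2$ with $\pi_1(g_1) = \pi_2(g_2)$, choose $(g_1,g_2') \in P$ using subdirectness; then $g_2 N_2 = g_2' N_2$, so $g_2 = g_2' n_2$ for some $n_2 \in N_2$, and $(g_1,g_2) = (g_1,g_2')\cdot(1,n_2) \in P$. Thus $P$ coincides with the fibre product of $\pi_1$ and $\pi_2$.

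The content of the lemma is routine; the only mild subtlety is keeping the identifications $N_i \leq P$ versus $N_i \trianglelefteq \G_i$ consistent, but no real obstacle arises.
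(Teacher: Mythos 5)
Your proof is correct and follows essentially the same route as the paper's: identify the common quotient $Q$, observe that $P$ sits inside the resulting fibre product by construction, and establish the reverse inclusion by adjusting a lift by an element of $N_2$. The only cosmetic difference is that the paper takes $Q:=P/(N_1\times N_2)$ as the primary object and induces $\pi_1,\pi_2$ by factoring the quotient map through the projections, whereas you take $Q:=\G_1/N_1$ and build the isomorphism to $\G_2/N_2$ by hand; the content is identical.
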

\begin{proof}
  Let $p_i:P\to\G_i$ for $i=1,2$ denote the restriction of the canonical projection
  $\G_1\times\G_2\to\G_i$. Clearly, $N_1$ is the kernel of $p_2$, so
  $N_1$ is normal in $P$. Symmetrically, $N_2$ is normal in $P$, as well.
  Therefore $N_1\times N_2$ is normal in $P$ and the quotient map
  $P\to P/(N_1\times N_2)$ factors over both projections $p_i$, giving
  a commutative square
  \[\begin{tikzcd}
    P\ar{r}{p_2}\ar{d}[swap]{p_1}&\G_2\ar{d}{\pi_2}\\
    \G_1\ar{r}{\pi_1}&\quot{P}{N_1\times N_2}
  \end{tikzcd}\]
  The kernel of $\pi_1$ is $p_1(N_1\times N_2)=N_1$, and similarly
  $\ker\pi_2=N_2$. Denoting $P/(N_1\times N_2)$ by $Q$, we thus have
  two short exact sequences:
  \begin{alignat*}{3}
  \ses[\pi_1]{N_1}{&\,\G_1}{&\,Q}\\
  \ses[\pi_1]{N_2}{&\,\G_2}{&\,Q}&.
  \end{alignat*}
  Let $P'$ denote their fibre product. Commutativity of the above square
  means that $P\leq P'$. Conversely, if $\g'=(\g_1',\g_2')\in P'$, set
  $q:=\pi_1(\g_1')=\pi_2(\g_2')$. Then
  pick a $\g:=(\g_1,\g_2)\in P$ with 
  \[\g(N_1\times N_2)=q.\]
  Then $\pi_i(\g_i)=q=\pi_i(\g_i')$ for $i=1,2$ by definition of the maps $\pi_i$.
  And hence $\g\g'^{-1}\in N_1\times N_2\subset P$ so $\g'\in P$.
  
  This shows that $P=P'$, so $P$ is the fibre product associated to the
  two short exact sequences.
\end{proof}

Indeed, not only is every subdirect product of two groups
a fibre product, but we can build up any subdirect product
of three or more groups by iterating the fibre product construction.
The following notation will be frequently useful, when dealing with subgroups
of direct products of more than two groups:

\begin{notation}
Let
$\G_1,\dots,\G_n$ be groups and
$P\leq\Gamma:=\Gamma_1\times\dots\times\Gamma_n$ a subgroup of their
direct product.
For $J=\{i_1,\dots,i_m\}\subset\{1,\dots,n\}$ with $i_1,\dots,i_m$ pairwise distinct,
we will denote by $\Gamma_J$ the group $\Gamma_{i_1}\times\dots\times\Gamma_{i_m}$,
which we will also, via the canonical injection, consider a subgroup of $\Gamma$.
Furthermore, we will denote by $p_J$ the canonical projection map $P\to\Gamma_J$
and by $N_J$ the intersection $P\cap\Gamma_J$. Sometimes we will omit curly braces 
to unclutter
the notation, e.g.\ we will write $p_{1,\dots,n-1}$ for the projection
$P\to\Gamma_1\times\dots\times\Gamma_{n-1}$, in place of $p_{\{1,\dots,n-1\}}$.
\end{notation}

Now let $P\leq\Gamma_1\times\dots\times\Gamma_n$ be a subdirect product. 
Using the notation introduced above, set
\[T:=p_{1,\dots,n-1}(P)\leq\Gamma_1\times\dots\times\Gamma_{n-1}.\]
Then $P$ is a subdirect product of $T$ and $\G_n$, so by Proposition \ref{subdfibre} 
it is the fibre product
associated to short exact sequences
\begin{alignat*}{2}
\ses{N_{1,\dots,n-1}}{&\;T&}Q\\
\ses{N_n}{&\,\G_n\,&}Q&.
\end{alignat*}
Note that $T$, in turn, is a subdirect product of $\Gamma_1,\dots,\Gamma_{n-1}$.
This allows us to argue about subdirect products of any number of groups
by using induction
on the number of factors and dealing with a fibre product at every step.

\section{The $n$-$(n+1)$-$(n+2)$ Conjecture and the Virtual Surjections Conjecture}
\label{sec:vsc}
In the Introduction we explained that one of the major motivations
for studying the $n$-$(n+1)$-$(n+2)$ Conjecture is
that it implies the Virtual Surjections Conjecture, which
relates the finiteness properties of subdirect products
to the way these groups are
embedded in the ambient direct product. The main goal of the present section
is to prove this implication. En passant, we will
observe a curious connection between subdirect 
products and nilpotent groups.

Let us restate the embedding condition from the Introduction, which we can now phrase using the notation
introduced earlier:
\begin{defn}
  Let $\G_1,\dots,\G_n$ be groups and
  let $P\leq\G_1\times\dots\times\G_n$ be a subgroup
  of their direct product. We say that $P$ virtually
  surjects to $k$-tuples of factors if
  $p_J(P)\leq\G_J$ is a finite index subgroup for
  every $k$-element subset of indices $J\subset\{1,\dots,n\}$.
\end{defn}

Recall the conjecture from the Introduction.
\begin{conj}[The Virtual Surjections Conjecture]
  Let $k\geq 2$.\label{conj:vsc}
  Let $\G_1,\dots,\G_n$ be groups of type $\Fin_k$ and
  $P\leq\G_1\times\dots\times\G_n$ a subgroup
  of their direct product.
  If $P$ virtually surjects to $k$-tuples of factors
  then $P$ is of type $\Fin_k$.
\end{conj}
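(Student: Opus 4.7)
The approach is to deduce the Virtual Surjections Conjecture from the $n$-$(n+1)$-$(n+2)$ Conjecture, in the special case where the common quotient is nilpotent. After replacing each $\G_i$ by the finite-index subgroup $p_i(P)\leq\G_i$, we may assume $P$ is subdirect. I then proceed by double induction: an outer induction on $k\geq 2$ with base case $k=2$ taken from \cite{BHMSFinPres}, and an inner induction on $n\geq k$ with trivial base case $n=k$ (where $P$ has finite index in the full direct product and is therefore of type $\Fin_k$).

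For the inductive step ($k\geq 3$, $n>k$), use Lemma \ref{subdfibre} to realise $P$ as the fibre product of $T:=p_{1,\dots,n-1}(P)$ and $\G_n$ over some common quotient $Q$. Two straightforward calculations feed the inductive hypotheses. First, $T$ is subdirect in $\G_1\times\dots\times\G_{n-1}$ and still virtually surjects to $k$-tuples (since $p_J(T)=p_J(P)$ for $J\subseteq\{1,\dots,n-1\}$), so the inner induction gives $T$ of type $\Fin_k$. Second, for $J\subseteq\{1,\dots,n-1\}$ with $|J|=k-1$ one has
\[ p_J(N_{1,\dots,n-1})\times\{1\}=p_{J\cup\{n\}}(P)\cap(\G_J\times\{1\}), \]
which has finite index in $\G_J\times\{1\}$ because $p_{J\cup\{n\}}(P)$ has finite index in $\G_J\times\G_n$; hence $N_{1,\dots,n-1}$ virtually surjects to $(k-1)$-tuples in $\G_1\times\dots\times\G_{n-1}$, and the outer induction gives $N_{1,\dots,n-1}$ of type $\Fin_{k-1}$.

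The crux is the ``curious connection'' foreshadowed at the start of Section \ref{sec:vsc}: that $Q=\G_n/N_n$ is virtually nilpotent. The argument is a commutator computation. For each $(k-1)$-subset $J\subseteq\{1,\dots,n-1\}$, virtual surjection to $k$-tuples provides a finite-index subgroup of $\G_n$ whose elements lift to $P$ with coordinates in $J$ all trivial; commuting two such lifts with complementary trivial-coordinate patterns produces an element of $P$ whose pattern of trivial coordinates is the union of the two. Iterating this at most $\lceil(n-1)/(k-1)\rceil$ times, one lands in $N_n$, so some fixed term of the lower central series of a finite-index $H\leq\G_n$ is contained in $N_n$ and $Q$ is virtually nilpotent.

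To finish, pass to the preimage $P^0\leq P$ of a nilpotent finite-index normal subgroup $Q_0\trianglelefteq Q$. After again shrinking the factors, $P^0$ remains subdirect and virtually surjects to $k$-tuples, and its fibre product decomposition has nilpotent common quotient $Q_0$; since $Q_0$ is finitely generated (being a quotient of the finitely generated group $T^0$), it is of type $\Fin_\infty\supseteq\Fin_{k+1}$. Together with the finiteness properties already established for $T^0$, $\G_n^0$ and $N_{1,\dots,n-1}^0$ (each a finite-index subgroup of the corresponding group for $P$), the $n$-$(n+1)$-$(n+2)$ Conjecture applied with parameter $k-1$, in the nilpotent case, yields $P^0$ of type $\Fin_k$, whence so is $P$. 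The main obstacle I anticipate is the commutator key lemma: one has to choose the various finite-index subgroups of $\G_n$ in which the required lifts exist, intersect them to obtain a single finite-index subgroup on which the argument runs uniformly, and verify the iterated commutator bookkeeping to guarantee that the union of trivial-coordinate patterns actually covers $\{1,\dots,n-1\}$.
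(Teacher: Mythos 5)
The statement you are asked to prove is labelled a \emph{Conjecture} in the paper, and remains open: the paper does not prove it unconditionally. What the paper proves is the implication (Theorem \ref{thm:implication}): if the $n$-$(n+1)$-$(n+2)$ Conjecture holds for (virtually) nilpotent quotients $Q$, then the Virtual Surjections Conjecture follows. Your proposal is a correct reconstruction of exactly this implication, and it takes essentially the same route as the paper: reduce to the subdirect case by replacing each $\G_i$ with $p_i(P)$; set up a double induction on $k$ and then on $n$; decompose $P$ as the fibre product of $T=p_{1,\dots,n-1}(P)$ and $\G_n$ over $Q=\G_n/N_n$; feed the two inductive hypotheses via the observation that $T$ still virtually surjects to $k$-tuples and that $N_{1,\dots,n-1}$ virtually surjects to $(k-1)$-tuples (the paper's Lemma \ref{lem:vskernel}); establish that $Q$ is virtually nilpotent by the iterated-commutator argument (the paper's Lemma \ref{lem:nilp}, proved there with exactly the intersection-of-finite-index-subgroups bookkeeping you flag as the delicate point); reduce from virtually nilpotent to nilpotent $Q$ by passing to a finite-index fibre product; and invoke the $(k-1)$-$k$-$(k+1)$ Conjecture in the nilpotent case with $Q_0$ of type $\Fin_\infty$. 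The one cosmetic departure is the base case: you start the outer induction at $k=2$ citing \cite{BHMSFinPres}, whereas the paper starts at $k=1$ (Proposition \ref{prop:vs1}) and therefore carries the virtual-nilpotency hypothesis on the quotients explicitly in the induction statement; both work, since for $k\geq 2$ Lemma \ref{lem:nilp} supplies the nilpotency automatically. You should be explicit, however, that what you have established is an implication and not the conjecture itself: for $k\geq 3$ the step invokes the $(k-1)$-$k$-$(k+1)$ Conjecture with $k-1\geq 2$, which is not known (only the $1$-$2$-$3$ case is a theorem), so the Virtual Surjections Conjecture for $k\geq 3$ is proved by this argument only under that unproven hypothesis.
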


We will now embark on the task of showing that the
$n$-$(n+1)$-$(n+2)$ Conjecture implies the Virtual Surjections Conjecture.

\subsection{Virtually Nilpotent Quotients}
\label{subsec:VirtuallyNilpotent}

As our first ingredient we need an important consequence
of the property of virtually surjecting to $k$-tuples
for some $k\geq 2$.
This lemma originated in the ``Virtually Nilpotent Quotients Theorem'' from
\cite[Section 4.3]{BridsonMiller}, see also \cite[Proposition 3.2]{BHMSFinPres}. 
We will prove a variation with a precise
bound on the nilpotency class, which we will exploit in Section \ref{sec:abelian}.

\begin{lemma}
  Let $\G_1,\dots,\G_n$ be groups and\label{lem:nilp}
  $P\leq\Gamma_1\times\dots\times\Gamma_n$ a subdirect product which
  virtually surjects to $k$-tuples for some $k\geq 2$. Then for every index $i$ 
  the group $\Gamma_i/N_i$
  is virtually nilpotent of class at most $\lceil\frac{n-1}{k-1}\rceil-1$.
  \end{lemma}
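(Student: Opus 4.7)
Fix an index $i$, and set $s:=\lceil(n-1)/(k-1)\rceil$; the task is to exhibit $\G_i/N_i$ as virtually nilpotent of class at most $s-1$. I first observe that virtually surjecting to $k$-tuples implies the same for any $m$-tuples with $m\leq k$, since for $J'\subseteq J\subseteq\{1,\dots,n\}$ the map $p_{J'}$ factors through $p_J$ and surjections preserve finite index. Then I choose a partition
\[\{1,\dots,n\}\setminus\{i\} = J_1\sqcup\dots\sqcup J_s\]
with $|J_t|\leq k-1$ for each $t$, so that each $J_t\cup\{i\}$ has at most $k$ elements; consequently $p_{J_t\cup\{i\}}(P)$ has finite index in $\G_{J_t\cup\{i\}}$ for every $t$.

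Next I establish the key lifting claim: for each $t$, the subgroup $H_t := p_i(N_{\{1,\dots,n\}\setminus J_t})\leq\G_i$ — i.e.\ the set of $i$-coordinates of elements of $P$ whose $J_t$-coordinates are all trivial — has finite index in $\G_i$. Indeed, the slice $\G_i\times\{1_{J_t}\}\leq\G_{J_t\cup\{i\}}$ embeds into the finite quotient $\G_{J_t\cup\{i\}}/p_{J_t\cup\{i\}}(P)$, and the kernel of this embedding consists precisely of those $(g_i,1_{J_t})$ that lie in $p_{J_t\cup\{i\}}(P)$; unpacking the definition, this kernel is $H_t\times\{1_{J_t}\}$. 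Hence $[\G_i:H_t]<\infty$. Set $H:=\bigcap_{t=1}^s H_t\leq\G_i$, which is still of finite index.

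Given arbitrary $g_1,\dots,g_s\in H$, choose for each $t$ a lift $\tilde g_t\in P$ with $i$-coordinate $g_t$ and trivial $J_t$-coordinates, and form the left-normed iterated commutator
\[c := [\tilde g_1,[\tilde g_2,[\dots,[\tilde g_{s-1},\tilde g_s]\dots]]]\in P.\]
Its $i$-coordinate equals the analogous iterated commutator of $g_1,\dots,g_s$ in $\G_i$. For any $j\neq i$, the partition provides a unique $t$ with $j\in J_t$; since $\tilde g_t$ has trivial $j$-coordinate, and since any iterated commutator one of whose arguments is trivial is itself trivial, the $j$-coordinate of $c$ vanishes. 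Therefore $c\in P\cap\G_i=N_i$, so the $s$-fold iterated commutator of any $s$ elements of $H$ lies in $N_i$; consequently $H/(H\cap N_i)\isom HN_i/N_i$ is a finite-index subgroup of $\G_i/N_i$ that is nilpotent of class at most $s-1$. The main subtlety, and the sole source of the stated bound, is the combinatorics of the partition: each commutator step can be arranged to kill only the $k-1$ coordinates in one $J_t$, so $\lceil(n-1)/(k-1)\rceil$ steps are both necessary and sufficient to dispatch all $n-1$ non-$i$ coordinates.
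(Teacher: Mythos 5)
Your proof is correct and follows essentially the same route as the paper's: partition the other $n-1$ indices into $s=\lceil(n-1)/(k-1)\rceil$ blocks of size at most $k-1$, use virtual surjection onto the corresponding $k$-tuples to obtain a finite-index subgroup of $\G_i$ whose elements admit lifts in $P$ with trivial coordinates on any prescribed block, and then note that an $s$-fold iterated commutator of such lifts vanishes in every non-$i$ coordinate and so lies in $N_i$. One small wording issue: the slice $\G_i\times\{1_{J_t}\}$ does not ``embed'' into the finite coset space $\G_{J_t\cup\{i\}}/p_{J_t\cup\{i\}}(P)$ and the latter need not be a group, so the word ``kernel'' is informal; what you actually use (correctly) is that the preimage of the trivial coset is $H_t\times\{1_{J_t}\}$ and the fibres are its cosets, giving $[\G_i:H_t]<\infty$.
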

\begin{proof}
  We will show the statement only for $i=1$. For other choices of $i$ the proof is 
  entirely analogous (or alternatively, can be reduced to the case treated here, 
  using the isomorphism exchanging the first and $i$\textsuperscript{th} factor of the product).

  Let $s:=\lceil\frac{n-1}{k-1}\rceil$. Then the set $I:=\{2,\dots,n\}$ can be partitioned into $s$ disjoint
  subsets $I_1,\dots,I_s\subset I$ of size $\abs{I_m}\leq k-1$ for all $m$. Set $I_m':=\{1\}\cup I_m$.
  Since $P$ virtually surjects to $k$-tuples, $p_{I_m'}(P)$ is of finite index in $\Gamma_{I_m'}$ for
  all $m$. Therefore $\Gamma_1':=\bigcap_{m=1}^s (p_{I_m'}(P)\cap\Gamma_1)$ is of finite index
  in $\Gamma_1$. Note that $\Gamma_1'$ consists of all those $\gamma\in\Gamma_1$ such that 
  for each $m$ with $1\leq m\leq s$ there exists a $g\in P$ with
  $p_1(g)=\gamma$ and $p_i(g)=1$ for all $i\in I_m$. In particular $\Gamma_1'$ contains
  $N_1$.

  Now let $\gamma_1,\dots,\gamma_s\in\Gamma_1'$. We will show that the iterated commutator
  \[c:=[\gamma_1,[\gamma_2,\dots,[\gamma_{s-1},\gamma_s]\dots]]\]
  is in $N_1$.
  Pick elements $g_1,\dots,g_s\in P$ such that $p_1(g_m)=\gamma_m$ for $1\leq m\leq s$ 
  and $p_i(g_m)=1$ for $i\in I_m$.
  Now let
  \[c':=[g_1,[g_2,\dots,[g_{s-1},g_s]\dots]].\]
  Then $c'$ is in $P$ and $p_1(c')=c$. Furthermore, for each $i$ with $2\leq i\leq n$ there
  is an $m$ with $1\leq m\leq s$ such that $i\in I_m$, so $p_i(g_m)=1$. But clearly, if one of the elements in an iterated commutator
  equals $1$, then the commutator itself equals $1$, too. So for each $i$ with $2\leq i\leq n$, we get that
  $p_i(c')=1$. But this means that $c=c'\in\Gamma_1\cap P=N_1$.

  We have shown that $N_1$ contains all the commutators of length $s$ in $\Gamma_1'$
  and therefore it also contains $\gamma_{s-1}\Gamma_1'$, the $(s-1)$\textsuperscript{th}
  term in the lower central series of $\Gamma_1'$. This proves that $\Gamma_1'/N_1$
  is nilpotent of class at most $s-1$.
\end{proof}

In particular we record the following generalisation of
\cite[Lemma 4.9]{BridsonMiller}.

\begin{cor}
  Let $\G_1,\dots,\G_n$ be groups and let
  $P\leq \G_1\times\dots\times\G_n$ be a subgroup
  of their direct product which virtually surjects\label{cor:virtabelian}
  to $k$-tuples for some $k>\frac{n}{2}$.
  Then $p_i(P)/N_i$ is virtually abelian for all $i$.
  
  Furthermore, $P$ is virtually the kernel of a homomorphism to
  an abelian group. Precisely: There are finite index subgroups
  $\G_i^0\leq\G_i$ for each $i$ and a homomorphism
  $\phi:\G_1^0\times\dots\times\G_n^0\to A$ to an abelian group $A$,
  such that $P\cap(\G_1^0\times\dots\times\G_n^0)=\ker\phi$.
\end{cor}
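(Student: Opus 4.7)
The plan is to apply Lemma \ref{lem:nilp} to bound the nilpotency class of the quotients $p_i(P)/N_i$, observe that $k>\tfrac{n}{2}$ drives this class down to $1$ so each quotient is virtually abelian, and then package the individual virtually abelian quotients into a single homomorphism to an abelian group whose kernel meets $P$ in a finite-index subgroup.

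First, I would reduce to a subdirect setup. The hypothesis $k>\tfrac{n}{2}$ gives $k\geq 2$ as soon as $n\geq 2$ (the case $n=1$ is trivial), and in particular $P$ virtually surjects to singletons, so each $p_i(P)$ has finite index in $\G_i$. Viewing $P$ as a subdirect product of $p_1(P),\dots,p_n(P)$ --- which still virtually surjects to $k$-tuples inside this smaller ambient product --- Lemma \ref{lem:nilp} shows that $p_i(P)/N_i$ is virtually nilpotent of class at most $\lceil\frac{n-1}{k-1}\rceil-1$. A brief computation shows $k>\tfrac{n}{2}$ implies $2(k-1)\geq n-1$, so this class is at most $1$; this gives the first claim.

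For the second claim, for each $i$ choose a finite-index abelian subgroup $A_i\leq p_i(P)/N_i$ and let $\G_i^0$ be its preimage in $p_i(P)$. Then $\G_i^0$ has finite index in $\G_i$, contains $N_i$, and, since $N_i$ is normal in $P$ by the fibre-product structure from Lemma \ref{subdfibre} and hence in $p_i(P)$, it fits into a quotient $\G_i^0\surjarrow A_i$. Taking the coordinate-wise product gives
\[ q\colon \G_1^0\times\dots\times\G_n^0\surjarrow A_1\times\dots\times A_n, \]
a homomorphism with kernel $N_1\times\dots\times N_n$, and this kernel lies inside $P^0:=P\cap(\G_1^0\times\dots\times\G_n^0)$ because each $N_i$ lies in $P$ under $\G_i\injarrow\G_1\times\dots\times\G_n$. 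Setting $B:=q(P^0)$ and $A:=(A_1\times\dots\times A_n)/B$, the composition $\phi$ of $q$ with the quotient $A_1\times\dots\times A_n\surjarrow A$ is a homomorphism to an abelian group, and $\ker\phi=q^{-1}(B)=P^0$, the last equality because $P^0\supseteq\ker q$.

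The substantive content is entirely contained in Lemma \ref{lem:nilp}; beyond that, the only steps needing attention are the arithmetic showing $\lceil\frac{n-1}{k-1}\rceil\leq 2$ from $k>\tfrac{n}{2}$ and the bookkeeping to confirm that $P^0\supseteq\ker q$, so that $P^0$ is the full preimage of its image under $q$. Both are routine.
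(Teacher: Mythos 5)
Your proposal is correct and takes essentially the same route as the paper: both reduce to the subdirect case, apply Lemma \ref{lem:nilp} with the arithmetic observation that $k>\tfrac{n}{2}$ forces $\lceil\tfrac{n-1}{k-1}\rceil\leq 2$, and then pass to finite-index subgroups $\G_i^0$ with $\G_i^0/N_i$ abelian. The only cosmetic difference is in the final step: the paper observes directly that $P\cap(\G_1^0\times\dots\times\G_n^0)$ contains the commutator subgroup of $\G_1^0\times\dots\times\G_n^0$ (since it contains $N_1\times\dots\times N_n$), hence is normal with abelian quotient, whereas you build the map $\phi$ explicitly by composing $q$ with a further quotient --- the two are logically interchangeable.
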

\begin{proof}
  Let $\G_i':=p_i(P)\leq\G_i$. These are finite index subgroups,
  since $P$ virtually surjects to $k$-tuples. Then $P$ is a subdirect product
  in $\G_1'\times\dots\times\G_n'$, which still virtually surjects to $k$-tuples.
  Now, since $k>\frac n2$, we have $k\geq \frac n2+\frac 12$ and so
  $\frac{n-1}{k-1}\leq 2$. So Lemma \ref{lem:nilp} says that 
  $\G_i'/(P\cap\G_i')=\G_i'/N_i$ is virtually abelian for all $i$.
  
  Hence there are finite index subgroups $\G_i^0\leq \G_i'$, containing $N_i$,
  for all $i$, such that $\G_i^0/N_i$ is abelian. Then $N_i$ contains
  the commutator subgroup $[\G_i^0,\G_i^0]$. Therefore
  $N_1\times\dots\times N_n$ contains the commutator subgroup
  of $\G_1^0\times\dots\times\G_n^0$. But then 
  $P\cap(\G_1^0\times\dots\times\G_n^0)$ contains
  this commutator subgroup as well, and in particular it is
  normal in $\G_1^0\times\dots\times\G_n^0$ with
  abelian quotient group.
\end{proof}

Corollary \ref{cor:virtabelian} is particularly interesting
in light of the following theorem by Koch\-lou\-kova \cite[Theorem 11]{Kochloukova}:
\begin{thm}
  If $\G_1,\dots,\G_n$ are non-abelian free (or limit) groups
  and $P\leq\G_1\times\dots\times\G_n$ is a full subdirect product
  of type $\Fin_k$ then $P$ virtually surjects to $k$-tuples.\qed\label{thm:KochloukovasTheorem}
\end{thm}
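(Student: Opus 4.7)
My plan is to proceed by induction on the number of factors $n$. The base case $n = k$ amounts to showing that $P$ has finite index in $\G_1 \times \cdots \times \G_k$, which is exactly the content of the Bridson--Howie--Miller--Short theorem quoted in the introduction (\cite{BHMSLimitGroups}, Theorem C): under the hypothesis that $P$ is a full subdirect product of type $\Fin_k$ in a product of $k$ non-abelian limit groups, the subgroup $N_1 \times \cdots \times N_k$ has finite index in $P$. Subdirectness then forces each $N_i$ to have finite index in $\G_i$ (projecting a transversal to each factor), so $P$ itself has finite index in the ambient product.

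For the inductive step ($n > k$), fix a $k$-subset $J \subseteq \{1, \ldots, n\}$ and pick any index $m \notin J$, which is possible since $|J| = k < n$. Set $T := p_{\{1,\ldots,n\} \setminus \{m\}}(P)$. Then $T$ is a full subdirect product of the $n-1$ non-abelian limit groups $\G_i$, $i \neq m$: subdirectness is inherited from $P$, and the image of $P \cap \G_i$ in $T$ remains non-trivial because $\G_i$ and $\G_m$ meet trivially inside the ambient product. Since $p_J : P \to \G_J$ factors through $T$, if $T$ is of type $\Fin_k$ then the inductive hypothesis applied to $T$ gives that $p_J(T) = p_J(P)$ has finite index in $\G_J$; as $J$ was arbitrary, the induction would then be complete.

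The main obstacle is to verify that $T \isom P / N_m$ is of type $\Fin_k$, so that the induction can proceed. We have the short exact sequence
\[
1 \to N_m \to P \to T \to 1
\]
with $P$ of type $\Fin_k$, and (by a direct check using subdirectness of $P$) $N_m$ is a non-trivial normal subgroup of the non-abelian limit group $\G_m$. Abstract group theory alone does not transfer $\Fin_k$ from $P$ to $T$ across such an extension, so the limit-group hypothesis must enter essentially here. The natural tool is the system of Bieri--Neumann--Strebel--Renz $\Sigma$-invariants: one would combine the product formula for the $\Sigma$-invariants of a direct product with the strong restrictions known to hold for $\Sigma^k$ of a non-abelian limit group to deduce that the finiteness witnessed by $P$ (which, as a subgroup cut out by projection conditions, corresponds to specific character data on $\G_1 \times \cdots \times \G_n$) descends to the required $\Fin_k$ property of the quotient $T$. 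This transfer step, rather than the inductive bookkeeping, is where the real content of the theorem lies.
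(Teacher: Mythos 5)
The paper does not supply a proof of this statement: it is a cited result, namely Kochloukova's \cite[Theorem~11]{Kochloukova} (which is in fact stated there under the weaker hypothesis $\wFP_k$, from which the $\Fin_k$ version quoted here follows immediately since $\Fin_k\Rightarrow\wFP_k$), and the \qed mark records that the proof is omitted. So there is no in-paper argument to compare against; the question is only whether your proposal would actually prove the theorem.

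Your base case $n=k$ is correct, but the inductive step has a genuine gap that you acknowledge yet do not close, and I do not believe it can be closed along the lines you sketch. You need $T=P/N_m$ to be of type $\Fin_k$ in order to apply the inductive hypothesis to $T$. The kernel $N_m=P\cap\G_m$ is a non-trivial normal subgroup of the non-abelian limit group $\G_m$ (normality follows from $P$ being subdirect); whenever $N_m$ has infinite index in $\G_m$ it is typically not even finitely generated (for $\G_m$ free, a non-trivial infinite-index normal subgroup is free of infinite rank), so the standard extension lemma that passes $\Fin_k$ from a group to a quotient, which requires the kernel to be of type $\Fin_{k-1}$, is unavailable. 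Worse, the assertion ``$T$ is of type $\Fin_k$'' is essentially the Virtual Surjections Conjecture applied to $T$ --- the open conjecture that this theorem is meant to complement --- so your induction reduces the statement to a step at least as hard as the statement itself. Your instinct that the $\Sigma$-invariants of non-abelian limit groups must enter is correct, and that is indeed the technology in Kochloukova's proof; but she deploys it in a direct argument using spectral sequences and the structure of the quotients $\G_i/N_i$, not to salvage a reduction to fewer factors. As written, your proposal does not constitute a proof.
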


We can thus conclude:
\begin{cor}
  If $\G_1,\dots,\G_n$ are non-abelian free (or limit) groups
  and $P\leq\G_1\times\dots\times\G_n$ is a full subdirect product
  of type $\Fin_k$ with $k>\frac{n}{2}$ then $P$ is virtually
  the kernel of a map to an abelian group (in the same sense
  as in the statement of Corollary \ref{cor:virtabelian}).\qed
\end{cor}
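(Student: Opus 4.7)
The statement is a direct concatenation of two results already in hand, so the proof is essentially a two-line deduction and the plan reflects that. The goal is to bridge the hypothesis of type $\Fin_k$ to the conclusion about being virtually the kernel of an abelian map, using the embedding condition (virtually surjecting to $k$-tuples) as the intermediary.

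The plan is as follows. First, I would invoke Kochloukova's Theorem (Theorem \ref{thm:KochloukovasTheorem}) directly: since the $\G_i$ are non-abelian free or limit groups, $P$ is a full subdirect product, and $P$ is of type $\Fin_k$, we conclude that $P$ virtually surjects to $k$-tuples of factors. Next, since $k > n/2$ by hypothesis, the numerical condition of Corollary \ref{cor:virtabelian} is satisfied, so applying that corollary produces finite-index subgroups $\G_i^0 \leq \G_i$ and a homomorphism $\phi : \G_1^0 \times \dots \times \G_n^0 \to A$ to an abelian group $A$ with $P \cap (\G_1^0 \times \dots \times \G_n^0) = \ker \phi$. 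Since $\G_1^0 \times \dots \times \G_n^0$ has finite index in $\G_1 \times \dots \times \G_n$, the intersection $P \cap (\G_1^0 \times \dots \times \G_n^0)$ has finite index in $P$, giving precisely the claimed conclusion in the sense of Corollary \ref{cor:virtabelian}.

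There is no real obstacle here: all the work has been done in establishing Kochloukova's Theorem (cited) and in Corollary \ref{cor:virtabelian}, which in turn rests on Lemma \ref{lem:nilp} with the sharp bound $\lceil (n-1)/(k-1) \rceil - 1 \leq 1$ forced by $k > n/2$. The only thing worth being careful about is matching hypotheses — specifically, noting that $P$ being a full subdirect product (i.e.\ $P \cap \G_i \neq 1$ for all $i$) is required to apply Theorem \ref{thm:KochloukovasTheorem}, and that it is exactly this assumption which appears in the statement of the corollary.
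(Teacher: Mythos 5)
Your proposal is correct and matches the paper's own reasoning exactly: the paper places the corollary immediately after Theorem \ref{thm:KochloukovasTheorem} and Corollary \ref{cor:virtabelian} with the connective ``We can thus conclude'' and a bare \qed, indicating precisely the two-step deduction you describe (Kochloukova gives virtual surjection to $k$-tuples, then $k>n/2$ triggers Corollary \ref{cor:virtabelian}). Your remark about the fullness hypothesis being exactly what Kochloukova's theorem requires is apt and accurately identifies the only point where care is needed.
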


The subgroups which are virtually kernels of maps to abelian 
groups are called ``of Stallings-Bieri type'' in \cite{BridsonMiller},
alluding to the examples in \cite{Stallings} and \cite{BieriQMN}
of groups of type $\Fin_{n-1}$ with non-finitely generated
$n$\textsuperscript{th} integral homology group, which are
kernels of maps from a direct product of $n$ finitely generated
free groups to $\Z$. For a while,
it was an open question whether every finitely presented
full subdirect product of free groups is of this type
\cite[Question 4.3]{BridsonMiller}, until examples
proving that this is not the case were discovered
in \cite[Theorem H]{BHMSFinPres2} (or see \cite[Theorem H]{BHMSFinPres}).

We will have more to say on these kinds of subgroups
in Chapter \ref{sec:abelian} (in particular we will prove the
Virtual Surjections Conjecture for them), but let us for now get back to
the matter at hand.

\subsection{The $0$-$1$-$2$ Lemma and virtual surjection to factors}

Next, we will investigate the case $k=1$ of the Virtual Surjections Conjecture.
We have excluded this case from the
Conjecture with good reason: It is not true for $k=1$.
To see this, note that Bridson and Miller show in \cite[Corollary 2.4]{BridsonMiller}
that a subdirect product $P\leq\G_1\times\G_2$ 
of two finitely presented groups $\G_1$ and $\G_2$ is finitely
generated if and only if $\G_1/(P\cap \G_1)\isom \G_2/(P\cap\G_2)$
is finitely presented. So, choosing any finitely generated but not finitely
presented group $Q$ and an epimorphism $\pi:\G\surjarrow Q$ where
$\G$ is a finitely generated free group, the symmetric fibre product
associated to $\pi$ will not be finitely generated.

We can, however, make the $k=1$ case of the Virtual
Surjections Conjecture become true, if we add the extra assumption that
the $p_i(P)/N_i$ be virtually nilpotent for all $i$ --- as they are
when $k\geq 2$ by Lemma \ref{lem:nilp}.

To show this, we will need the $0$-$1$-$2$ Conjecture.
This is really no more than an easy lemma and we omit the proof,
which can be found in \cite[Proposition 2.3 and Corollary 2.4]{BridsonMiller}.

\begin{lemma}[The $0$-$1$-$2$ Lemma]
  Let
  \begin{gather*}
    \ses[\pi_1]{N_1}{\G_1}Q\\
    \ses[\pi_2]{N_2}{\G_2}Q
  \end{gather*}
  be two short exact sequences with $\G_1$ and $\G_2$ finitely generated
  and $Q$ finitely presented. Then the associated fibre product
  is finitely generated.\label{lem:012}\qed
\end{lemma}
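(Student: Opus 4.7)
The plan is to write down an explicit finite generating set for $P$ and verify that it works. Fix a finite presentation $Q=\pres{x_1,\dots,x_n}{r_1,\dots,r_m}$ and choose lifts $a_i\in\G_1$, $b_i\in\G_2$ of the $x_i$ (so $\pi_1(a_i)=\pi_2(b_i)=x_i$). Extend these to finite generating sets $\{a_\nu\}\cup\{c_\mu\}$ of $\G_1$ and $\{b_\nu\}\cup\{d_\lambda\}$ of $\G_2$; after multiplying each extra generator by a word in the $a_\nu$'s (resp.\ $b_\nu$'s) representing its image in $Q$, we may assume $c_\mu\in N_1$ and $d_\lambda\in N_2$.

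Consider the finite set $T\subset P$ consisting of the diagonal pairs $(a_\nu,b_\nu)$, the kernel pairs $(c_\mu,1)$ and $(1,d_\lambda)$, and the ``relator lifts'' $(r_j(a),1)$, which lie in $P$ because $\pi_1(r_j(a))=r_j(x)=1$ in $Q$. I claim $\sgp{T}=P$. Given $(\g_1,\g_2)\in P$, write $\g_2=u(b,d)$ as a word in the chosen generators of $\G_2$; substituting $(a_\nu,b_\nu)$ for $b_\nu$ and $(1,d_\lambda)$ for $d_\lambda$ in $u$ yields an element $(v,\g_2)\in\sgp{T}$ with $v:=u(a,1)\in\G_1$. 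Since $\pi_1(v)=u(x,1)=\pi_2(\g_2)=\pi_1(\g_1)$, the element $\eta:=\g_1 v^{-1}$ lies in $N_1$ and $(\g_1,\g_2)=(\eta,1)(v,\g_2)$. So it remains to show $(\eta,1)\in\sgp{T}$ for every $\eta\in N_1$.

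This is where finite presentability of $Q$ enters decisively: $N_1$ is the normal closure in $\G_1$ of the \emph{finite} set $F:=\{c_\mu\}\cup\{r_j(a)\}$, because quotienting $\G_1$ by the normal closure of $F$ imposes on its generators $\{a_\nu,c_\mu\}$ precisely the defining relations of $Q$, yielding $Q$ itself. Writing $\eta=\prod_i g_i f_i g_i^{-1}$ with $f_i\in F^{\pm 1}$ and $g_i\in\G_1$, I express each $g_i$ as a word in $\{a_\nu,c_\mu\}$ and substitute $(a_\nu,b_\nu)$ for $a_\nu$ and $(c_\mu,1)$ for $c_\mu$, producing an element $(g_i,h_i)\in\sgp{T}$ for some $h_i\in\G_2$. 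Then $(g_i,h_i)(f_i,1)(g_i,h_i)^{-1}=(g_i f_i g_i^{-1},1)\in\sgp{T}$, and multiplying these realises $(\eta,1)$ inside $\sgp{T}$. The only real substance in the argument is the finite normal generation of $N_1$ in $\G_1$; everything else is bookkeeping that turns this into a finite generating set for the fibre product.
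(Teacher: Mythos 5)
Your argument is correct. Note that the paper itself omits the proof, citing Bridson--Miller (Proposition~2.3 and Corollary~2.4); your proof is a valid self-contained reconstruction of that standard argument.

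A brief sanity check of the two places where care is needed. First, the normalisation making $c_\mu\in N_1$ (and $d_\lambda\in N_2$): replacing $c'_\mu$ by $c_\mu:=c'_\mu\,w_\mu(a)^{-1}$, where $w_\mu(x)=\pi_1(c'_\mu)$, indeed keeps $\{a_\nu,c_\mu\}$ a generating set of $\G_1$ and pushes $c_\mu$ into $\ker\pi_1$; this is fine. Second, the key assertion that $N_1=\ker\pi_1$ is the normal closure of the finite set $F=\{c_\mu\}\cup\{r_j(a)\}$: the normal closure is contained in $N_1$ since $\pi_1$ kills $F$, and conversely the map $\rho:Q\to\G_1/\langle\!\langle F\rangle\!\rangle$, $x_i\mapsto\bar a_i$, is well-defined (the $r_j$ die), surjective (the $c_\mu$ die, so the $\bar a_\nu$ generate), and a section of the induced map $\G_1/\langle\!\langle F\rangle\!\rangle\to Q$, forcing both to be isomorphisms. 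This is exactly where finite presentability of $Q$ is used, as you identify. The remaining bookkeeping — lifting words to $\langle T\rangle$ with controlled first coordinate, and conjugating the generators of $F$ — is carried out correctly.
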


The ``$k=1$ case'' of the
Virtual Surjections Conjecture is as follows:
\begin{prop}
  Let $\G_1,\dots,\G_n$ be finitely \label{prop:vs1} generated groups and
  $P\leq\G_1\times\dots\times\G_n$ a subgroup of their direct product.
  If the image $p_i(P)\leq\G_i$ under the projection to each factor 
  $\G_i$ is of finite index in $\G_i$ and $p_i(P)/(P\cap\G_i)$ is virtually nilpotent for all $i$
  then $P$ is finitely generated.
\end{prop}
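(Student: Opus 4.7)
The plan is to proceed by induction on $n$, with the $0$-$1$-$2$ Lemma doing the heavy lifting at each step.

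First, I would reduce to the subdirect case. Setting $\G_i':=p_i(P)$, each $\G_i'$ is of finite index in the finitely generated group $\G_i$, hence is itself finitely generated, and $P$ is a subdirect product in $\G_1'\times\dots\times\G_n'$. Also $P\cap\G_i'=P\cap\G_i=N_i$, so the hypothesis that $p_i(P)/N_i=\G_i'/N_i$ is virtually nilpotent carries over unchanged. We may therefore assume from the outset that $P$ is subdirect in $\G_1\times\dots\times\G_n$.

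The base case $n=1$ is immediate: $P=\G_1$. For the inductive step, set $T:=p_{1,\dots,n-1}(P)\leq\G_1\times\dots\times\G_{n-1}$. Then $T$ is a subdirect product of the finitely generated groups $\G_1,\dots,\G_{n-1}$. To apply induction to $T$, I need to verify that $\G_i/(T\cap\G_i)$ is virtually nilpotent for each $i\leq n-1$. This is the one place where one has to be a bit careful: under the identification of $\G_1\times\dots\times\G_{n-1}$ with $\G_1\times\dots\times\G_{n-1}\times 1\leq\G_1\times\dots\times\G_n$, any element $(1,\dots,\g_i,\dots,1,1)\in P$ projects to $(1,\dots,\g_i,\dots,1)\in T$, so $N_i=P\cap\G_i\subseteq T\cap\G_i$. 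Consequently $\G_i/(T\cap\G_i)$ is a quotient of the virtually nilpotent group $\G_i/N_i$ and hence virtually nilpotent. The inductive hypothesis now yields that $T$ is finitely generated.

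Now view $P$ as a subdirect product in $T\times\G_n$. By Lemma \ref{subdfibre}, $P$ is the fibre product associated to two short exact sequences
\[\ses{N_{1,\dots,n-1}}{T}{Q}\quad\text{and}\quad\ses{N_n}{\G_n}{Q},\]
and in particular $Q\isom\G_n/N_n$. By hypothesis this quotient is virtually nilpotent; since it is also finitely generated (being a quotient of $\G_n$), it is virtually polycyclic and therefore finitely presented. As $T$ and $\G_n$ are both finitely generated, the $0$-$1$-$2$ Lemma (Lemma \ref{lem:012}) applies and produces that $P$ is finitely generated, closing the induction.

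The only non-routine obstacle is ensuring the virtually nilpotent quotient hypothesis passes down to $T$ along the projection to the first $n-1$ factors; everything else is then a direct application of the $0$-$1$-$2$ Lemma together with the standard fact that finitely generated virtually nilpotent groups are finitely presented.
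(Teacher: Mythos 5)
Your proposal is correct and follows essentially the same route as the paper's proof: reduce to the subdirect case, induct on $n$ via $T=p_{1,\dots,n-1}(P)$, check the virtually nilpotent quotient hypothesis descends to $T$ because $N_i\subseteq T\cap\G_i$, and finish with the $0$-$1$-$2$ Lemma using that the finitely generated virtually nilpotent $Q\isom\G_n/N_n$ is finitely presented. The only cosmetic difference is that you justify finite presentability of $Q$ via virtual polycyclicity, which the paper takes for granted.
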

\begin{proof}
  Note first that all the $\G_i':=p_i(P)$ are finitely generated, since
  they are of finite index in $\G_i$, and that
  $p_i(P)/(P\cap\G_i')=p_i(P)/(P\cap\G_i)$.
  Thus, replacing all the $\G_i$ by $\G_i'$, we may assume without loss
  of generality that $P$ is a subdirect product.
  
  We now prove the claim by induction on $n$. If $n=1$, there is nothing
  to prove. So assume $n\geq 2$. Let
  $T:=p_{1,\dots,n-1}(P)$. As described at the end of Chapter \ref{sec:fibreproducts}, $P$ is the fibre product
  associated to the short exact sequences
  \begin{alignat*}{2}
  \ses{N_{1,\dots,n-1}}{&\,T&}Q\\
  \ses{N_n}{&\,\G_n\,&}Q&.
  \end{alignat*}
  Here, $T$ is a subdirect product of
  $\G_1,\dots,\G_{n-1}$. Note that
  $T\cap\G_i$ for $1\leq i\leq n-1$ contains
  $P\cap\G_i$, therefore $\G_i/(T\cap\G_i)$ is a quotient
  of the virtually nilpotent group $\G_i/(P\cap\G_i)$,
  and hence virtually nilpotent itself.
  Thus we can conclude by induction that $T$ is finitely generated.
  By assumption, $Q\isom\G_n/N_n$ is finitely generated and virtually nilpotent
  and therefore finitely presented.
  Hence we can apply the $0$-$1$-$2$ Lemma to conclude that $P$ is finitely generated.
\end{proof}
\begin{remark}
  Note that the nilpotency condition
  on the groups $\G_i/N_i$ is only used in the penultimate sentence of the proof,
  to establish that certain
  finitely generated groups are finitely presented.
  Going through the proof carefully, we can see that the condition
  that $\G_i/N_i$ be virtually nilpotent can be replaced with the much
  weaker condition that $\G_i/(p_J(P)\cap\G_i)$ be finitely
  presented for all $i$ and all $J\subset\{1,\dots,n\}$
  with $i\in J$. In particular, this holds if all quotients
  of $\G_i/N_i$ are finitely presented. For finitely presented
  groups this condition is sometimes called
  Max-$n$, as the property of having only finitely presented quotients
  is equivalent to satisfying the ``maximal condition for normal subgroups''
  (i.e.\ any ascending chain of normal subgroups is eventually constant).
  Other examples of finitely presented Max-$n$ groups --- apart from the
  finitely generated virtually nilpotent groups --- are virtually polycyclic
  groups and finitely presented metabelian groups \cite{BieriStrebel} (though
  there are finitely presented soluble groups which are not Max-$n$ \cite{Abels}).
\end{remark}

\subsection{The $n$-$(n+1)$-$(n+2)$ Conjecture implies the Virtual Surjections Conjecture}
\label{subsec:implies}

Before we finally show the main result of this chapter, we need two quick lemmas.

\begin{lemma}
  Let $P\leq\G_1\times\dots\times\G_n$. Let
  $I,J\subset\{1,\dots,n\}$ with $I\cup J=\{1,\dots,n\}$
  and $I\cap J\neq\emptyset$. Then
  $p_{I\cap J}(P\cap\G_I)=p_J(P)\cap\G_{I\cap J}$.
\end{lemma}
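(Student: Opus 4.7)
The plan is to prove both set inclusions directly from the definitions. The whole content of the lemma is the observation that the hypothesis $I\cup J=\{1,\dots,n\}$ forces $I^c\subseteq J$ (where $I^c$ denotes the complement of $I$ in $\{1,\dots,n\}$), and therefore $J\setminus(I\cap J)=J\cap I^c=I^c$. Consequently, for any $g\in\G_1\times\dots\times\G_n$, the coordinates of $g$ lying outside $I$ are exactly the coordinates visible in $p_J(g)$ at the indices in $J\setminus(I\cap J)$.

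For the inclusion $p_{I\cap J}(P\cap\G_I)\subseteq p_J(P)\cap\G_{I\cap J}$, I would take $g\in P\cap\G_I$. Its coordinates outside $I$ vanish, so in particular its coordinates at the indices in $J\setminus(I\cap J)$ vanish, meaning $p_J(g)\in\G_{I\cap J}$. Under the identification of $\G_{I\cap J}$ with a subgroup of $\G_J$, the element $p_J(g)$ coincides with $p_{I\cap J}(g)$, which therefore lies in $p_J(P)\cap\G_{I\cap J}$.

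For the reverse inclusion, given $x\in p_J(P)\cap\G_{I\cap J}$, I would choose $g\in P$ with $p_J(g)=x$. Since $x\in\G_{I\cap J}$, the coordinates of $g$ at indices in $J\setminus(I\cap J)=I^c$ are trivial, so $g\in\G_I$ and hence $g\in P\cap\G_I$; by construction $p_{I\cap J}(g)=x$. There is no substantive obstacle here: once the index-set identity $J\setminus(I\cap J)=I^c$ is in hand, the rest is routine bookkeeping with projections, and the assumption $I\cap J\neq\emptyset$ simply rules out the degenerate case in which both sides collapse to the trivial group.
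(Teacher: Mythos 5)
Your proof is correct and follows essentially the same route as the paper's: both directions are proved by direct element-chasing, with the key point in each case being that $I\cup J=\{1,\dots,n\}$ forces the coordinates of an element outside $I$ to be visible in its projection to $\G_J$ (you make this explicit via the identity $J\setminus(I\cap J)=I^c$, while the paper phrases it as $p_{I\cap J}(p)=p_{I\cap J}(p_J(p))$ together with $p_J(p)\in\G_{I\cap J}$ implying $p\in\G_I$). No gaps; nothing further to add.
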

\begin{proof}
  Let $\g\in p_{I\cap J}(P\cap\G_I)$. Pick
  $p\in P\cap\G_I$ with $p_{I\cap J}(p)=\g$.
  Now $p_J(p)\in \G_{I\cap J}$, therefore
  $\g=p_{I\cap J}(p)=p_{I\cap J}(p_J(p))=p_J(p)$.
  And so $\g\in p_J(P)\cap\G_{I\cap J}$.
  
  Conversely, let $\g\in p_J(P)\cap\G_{I\cap J}$.
  Pick a $p\in P$ with $p_J(p)=\g$. Note that
  $p_{I\cap J}(p)=p_{I\cap J}(p_J(p))=p_{I\cap J}(\g)=\g$.
  But $p_J(p)\in\G_{I\cap J}$, so $p\in\G_I$ and hence
  $\g\in p_{I\cap J}(P\cap\G_I)$.
\end{proof}

\begin{lemma}
  Let $P\leq\G_1\times\dots\times\G_n$. If $P$ virtually\label{lem:vskernel}
  surjects to $k$-tuples (for some $k\geq 2$) then
  $N_{1,\dots,n-1}=P\cap(\G_1\times\dots\times\G_{n-1})$ virtually surjects to 
  $(k-1)$-tuples.
\end{lemma}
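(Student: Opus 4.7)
The plan is to reduce the claim directly to the preceding lemma together with a general finite-index fact. Fix a $(k-1)$-element subset $I\subseteq\{1,\dots,n-1\}$; we must show that $p_I(N_{1,\dots,n-1})\leq\G_I$ has finite index. Set $J:=I\cup\{n\}$, a $k$-element subset of $\{1,\dots,n\}$, and let $I':=\{1,\dots,n-1\}$, so that $I'\cup J=\{1,\dots,n\}$ and $I'\cap J=I$. Then the preceding lemma applies and yields
\[
p_I\bigl(N_{1,\dots,n-1}\bigr)=p_I(P\cap\G_{I'})=p_J(P)\cap\G_I.
\]

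Next I would record the following elementary fact: if $H\leq A\times B$ has finite index, then $H\cap A$ has finite index in $A$ (identifying $A$ with $A\times\{1\}$). This is immediate from the observation that the map $A\to(A\times B)/H$, $a\mapsto(a,1)H$, has fibres equal to the cosets of $H\cap A$ in $A$, so $[A:H\cap A]\leq[A\times B:H]<\infty$.

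With these two facts in hand, the proof closes as follows. By hypothesis $P$ virtually surjects to $k$-tuples, so $p_J(P)$ has finite index in $\G_J=\G_I\times\G_n$. Applying the elementary fact with $A=\G_I$ and $B=\G_n$ shows that $p_J(P)\cap\G_I$ has finite index in $\G_I$. Combined with the identification from the preceding lemma, this gives that $p_I(N_{1,\dots,n-1})$ has finite index in $\G_I$. Since $I$ was an arbitrary $(k-1)$-subset of $\{1,\dots,n-1\}$, this is precisely the statement that $N_{1,\dots,n-1}$ virtually surjects to $(k-1)$-tuples.

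There is no real obstacle here; the content of the argument is bookkeeping about projections, and the only non-formal input is the observation that finite index is preserved under intersection with a direct factor. The role of the assumption $k\geq 2$ is simply to ensure that $J=I\cup\{n\}$ is nonempty and that $I\subseteq\{1,\dots,n-1\}$, so that the preceding lemma applies with $I'\cap J=I\neq\emptyset$.
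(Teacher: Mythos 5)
Your proof is correct and is essentially the paper's own argument: both apply the preceding lemma with $I'=\{1,\dots,n-1\}$ and $J=I\cup\{n\}$ to identify $p_I(N_{1,\dots,n-1})$ with $p_J(P)\cap\G_I$, then conclude by intersecting a finite-index subgroup with a direct factor. The only difference is that you spell out the elementary finite-index fact that the paper leaves implicit.
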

\begin{proof}
  Let $J'\subset\{1,\dots,n-1\}$ be some $(k-1)$-element subset.
  Apply the preceding lemma with $I:=\{1,\dots,n-1\}$ and
  $J:=J'\cup\{n\}$ to conclude that
  $p_{J'}(N_{1,\dots,n-1})=p_J(P)\cap\G_{J'}$. Note that
  $p_J(P)$ has finite index in $\G_J$, so $p_J(P)\cap\G_{J'}$
  has finite index in $\G_{J'}$.
\end{proof}

\begin{thm}
  Assume that the $n$-$(n+1)$-$(n+2)$ Conjecture holds
  whenever the quotient $Q$ is virtually nilpotent.
  \label{thm:implication}
  
  Then the Virtual Surjections Conjecture is true.
\end{thm}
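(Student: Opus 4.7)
My plan is to induct on $n$, using an outer induction on $k$, and at each step to apply the $(k-1)$-$k$-$(k+1)$-Conjecture (in its virtually nilpotent case, which we are assuming) to the natural fibre-product decomposition of $P$. After replacing each $\G_i$ by $p_i(P)$ (a finite-index subgroup, still of type $\Fin_k$), I may assume $P$ is a subdirect product. The base $n = k$ is immediate: $P$ then has finite index in $\G_1\times\cdots\times\G_n$ and so is of type $\Fin_k$.

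For the inductive step with $n > k$, set $T := p_{1,\dots,n-1}(P)$ and $Q := \G_n/N_n$, so that $P$ is the fibre product associated to $\ses{N_{1,\dots,n-1}}{T}{Q}$ and $\ses{N_n}{\G_n}{Q}$, as described at the end of Section \ref{sec:fibreproducts}. I then verify the hypotheses of the virtually-nilpotent case of the $(k-1)$-$k$-$(k+1)$-Conjecture: $Q$ is virtually nilpotent by Lemma \ref{lem:nilp} and finitely generated as a quotient of $\G_n$, so it is of type $\Fin_\infty$; the group $T$ is a subdirect product of $\G_1,\dots,\G_{n-1}$ that still virtually surjects to $k$-tuples (since $p_J(T) = p_J(P)$ for $J\subset\{1,\dots,n-1\}$), so by the inner induction on $n$ it is of type $\Fin_k$; and $\G_n$ is of type $\Fin_k$ by hypothesis.

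The remaining input is that $N_{1,\dots,n-1}$ be of type $\Fin_{k-1}$. By Lemma \ref{lem:vskernel} it virtually surjects to $(k-1)$-tuples inside $\G_1\times\cdots\times\G_{n-1}$. For $k \geq 3$ the outer induction on $k$ supplies VSC for $k-1$ and we are done. For the base case $k = 2$ I would instead apply Proposition \ref{prop:vs1}, whose extra nilpotency hypothesis asks that $p_i(N_{1,\dots,n-1})/N_i$ be virtually nilpotent; this holds because these are finite-index subgroups of the virtually nilpotent quotients $\G_i/N_i$ furnished by Lemma \ref{lem:nilp} applied to $P$. Feeding all four inputs into the $(k-1)$-$k$-$(k+1)$-Conjecture yields $P$ of type $\Fin_k$ and closes the induction. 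The main delicacy is precisely this base $k = 2$: VSC for $k = 1$ is false in general, and the fix is Proposition \ref{prop:vs1} together with the automatic nilpotency of $\G_i/N_i$ supplied by Lemma \ref{lem:nilp} — which is also exactly why assuming the conjecture only for virtually nilpotent $Q$ is enough.
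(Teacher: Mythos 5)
Your proposal is correct and follows essentially the same route as the paper: a double induction on $k$ and $n$, reduction to subdirect products, the fibre-product decomposition via $T=p_{1,\dots,n-1}(P)$, Lemmas \ref{lem:nilp} and \ref{lem:vskernel}, and Proposition \ref{prop:vs1} to handle the failure of the $k=1$ case. The only (cosmetic) difference is that the paper folds the nilpotency hypothesis into a single inductive Claim valid for all $k\geq 1$, whereas you treat $k=2$ as a separate base case invoking Proposition \ref{prop:vs1} directly.
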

\begin{proof}
  We prove the following statement by induction on $k\geq 1$:
  \begin{unnumclaim}
  Let $\G_1,\dots,\G_n$ be groups of type $\Fin_k$
  and $P\leq\G_1\times\dots\times\G_n$
  a subgroup of the direct product. If $P$ virtually
  surjects to $k$-tuples and $p_i(P)/(P\cap\G_i)$ is virtually nilpotent
  for all $i$, then
  $P$ is of type $\Fin_k$.
  \end{unnumclaim}
  
  Of course, the nilpotency assumption is redundant when $k\geq 2$ by Lemma \ref{lem:nilp},
  so this Claim does imply the Virtual Surjections Conjecture.
  Furthermore, by the same argument as in the proof of Proposition
  \ref{prop:vs1} we can always reduce to the case where $P$
  is a subdirect product: If $P$ is not subdirect, we replace all the $\G_i$
  by their finite index subgroups $p_i(P)$, which will not affect the finiteness conditions.
  
  We have already supplied the base case $k=1$ of the induction in Proposition \ref{prop:vs1}.
  So fix some $k\geq 2$. We now prove the Claim for this
  $k$ by induction on $n$.
  
  If $n\leq k$ then there is nothing to prove: $P$ will be
  of finite index in $\G_1\times\dots\times\G_n$ and therefore
  certainly of type $\Fin_k$.
  
  So assume $n>k$. Set $T:=p_{1,\dots,n-1}(P)$, so $P$
  is the fibre product associated to the sequences
  \begin{alignat*}{2}
  \ses{N_{1,\dots,n-1}}{&\,T&}Q\\
  \ses{N_n}{&\,\G_n\,&}Q.
  \end{alignat*}
  Here, $T\leq \G_1\times\dots\times\G_{n-1}$ is a subdirect product, which
  also virtually surjects to $k$-tuples, since $p_J(T)=p_J(P)$
  for all $J\subset\{1,\dots,n-1\}$. By induction, we can assume that
  $T$ is of type $\Fin_k$.
  
  By Lemma \ref{lem:vskernel}, $N_{1,\dots,n-1}\leq\G_1\times\dots\times\G_{n-1}$
  virtually surjects to $(k-1)$-tuples. Furthermore,
  \[\quot{\G_i}{(N_{1,\dots,n-1}\cap\G_i)}=\quot{\G_i}{P\cap\G_i}\]
  is virtually nilpotent for all $i\in\{1,\dots,n-1\}$. Thus, again by induction
  (on $k$ this time), we can assume that $N_{1,\dots,n-1}$ is of type 
  $\Fin_{k-1}$.
  
  By assumption, $Q\isom\G_n/N_n$ is virtually nilpotent
  and finitely generated (since $\G_n$ is) and therefore of type $\Fin_\infty$.
  So, assuming the $(k-1)$-$k$-$(k+1)$ Conjecture, we can conclude
  that $P$ is of type $\Fin_k$.
\end{proof}

Two remarks are in order. Firstly, from the proof it is clear that
we only need to assume the $n$-$(n+1)$-$(n+2)$ Conjecture for
$n<k$ to conclude that virtual surjection to $k$-tuples
implies type $\Fin_k$. In particular, the $1$-$2$-$3$ Theorem
implies the Virtual Surjection to Pairs Theorem, i.e.\ if a 
subgroup $P\leq\G_1\times\dots\times\G_n$ of a direct product
of finitely presented groups $\G_1,\dots,\G_n$ virtually
surjects to pairs of factors $\G_i\times \G_j$, then $P$ is
finitely presented. This was first proved in \cite[Theorem E]{BHMSFinPres}.

The second remark concerns the word ``virtually'' in the statement
of Theorem \ref{thm:implication}. In fact, a quick argument shows that
it suffices to assume the $n$-$(n+1)$-\mbox{$(n+2)$} Conjecture for nilpotent
quotients $Q$, as this implies the conjecture for \emph{virtually}
nilpotent quotients.

To see this, assume that the $n$-$(n+1)$-$(n+2)$ Conjecture holds
for some particular quotient group $Q$ and let $\bar Q$ be a group
that contains $Q$ as a finite index subgroup.

Let
\begin{gather*}
\ses[\pi_1]{N_1}{\G_1}{\bar Q}\\
\ses[\pi_2]{N_2}{\G_2}{\bar Q}
\end{gather*}
be short exact sequences such that $N_1$ is of type $\Fin_n$ and
$\G_1$ and $\G_2$ are of type $\Fin_{n+1}$. We will show that the fibre
product of $\pi_1$ and $\pi_2$, call it $P$, is of type $\Fin_{n+1}$.
Set $\G_1':=\pi_1^{-1}(Q)$ and $\G_2':=\pi_2^{-1}(Q)$,
so we have another pair of short exact sequences
\begin{gather*}
\ses[\pi_1|]{N_1}{\G_1'}{Q}\\
\ses[\pi_2|]{N_2}{\G_2'}{Q}.
\end{gather*}
Now $\G_1'$ and $\G_2'$ have finite index in $\G_1$ and $\G_2$ respectively,
so they are of type $\Fin_{n+1}$.
We can therefore apply
the assumption that the $n$-$(n+1)$-$(n+2)$ Conjecture holds
for $Q$ to obtain that $P'\leq\G_1'\times\G_2'$, the fibre product
associated to the latter pair of short exact sequences, is of type $\Fin_{n+1}$.
But $P'$ is just the intersection of $P$ with the finite index subgroup
$\G_1'\times\G_2'\leq\G_1\times\G_2$, so $P'$ has finite index in $P$.
Thus $P$ is of type $\Fin_{n+1}$ as well.

\section{A topological approach}\label{sec:top}

The statement of the \nonetwo\ Conjecture is at its core a topological question,
as the finiteness properties $\Fin_n$ are concerned with the existence of 
classifiying complexes of a certain type and thus implicitly with
questions about the finite generation of higher homotopy groups:
Having constructed a finite $n$-skeleton $\skel Xn$ of a classifying complex
for a group $\G$, the group is of type $\Fin_{n+1}$ if and only if
$\pi_n(\skel Xn)$ is finitely generated as a $\Z\G$-module.

It thus appears natural to approach the Conjecture from a topological
point of view. Indeed, that is the route 
taken in the proof of the $1$-$2$-$3$ Theorem in \cite{BBMS}.
There, the authors make heavy use of the combinatorial characterisation
of the second homotopy group of a $2$-complex in terms
of identity sequences and crossed modules. Such a characterisation, alas,
is not available for the higher homotopy groups, and so
their method of proof cannot be emulated in the general case in any obvious way.

Yet, we will see how even without the combinatorial
formalism, we can obtain a number of results from a topological setting. We will derive the 
Split $n$-$(n+1)$-$(n+2)$ Conjecture
from a general lemma relating the finiteness properties
of the groups in two short exact sequences. Furthermore we will show
that the general $n$-$(n+1)$-$(n+2)$ Conjecture can be reduced to the
case where the second factor is a finitely generated free group.

\subsection{Introducing stacks}
To start off our investigation of fibre products we first need
to establish some tools for constructing classifying complexes
of group extensions. Given a short exact sequence of groups
$\ses N\G Q$ along with classifying complexes for the kernel and
quotient, the goal is to construct a useful classifying complex
for the middle group $\G$. ``Useful'' is a vague term here, of course,
and various different decriptions can be found in
the literature (e.g.\ the
``complexes of spaces'' in \cite{CorsonComplexesOfSpaces},
the construction in \cite{Stark} and the more algebraic
analogue of ``complexes of groups'' 
\cite{CorsonComplexesOfGroups,HaefligerComplexesOfGroups}).
The best-suited to our purposes, and the one we will employ 
here, is the one given by Geoghegan in terms
of ``stacks'' in \cite[Chapter 6]{GeogheganBook}.

Let $\ses[\pi] N\G Q$ be a short exact sequence of groups and
$X$ a $\B N$-complex, $Z$ a $\B Q$-complex. Then Geoghegan's construction
yields a $\B\G$-complex $Y$ and a cellular map $Y\to Z$, which
satisfies properties similar to those of a fibre product over $Z$ with fibre $X$.
We shall sketch the relevant points of the construction.
First note that we can use the Borel construction 
to obtain a $\B\G$-complex $Y_1$, which is an
actual fibre product over $Z$: Let
$Y_0$ be any $\B\G$-complex, and let $\tilde Y_0$, $\tilde Z$ denote the
universal covers of $Y_0$ and $Z$, respectively. $\G$ acts naturally
on $\tilde Y_0$ as the group of deck transformations
and also on $Z$ via the homomorphism $\pi:\G\to Q$.
So $\G$ acts diagonally on the direct product $\tilde Y_0\times\tilde Z$,
and this action is free, since the action on the first factor is.
Therefore the quotient $Y_1:=(\tilde Y_0\times\tilde Z)/\G$ is a $\B\G$-complex.
Furthermore the projection map $\tilde Y_0\times\tilde Z\to\tilde Z$
descends to a cellular map $\phi_1:Y_1\to Z$ on the respective quotients by the $\G$-actions.
It is easily checked that $\phi_1$ is in fact a fibre bundle
with fibre $\tilde Y_0/N$. This fibre is a $\B N$-complex, though usually
a very large one. We would like to replace the fibres by some given
$\B N$-complex $X$, e.g.\ in our case, one with finite skeleta up to some dimension.
Geoghegan's construction yields such a complex, but it will not in general
be a fibre bundle over $Z$, but merely a \emph{stack}, defined as follows
in \cite[Section 6.1]{GeogheganBook}:
\begin{defn}
  Let $F$, $E$ and $B$ be CW-complexes. Denote by $C_n$ the set of closed $n$-cells
  of $B$. A map $\phi:E\to B$ is called a stack with fibre $F$, if
  there is for each $c\in C_n$ a map $f_e:F\times S^{n-1}\to\phi^{-1}(\skel B{n-1})$
  and denoting the disjoint union of these maps by\label{defn:stack}
  \[f_n:\bigdisjcup_{c\in C_n} X\times S^{n-1}\to \phi^{-1}(\skel B{n-1})\]
  there are for each $n\geq 0$ homeomorphisms\footnotemark
  \[k_n:\phi^{-1}(\skel B{n-1})\cup_{f_n}\left(\bigdisjcup_{c\in C_n} X\times D^n\right)\to\phi^{-1}(\skel Bn),\]
  such that
  \begin{enumerate}
    \item $k_n$ agrees with inclusion on $\phi^{-1}(\skel B{n-1})$.
    \item $k_n$ maps each closed cell of $E$ onto a closed cell of $B$.
    \item The following diagram commutes:
    \[\begin{tikzcd}
       \phi^{-1}(\skel B{n-1})\disjcup\left(\bigdisjcup_{c\in C_n} F\times D^n\right)\dar{\text{quotient}}\drar{u}\\
       \phi^{-1}(\skel B{n-1})\cup_{f_n}\left(\bigdisjcup_{c\in C_n} F\times D^n\right)\dar{k_n}&\skel Bn\\
       \phi^{-1}(\skel Bn)\ar{ur}{\phi|}
    \end{tikzcd}\]
    where $u$ agrees with $\phi$ on $\phi^{-1}(\skel B{n-1})$ and on each
    copy $X\times D^n$ corresponding to some cell $c\in C_n$ it is projection to $D^n$
    followed by a characteristic map $D^n\to c$.
  \end{enumerate}
\end{defn}
\footnotetext{We write $\cup_{f_n}$ for ``gluing along $f_n$'', i.e.\ the quotient
space of the disjoint union identifying each $x$ in the domain of $f_n$ with $f_n(x)$.}
Roughly speaking, if $\phi:E\to B$ is a stack, then $E$ can be built up step-by-step as a filtration
$E_n:=\phi^{-1}(\skel Bn)$, where going from $E_{n-1}$ to $E_n$ involves
gluing in a complex $F\times D^n$ over each $n$-cell $c\in C_n$.
Over the interior $\mathring c$ of the cell the map 
$\phi:\phi^{-1}(\mathring c)\to \mathring c$
has the form of a projection $F\times\mathring c\to\mathring c$.

Note also what the definition means in dimension $0$: By convention,
$S^{-1}=\emptyset=\skel B{-1}$, so $f_0$ is the map $\emptyset\to\emptyset$ and
$k_0$ is a homeomorphism $\bigdisjcup_{e\in E_0} F\to\phi^{-1}(\skel B0)$, that is
$\phi^{-1}(\skel B0)$ consists of a copy of $F$ for each vertex of $B$ and
$\phi$ merely maps each copy to the corresponding vertex.

The utility of Definition \ref{defn:stack} lies in two facts: Firstly, the Borel construction
described above in fact yields a stack $(\xtilde Y_0\times\xtilde Z)/\Gamma\to Z$
with fibre $X_0:=\xtilde Y_0/N$.
Secondly, if $\phi:E\to B$ is a stack with fibre $F$ and $F'$ is homotopy equivalent to $F$, 
we can ``rebuild'' the stack, by gluing in the fibre $F'$ instead of $F$ at each step.
This yields a new stack $\phi':E'\to B$, and a homotopy equivalence
$h:E\to E'$ making the diagram
\[\begin{tikzcd}[column sep=small]
E\drar[swap]{\phi}\ar{rr}{h}&&E'\dlar{\phi'}\\
&B
\end{tikzcd}\]
commute up to homotopy. This homotopy can be chosen such that
the above diagram commutes up to homotopy
``over every cell'', meaning that there is a
homotopy $H:E\times I\to B$ from $\phi$ to $\phi'\comp h$
with $H(\phi^{-1}(e)\times I)\subset e$ for all closed
cells $e\subset B$.
Proofs of these facts can be found in \cite[Section 6.1]{GeogheganBook}.

In particular, in our example of a stack obtained from the Borel construction,
we can replace the fibre $X_0$ by any given $\B N$-complex $X$.
So we obtain the following (this is \cite[Theorem 7.1.10]{GeogheganBook}):
\begin{thm}
Let\label{thm:stack} $\ses[\pi]N\G Q$ be a short exact sequence. Furthermore, let
$X$ be a $\B N$-complex and $Z$ a $\B Q$-complex and identify their
fundamental groups with $N$ and $Q$, respectively, by fixing isomorphisms
$N\isom\pi_1(X,x_0)$ and $Q\isom\pi_1(Z,z_0)$. Then there is a
$\B\G$-complex $Y$ and a cellular map $\phi:Y\to Z$ with the following
properties:
\begin{enumerate}
\item $\phi$ induces the homomorphism $\pi:\Gamma\to Q$ on the level of fundamental
groups.
\item $\phi$ is a stack with fibre $X$.\qed
\end{enumerate}
\end{thm}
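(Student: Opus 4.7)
The plan is essentially to execute the two-stage construction sketched in the paragraphs immediately preceding the theorem: first build a stack via the Borel construction, then use the rebuilding principle to swap in the prescribed fibre $X$.

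First I would fix any $\B\G$-complex $Y_0$ (which exists, since $\G$ is a group) and form the Borel construction $Y_1 := (\tilde Y_0 \times \tilde Z)/\G$, where $\G$ acts on $\tilde Y_0$ by deck transformations and on $\tilde Z$ via the composition of $\pi : \G \to Q$ with the $Q$-action on $\tilde Z$. The diagonal $\G$-action is free because the action on the first factor is, so $Y_1$ is a CW-complex with contractible universal cover $\tilde Y_0 \times \tilde Z$, hence a $\B\G$-complex. The projection $\tilde Y_0 \times \tilde Z \to \tilde Z$ is $\G$-equivariant and descends to a cellular map $\phi_1 : Y_1 \to Z$. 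A computation on $\pi_1$ shows that $\phi_1$ induces exactly $\pi : \G \to Q$: the composition of the inclusion of the fibre $\tilde Y_0/N$ with $\phi_1$ is nullhomotopic, and the map on the quotients of the fundamental groups matches the action of $Q$ on $\tilde Z$.

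Next I would verify that $\phi_1$ is a stack in the sense of Definition \ref{defn:stack}, with fibre $X_0 := \tilde Y_0 / N$. For each cell $c$ of $Z$, pick a lift $\tilde c \subset \tilde Z$; then $\phi_1^{-1}(\mathring c)$ is, via the diagonal quotient, canonically homeomorphic to $X_0 \times \mathring c$ (with $X_0$ built from the $\G/N = Q$-translates of $\tilde Y_0$ over the $Q$-orbit of $\tilde c$). Inductively over skeleta of $Z$ one obtains the attaching data $f_n$ and homeomorphisms $k_n$ required by the definition, essentially by reading off the attaching maps of cells of $\tilde Z$ and extending them trivially in the $X_0$-direction. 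This gives a stack $\phi_1 : Y_1 \to Z$ with fibre the $\B N$-complex $X_0$.

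Finally, since both $X_0$ and the given $X$ are $\B N$-complexes, they are homotopy equivalent. I would then invoke Geoghegan's rebuilding principle, cited in the paragraph preceding the theorem: inductively replace each attaching copy of $X_0 \times S^{n-1}$ used in the stack structure by the homotopy-equivalent $X \times S^{n-1}$, producing a new stack $\phi : Y \to Z$ with fibre $X$ and a homotopy equivalence $h : Y_1 \to Y$ commuting with $\phi_1$ and $\phi$ up to cellwise homotopy. Because $h$ is a homotopy equivalence, $Y$ is still a $\B\G$-complex, and the induced map on $\pi_1$ agrees with $\pi$ since the homotopy is fibrewise over $Z$.

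The main obstacle I expect is the careful bookkeeping in verifying that $\phi_1$ really is a stack (matching the attaching maps on each $n$-cell to a product structure $X_0 \times D^n$ compatibly across the skeletal filtration); the rebuilding step and the $\pi_1$-computation are then essentially formal consequences. Since all of these technicalities are worked out in \cite[Chapter 6]{GeogheganBook}, the cleanest route is simply to appeal to that reference for the stack structure and the rebuilding lemma, and to carry out only the identification of the induced map on fundamental groups by hand.
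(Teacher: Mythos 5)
Your proposal follows exactly the paper's route: the paper presents this result as \cite[Theorem 7.1.10]{GeogheganBook}, sketching the same two-stage argument (Borel construction giving a stack with fibre $\xtilde Y_0/N$, then Geoghegan's rebuilding principle to replace the fibre by the given $X$) and deferring the technical verifications to that reference. Your account is correct and matches the paper's treatment.
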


\subsection{Stacks and an $\Fin_{n+1}$-criterion}

Note that in any stack $\phi:E\to B$ with fibre $F$ the $m$-cells of $E$ are in bijective
correspondence with the set
$\bigcup_{k+l=m} C_k^B\times C_l^F$
where $C_k^B$ denote the set of $k$-cells of $B$ and $C_l^F$ the set of $l$-cells of $F$.
So in particular, if $B$ and $F$ have finite $n$-skeleta then the same will be true of $E$.
Applying this to the stack supplied by Theorem \ref{thm:stack}, we obtain the well-known
statement that any extension
of a group of type $\Fin_n$ by a group of type $\Fin_n$ is of type $\Fin_n$ itself.

Let us now consider a short exact sequence of groups
\[\ses[\pi]NP\G\]
where $N$ is of type $\Fin_n$ and $\G$ is of type $\Fin_{n+1}$ for some $n\geq 2$.\footnote{Most
of what follows can, in principle, be applied to the case $n=1$ as well. However, we have treated this case
before and excluding it here sometimes avoids clutter.}
We shall look for conditions for $P$ to be of type $\Fin_{n+1}$ (later, we will apply these considerations
to the short exact sequence $\ses{N_1}P{\Gamma_2}$ associated
to a fibre product $P\leq\Gamma_1\times\Gamma_2$).

Pick a $\B N$-complex $X$ with finite $n$-skeleton and a $\B\G$-complex
$Y$ with finite $(n+1)$-skeleton and let $\phi:W\to Y$ be a stack with fibre $X$,
as given by Theorem \ref{thm:stack}.
That is, $W$ is a $\B P$-complex and $\phi$ induces the homomorphism $\pi:P\surjarrow \G$.
We may furthermore assume that $Y$ has a single vertex $\ast$ (e.g.\ \cite[Proposition 7.1.5]{GeogheganBook}).
By the stack property, $\phi^{-1}(\ast)$ is a subcomplex isomorphic to $X$. 
Identifying these complexes,
we will frequently regard $X$ as a subcomplex of $W$.

As remarked above, the $n$-skeleton of $W$ will be finite.
The $(n+1)$-skeleton of $W$ will usually be infinite, since there might be
infinitely many $(n+1)$-cells in $X$. However, there are only finitely many
$(n+1)$-cells in $W$ that project down to cells in $Y$ of dimension $\geq 1$. 
In other words,
only finitely many $(n+1)$-cells of $W$ are not completely contained in $X=\phi^{-1}(\ast)$. So
$P$ is of type $\Fin_{n+1}$ if and only if we can remove from the $(n+1)$-skeleton
$\skel W{n+1}$ all but finitely many of the $(n+1)$-cells in $X\leq W$ while making sure that
the complex stays $n$-aspherical.

We will prove the following criterion, comparing the finiteness properties
in two short exact sequences related by a homomorphism:
\begin{prop}
Let\label{prop:transferfn} $\ses NP\G$ be a short exact sequence of groups with $\G$ of type
$\Fin_{n+1}$ and $N$ of type $\Fin_n$ for some $n\geq 2$. 
Assume that there is another short exact sequence $\ses{N'}{P'}{\G'}$ and a homomorphism
$\phi:P'\to P$ which restricts to an isomorphism $\phi:N'\to N$
such that $P'$ is of type $\Fin_{n+1}$.
\[\begin{tikzcd}
{\scriptstyle \Fin_n}&{\scriptstyle \Fin_{n+1}}&{\scriptstyle \Fin_{n+1}}\\[-5ex]
N'\rar[hook]\dar[hook, two heads]{\phi|}&P'\rar[two heads]\dar{\phi}&\G'\dar\\
N\rar[hook]&P\rar[two heads]&\G\\[-5ex]
{\scriptstyle \Fin_n}&&{\scriptstyle \Fin_{n+1}}
\end{tikzcd}\]
Then $P$ is of type $\Fin_{n+1}$ as well.
\end{prop}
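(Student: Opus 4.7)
The plan is to apply Theorem \ref{thm:stack} also to the second short exact sequence, using the isomorphism $N' \cong N$ so that the same $\B N$-complex $X$ can serve as the fibre for both stacks, and then use $\phi$ to construct a cellular comparison map. The $\Fin_{n+1}$-hypothesis on $P'$ is transferred to $P$ via this map.

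First, as in the text preceding the proposition, I build the stack $\phi_W \colon W \to Y$ with $X$ a $\B N$-complex of finite $n$-skeleton, $Y$ a $\B\G$-complex of finite $(n+1)$-skeleton with a single vertex, and $W$ a $\B P$-complex. Recall that it is enough to find a finite subset of the fibre $(n+1)$-cells of $W$ which, together with the finite set $E$ of non-fibre $(n+1)$-cells of $W$, kills $\pi_n(W^{(n)})$ as a $\Z P$-module. Viewing $X$ as a $\B{N'}$-complex via $N' \cong N$, I pick any $\B{\G'}$-complex $Y'$ with a single vertex, and apply Theorem \ref{thm:stack} again to produce a stack $\phi_{W'} \colon W' \to Y'$ with fibre $X$, so that $W'$ is a $\B{P'}$-complex whose basepoint fibre is identified with that of $W$. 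Using that $\phi$ restricts to the identity on $N \cong N'$, I construct a cellular map $f \colon W' \to W$ inducing $\phi$ on $\pi_1$ and restricting to the identity on the common fibre $X$; this proceeds inductively on skeleta of $W'$, starting from $\id_X$ and extending over higher-dimensional cells using the stack structure.

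Next, the hypothesis on $P'$ provides a $\B{P'}$-complex $W_*$ with finite $(n+1)$-skeleton, together with a cellular map $h \colon W_* \to W'$ realising $\id_{P'}$ on $\pi_1$. The composite $g := f \comp h \colon W_* \to W$ sends the finite $(n+1)$-skeleton of $W_*$ onto a finite subcomplex of $W^{(n+1)}$. Let $S$ be the (finite) set of fibre $(n+1)$-cells of $W$ appearing in this image. The claim, from which the proposition follows, is that the finite subcomplex $V := W^{(n)} \cup E \cup S$ of $W^{(n+1)}$ is $n$-aspherical: then $V$ is the $(n+1)$-skeleton of a $\B P$-complex and $P$ is of type $\Fin_{n+1}$.

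The main obstacle is this final claim, i.e.\ showing that the attaching maps of all the remaining fibre $(n+1)$-cells of $W$ lie in the $\Z P$-submodule of $\pi_n(W^{(n)}\cup E)$ generated by those from $S$. The plan for this rests on three ingredients: (i) because $P'$ is of type $\Fin_{n+1}$, the attaching maps of the $(n+1)$-cells of $W_*$ generate $\pi_n(W_*^{(n)})$ as a $\Z P'$-module; (ii) the cellular map $h$, being a map between $\B{P'}$-complexes that realises $\id_{P'}$, is a homotopy equivalence and so transports this finite generation faithfully to $W'$; and (iii) the map $f$, being the identity on the fibre $X$, carries fibre attaching maps in $W'$ to the same maps in $W$. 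Combining these, the images under $g = f\comp h$ of the finite set of $(n+1)$-cells of $W_*$ produce, modulo the non-fibre contributions already in $E$, a finite $\Z P$-generating set for the fibre-attachment part of $\pi_n(W^{(n)}\cup E)$, which is precisely what $S$ needs to accomplish.
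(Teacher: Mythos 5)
Your overall strategy --- build stacks $W$ and $W'$ with the same fibre $X$ for the two sequences, then transfer the finite $(n+1)$-skeleton available for $P'$ into $W$ along a cellular realisation of $\phi$ --- is the right idea and is essentially what the paper does, but ingredient (ii) conceals a genuine gap. Your $W_*$ is an \emph{arbitrary} $\B{P'}$-complex with finite $(n+1)$-skeleton, so $X^{(n)}$ need not sit inside $W_*$ as a subcomplex, and a homotopy equivalence $h\colon W_*\to W'$ controls the homotopy groups of the total spaces, not the $\Z P'$-modules $\pi_n$ of the $n$-skeleta: there is no reason for $h_*\colon\pi_n(W_*^{(n)})\to\pi_n(W'^{(n)})$ to be onto, let alone to carry the attaching maps of $W_*$ to a $\Z P'$-generating set for the fibre spheres. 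Concretely, to kill a class $[\alpha]$ with $\alpha\colon S^n\to X^{(n)}$ you would first have to replace $\alpha$ by some $\beta$ in $W_*^{(n)}$ with $h\comp\beta\simeq\alpha$; that correcting homotopy lives in $W'^{(n+1)}$ and may run through cells whose $f$-images lie \emph{outside} $g(W_*^{(n+1)})$, hence outside $V$. So the $\Z P$-span of $\{g_*[\alpha_e]\}\cup\{[\alpha_e]\colon e\in E\}$ has not been shown to equal $\pi_n(W^{(n)})$, which is what ``$V$ is $n$-aspherical'' requires.

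The fix --- and what the paper does --- is to force $X^{(n)}\subset W_*$ from the start: take $W_*$ to be $W'^{(n+1)}$ with all but finitely many fibre $(n+1)$-cells discarded (the $\Fin_{n+1}$ hypothesis on $P'$ together with finiteness of the $n$-skeleton of $W'$ guarantees a finite $n$-aspherical choice), and let $h$ be the inclusion. Then any $\alpha\colon S^n\to X^{(n)}$ contracts directly in $W_*$, and since $g=f|_{W_*}$ is the identity on $X^{(n)}$, the contraction pushes into $g(W_*^{(n+1)})\subset V$; now Lemma \ref{lem:wh} finishes the job. The paper isolates this step as a standalone $\Fin_{n+1}$-criterion, Theorem \ref{thm:contract}: it accepts any finite $(n+1)$-dimensional $n$-aspherical $\bar X$ containing $X^{(n)}$ $\pi_1$-injectively with a compatible homomorphism $f\colon\pi_1(\bar X)\to P$, builds a cellular realisation $\psi$ of $f$ on $\bar X^{(n)}$, and replaces the subcomplex $V$ of $W$ by the \emph{glued} complex $(W^{(n)}\cup E)\cup_\psi\bar X$, thereby sidestepping the need to identify which cells of $W$ carry the contraction at all.
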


First we will need an easy lemma which essentially states that removing some $(n+1)$-cells
from an $n$-aspherical complex results in a complex whose $n$\textsuperscript{th} homotopy
group is generated, as a $\pi_1$-module, by the attaching maps of the $(n+1)$-cells that have
been removed.

\begin{lemma}
Let $n\geq 2$. Let $A$ be a\label{lem:wh} connected, $(n+1)$-dimensional
CW-complex with $\pi_n(A)=0$
and $B,C\subset A$ subcomplexes
such that $C$ is connected, $\skel Bn=\skel An$ and $B\cup C=A$. Then $\pi_n(B)$ is generated
as a $\pi_1(B)$-module by the image of the map $\pi_n(\skel Cn)\to\pi_n(B)$
induced by inclusion.
\end{lemma}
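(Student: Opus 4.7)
The key structural observation is that, since $\skel{B}{n}=\skel{A}{n}$ and $A$ is $(n+1)$-dimensional, every cell of $A\setminus B$ is an $(n+1)$-cell, and each such cell lies in $C$ because $A=B\cup C$. In particular, the attaching map $\phi_e\colon S^n\to\skel{A}{n}=\skel{B}{n}$ of any such cell $e$ takes values in $\skel{C}{n}$. My plan is to combine this observation with the long exact sequence of the pair $(A,B)$, which together with the hypothesis $\pi_n(A)=0$ yields
\[
\pi_{n+1}(A,B)\xrightarrow{\partial}\pi_n(B)\to 0,
\]
and then to identify $\pi_{n+1}(A,B)$ cellularly as a $\pi_1(B)$-module generated by these $(n+1)$-cells.

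To carry out the identification I would pass to universal covers. Because $n\geq 2$ and $\skel{B}{n}=\skel{A}{n}$, the $2$-skeleta of $A$ and $B$ agree, so the inclusion $B\hookrightarrow A$ induces an isomorphism $\pi_1(B)\cong\pi_1(A)$; hence the preimage $\tilde B\subset\tilde A$ of $B$ under the universal covering $\tilde A\to A$ is connected and is itself the universal cover of $B$. The pair $(\tilde A,\tilde B)$ is still $n$-connected, because its relative cells are all lifts of $(n+1)$-cells of $A\setminus B$. Since $\tilde A$ and $\tilde B$ are simply-connected, the relative Hurewicz theorem gives
\[
\pi_{n+1}(A,B)\cong\pi_{n+1}(\tilde A,\tilde B)\cong H_{n+1}(\tilde A,\tilde B),
\]
and this last group, computed cellularly, is the free $\Z[\pi_1(B)]$-module on the $(n+1)$-cells of $A\setminus B$. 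Under these identifications, $\partial$ is the $\pi_1(B)$-equivariant map sending the relative class of each characteristic map $\Phi_e$ to the class $[\phi_e]\in\pi_n(B)$ of its attaching map.

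Combining the two ingredients, $\pi_n(B)$ is generated as a $\pi_1(B)$-module by the classes $[\phi_e]$ as $e$ ranges over the $(n+1)$-cells of $A\setminus B$. Since each $\phi_e$ factors through $\skel{C}{n}$, and $C$ is connected (so that $\skel{C}{1}\subset\skel{C}{n}$ supplies paths between basepoints of different attaching maps and a common basepoint for $\pi_n(\skel{C}{n})$), each $[\phi_e]$ lies in the image of $\pi_n(\skel{C}{n})\to\pi_n(B)$; hence that image generates $\pi_n(B)$ as a $\pi_1(B)$-module, as required. The main technical obstacle is the cellular identification of $\pi_{n+1}(A,B)$; this relies crucially on $\pi_1(B)\cong\pi_1(A)$, which is precisely why the hypothesis $n\geq 2$ is essential --- without it, $\tilde B$ could fail to be the universal cover of $B$ and the Hurewicz step would break down.
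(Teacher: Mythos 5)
Your proof is correct and follows essentially the same route as the paper: both arguments reduce to the fact that $\pi_n(B)$ is generated as a $\pi_1(B)$-module by the (based) attaching maps of the $(n+1)$-cells of $A$ not lying in $B$, which are then observed to factor through $C^{(n)}$. The only difference is that the paper quotes Whitehead's theorem on adding relations for this input, whereas you rederive it from the long exact sequence of the pair $(A,B)$ together with the relative Hurewicz theorem applied to universal covers --- which is exactly the standard proof of Whitehead's theorem, so the content coincides.
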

\begin{proof}
The pair $(\skel A{n+1},\skel An)$ is an $(n+1)$-cellular extension, so by a theorem of J.H.C.\ Whitehead
(\cite{WhiteheadAddingRelations,WhiteheadCHII}; see also \cite[Chapter V.1, Theorem 1.1]{WhiteheadHomotopyBook})
the kernel of the map $\pi_n(\skel An)\to\pi_n(\skel A{n+1})$ induced by inclusion is generated
by the attaching maps of $(n+1)$-cells. To make this statement precise, we will have to introduce basepoints.
For each $(n+1)$-cell $e$ of $A$, 
pick a characteristic
map $h_e:(D^{n+1},S^n)\to (\skel A{n+1},\skel An)$. Then pick basepoints $v_0\in \skel A0$ and
$x_0\in S^n$ and for each $e$ a path $p_e:[0,1]\to A$ from $v_0$ to $h_e(x_0)$. Writing
$\eps_n$ for some fixed generator of $\pi_n(S^n,x_0)$ and $d_e$ for the restriction
$h_e|_{S^n}$ (i.e.\ $d_e$ is an attaching map for the cell $e$), we have that $d_e$
represents an element ${d_e}_*({\eps_n})\in\pi_n(\skel An,h_e(x_0))$ and $p_e$
induces an isomorphism
\[\tau_{p_e}:\pi_n(\skel An,h_e(x_0))\to\pi_n(\skel An,v_0).\] The theorem by Whitehead
alluded to above then says that
the elements $\tau_{p_e}{d_e}_*(\eps_n)$ generate 
$\pi_n(\skel An,v_0)$ as a $\pi_1(A,v_0)$-module.

Now $(\skel An,B)$ is again an $(n+1)$-cellular extension, so $\iota_*:\pi_n(\skel An,v_0)\to\pi_n(B,v_0)$,
induced by inclusion, is a surjection. Since $B$ and $A$ have the same $2$-skeleton, we have
$\pi_1(A)=\pi_1(B)$. Therefore the elements $\iota_*\bigl(\tau_{p_e}{d_e}_*(\eps_n)\bigr)$
generate $\pi_n(B,v_0)$ as a $\pi_1(B)$-module. Now $\iota_*\bigl(\tau_{p_e}{d_e}_*(\eps_n)\bigr)=0$
for those $(n+1)$-cells $e$ which are contained in $B$, and all the other $(n+1)$-cells are contained in $C$,
since $B\cup C=A$. But if $e$ is in $C$ then $\iota_*\bigl(\tau_{p_e}{d_e}_*(\eps_n)\bigr)$ lies in
the image of the map $\pi_n(C^{(n)},v_0)\to\pi_n(B,v_0)$ (we can always choose $p_e$ to lie in $C$),
so this image generates $\pi_n(B,v_0)$ as a $\pi_1(B,v_0)$-module. This proves
the assertion.
\end{proof}

\begin{thm}
Let $\ses[\pi]NP\G$ be an exact sequence of groups, such
that $\G$ is of type\label{thm:contract}
$\Fin_{n+1}$ and $N$ is of type $\Fin_n$ (where $n\geq 2$).
Furthermore let $X$ be a $\B N$-complex with finite $n$-skeleton
and fix an isomorphism to identify $\pi_1(\B N)$ with $N$.
Then $P$ is of type $\Fin_{n+1}$ if and only if there exists a finite 
$(n+1)$-dimensional, $n$-aspherical
complex $\bar X$, such that
\begin{enumerate}
  \item $\skel Xn$ can be embedded as a subcomplex in $\bar X$
    and the inclusion $\iota:\skel Xn\injarrow \bar X$ induces an
    injective map $\iota_*$ on fundamental groups.
  \item There is a homomorphism $f:\pi_1(\bar X)\to P$ making the following
    diagram commute:
    \[\begin{tikzcd}
      \skel Xn\rar{\iota}&\bar X\\[-5ex]
      \pi_1(\skel Xn)\rar[hook]{\iota_*}\dar[equals]&\pi_1(\bar X)\dar{f}\\
      N\rar[hook]&P
    \end{tikzcd}\]
\end{enumerate}
\end{thm}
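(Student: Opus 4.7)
My plan is to work inside the $\B P$-complex $W$ furnished by Theorem~\ref{thm:stack}, realised as a stack $\phi:W\to Y$ with fibre $X$ over a $\B\G$-complex $Y$ that has finite $(n+1)$-skeleton and a single vertex $*$. The stack structure makes $\skel Wn$ finite and decomposes the $(n+1)$-cells of $W$ into a finite family of ``horizontal'' cells lying over positive-dimensional cells of $Y$, together with the $(n+1)$-cells of the fibre $X=\phi^{-1}(*)$, the only ones that can form an infinite set. I let $V:=\skel Wn \cup\{\text{horizontal }(n+1)\text{-cells}\}$, a finite subcomplex of $W$ that contains $\skel Xn$ and satisfies $\pi_1(V)=P$.

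For the forward direction, if $P$ is of type $\Fin_{n+1}$ then $\pi_n(\skel Wn)$ is finitely generated as a $\Z P$-module, so attaching finitely many $(n+1)$-cells to $\skel Wn$ along a set of $\Z P$-generators produces a finite $(n+1)$-dimensional complex $\bar X$ with $\pi_k(\bar X)=0$ for $2\leq k\leq n$. Since $\skel Xn\subset\skel Wn\subset\bar X$, $\pi_1(\bar X)=P$ (adjoining $(n+1)$-cells does not affect $\pi_1$ for $n\geq 2$), and the inclusion of $\skel Xn$ induces the injection $N\hookrightarrow P$, both conditions hold with $f=\id_P$.

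For the reverse direction, the idea is to cut $\skel W{n+1}$ down to a finite subcomplex that remains $n$-aspherical, using $\bar X$ as a template. The classifying property of $W=\B P$ gives a cellular map $g:\bar X\to\skel W{n+1}$ inducing $f$ on $\pi_1$ and extending the inclusion $\skel Xn\hookrightarrow W$; the obstruction for $2$-cells of $\bar X\setminus\skel Xn$ lies in $\pi_1(W)=P$ and vanishes because $f$ is a homomorphism agreeing with the given inclusion on $N$, while the obstructions for $k$-cells with $k\geq 3$ lie in $\pi_{k-1}(W)=0$. Let $W'\subset W$ be the finite subcomplex spanned by the image of $g$ and set $L:=V\cup W'\subset W$. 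Because $L\supset\skel Wn$, every relative cell of $(W,L)$ has dimension $\geq n+1\geq 3$, so $\pi_1(L)=P$ and $\pi_k(L)\cong\pi_k(W)=0$ for $2\leq k\leq n-1$. For the final vanishing $\pi_n(L)=0$ I would invoke Lemma~\ref{lem:wh} with $A=\skel W{n+1}$, $B=L$, $C=\skel X{n+1}$: the hypotheses are easily verified ($\skel Ln=\skel Wn=\skel An$ and $L\cup\skel X{n+1}=\skel W{n+1}$, while $\pi_n(A)=0$ by $n$-connectivity of $(W,\skel W{n+1})$), yielding that $\pi_n(L)$ is generated as a $P$-module by the image of $\pi_n(\skel Xn)\to\pi_n(L)$; but this inclusion factors through $g:\bar X\to L$ and $\pi_n(\bar X)=0$, so the image is trivial. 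Hence $L$ is a finite $(n+1)$-dimensional complex realising the $(n+1)$-skeleton of a $\B P$-complex, proving $P$ is of type $\Fin_{n+1}$.

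The step I expect to be the main obstacle is the construction of $g$: the theorem places no restriction on $\pi_1(\bar X)$ beyond receiving an injection from $N$, and does not require $f$ to be injective or surjective, so $\bar X$ is in general not a classifying space for its own fundamental group. The construction therefore must use $f$ and the asphericity of $W$ in the relevant degrees to kill the obstructions cell by cell; once $g$ is in hand, the rest of the argument is a bookkeeping application of Lemma~\ref{lem:wh} together with standard cellular topology.
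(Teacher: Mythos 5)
Your proof is correct and follows essentially the same route as the paper: the same stack $\phi\colon W\to Y$, the same finite subcomplex $V$, the same application of Lemma~\ref{lem:wh}, and the same key observation that $n$-spheres in $\skel Xn$ die in $\bar X$ so that the module generators of $\pi_n$ are killed. The only (cosmetic) differences are that in the forward direction you attach new $(n+1)$-cells along module generators where the paper selects finitely many existing cells of $W$, and in the reverse direction you realise $\bar X$ inside $W$ via a map $g$ and take the finite subcomplex $V\cup W'$, whereas the paper glues the $(n+1)$-cells of $\bar X$ onto $V$ externally along $\psi\colon\skel{\bar X}{n}\to\skel Wn$.
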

\begin{proof}
Let $Y$ be a $\B\G$-complex with a single vertex and finite $(n+1)$-skeleton and let
$\phi:W\to Y$ be a stack with fibre $X$ and inducing $\pi$ on fundamental
groups, as per Theorem \ref{thm:stack}. Then $W$ has finite $n$-skeleton. As before we regard
$X=\phi^{-1}(Y)$ as a subcomplex of $W$.

To prove one direction of the claim, suppose $P$ is of type $\Fin_{n+1}$. In this case some finite
$P$-invariant subcomplex $\skel{W'}{n+1}\subset \skel W{n+1}$ is $n$-aspherical.
Clearly $\skel{W'}{n+1}$ contains $X^{(n)}$ as a $\pi_1$-embedded
subcomplex and the inclusion $\skel{W'}{n+1}\to \skel W{n+1}$ induces a homomorphism
$f$ as required in the claim.

The idea of the proof for the other direction is to ``replace'' the potentially infinitely many $(n+1)$-cells of $X\subset W$ 
with the finitely many
$(n+1)$-cells from $\bar X$, glued into $W$ using a topological realisation of $f$.

Let $V$ denote the subcomplex of $\skel W{n+1}$ consisting of $\skel Wn$ together with those
finitely many
$(n+1)$-cells of $W$ that are mapped under $\phi$ to cells of $Z$ of dimension $\geq 1$.
Applying Lemma \ref{lem:wh} with $A=\skel W{n+1}$, $B=V$ and $C=\skel X{n+1}$ we obtain that
$\pi_n(V)$ is generated
as a $\pi_1(V)$-module by the image of the map $\pi_n(\skel Xn)\to\pi_n(V)$
induced by inclusion. Roughly speaking, we can push any $n$-sphere in $W$ into $\skel Xn$
via a homotopy not intersecting any of the $(n+1)$-cells in $X$. We now proceed to make
$n$-spheres in $X$ contractible.

We have the following diagram of spaces and inclusions and the induced
diagram on fundamental groups:
\[\begin{tikzcd}
\skel Xn\dar[equals]\rar[hook]{\iota}&\skel{\bar X}n   &&   \pi_1(\skel Xn)\dar[equals]\rar{\iota_*}&\pi_1(\skel{\bar X}n)\dar[dashed]{f}\\
\skel Xn\rar[hook]&\skel Wn        &&  N\rar[hook]&P
\end{tikzcd}\]
Since $\skel Wn$ is $(n-1)$-aspherical there is no obstruction to constructing a cellular map
$\psi:\skel{\bar X}{n}\to \skel Wn$ making the left square commute and inducing $f$ on the level of fundamental groups.
Now let $\xtilde W:=V\cup_{\psi}\bar X$,
i.e.\ $\xtilde W$ is
constructed by attaching to $V$ the $(n+1)$-cells of $\bar X$ along $\psi$.
Denote by $\bar\psi:\bar X\to\xtilde W$ the natural map (i.e.\ the composition of the inclusion map $\bar X\to V\disjcup \bar X$
with the quotient map $V\disjcup \bar X\to V\cup_{\psi}\bar X$).
\[\begin{tikzcd}
\skel Xn\dar[equals]\rar[hook]&\skel{\bar X}n\dar{\psi}\ar[hook]{rr}&&\bar X\dar{\bar\psi}\\
\skel Xn\rar[hook]&\skel Wn\rar[hook]&V\rar[hook]&\xtilde W
\end{tikzcd}\]

So $\xtilde W$ is a finite, $(n+1)$-dimensional complex with the same $n$-skeleton
as our original $\B P$-complex $W$. We will now show that $\xtilde W$ is in fact
$n$-aspherical, and therefore a finite $\B[n+1]P$, proving that $P$
is of type $\Fin_{n+1}$.

We consider the following sequence of homomorphisms induced by inclusions:
\[\pi_n(\skel Xn)\stackrel{\iota_1}\to\pi_n(V)\stackrel{\iota_2}\to\pi_n(\xtilde W).\]
First note that, since $\skel Vn=\skel{\xtilde W}n$, the map $\iota_2$ is a surjection.
If $\alpha:S^n\to \skel Xn$ is any $n$-sphere in $X$ then, since $\bar X$ is $n$-aspherical, $\alpha$ can be contracted
in $\bar X$, i.e.\ there is an extension
$\hat\alpha:D^{n+1}\to\bar X$ with $\hat\alpha|_{S^n}=\alpha$.
Since $\psi$ was assumed to restrict to inclusion on $\skel Xn$, so does
$\bar\psi$. So the composition
$\bar\psi\comp\hat\alpha:D^{n+1}\to\xtilde W$ shows that
$\alpha$ is null-homotopic in $\xtilde W$. In other words, the map
$\iota_2\comp\iota_1:\pi_n(\skel Xn)\to\pi_n(\skel{\xtilde W}{n+1})$, induced by inclusion,
is the zero map.

But we have shown above that the image of $\iota_1$ generates $\pi_n(V)$
as a $\pi_1$-module. Therefore a set of generators of $\pi_n(V)$ is sent to $0$
under $\iota_2$, proving that $\iota_2$ is the zero map. Combining this with the fact
that $\iota_2$ is a surjection, we get that $\pi_n(\xtilde W)=0$, as claimed.
\end{proof}

Proposition \ref{prop:transferfn} now follows as a corollary.

\begin{proof}[Proof of Proposition \ref{prop:transferfn}]
  Let $X$ be a $\B N'$-complex with finite $n$-skeleton. The quotient
  $\G'\isom P'/N'$ is of type $\Fin_n$ (actually, even $\Fin_{n+1}$; see
  e.g.\ \cite[Theorem 7.2.21]{GeogheganBook}), so
  we can pick
  a $\B\G'$-complex $Z'$ with finite $n$-skeleton and
  a single vertex as its $0$-skeleton. 
  
  By Theorem \ref{thm:stack}
  we can pick a $\B P'$-complex $W$ which is a stack over $Z'$ with fibre $X$.
  In particular $W$ has finite $n$-skeleton and contains $X$ as a subcomplex. 
  Since $P'$ is of type
  $\Fin_{n+1}$, there is a finite $n$-aspherical subcomplex $\skel{W'}{n+1}\subset\skel W{n+1}$
  with $\skel{W'}n=\skel Wn$. Since the fundamental group depends only on the $2$-skeleton,
  we have $\pi_1(\skel{W'}{n+1})=\pi_1(\skel W{n+1})\isom P'$ and
  $\skel{W'}{n+1}$ still contains $\skel Xn$ as a subcomplex with $\pi_1$-injective
  embedding $\skel Xn\injarrow \skel{W'}{n+1}$. Thus we can take
  $\skel{W'}{n+1}$ to be $\bar X$ in Theorem \ref{thm:contract}
  and conclude that $P$ is of type $\Fin_{n+1}$.
\end{proof}

\subsection{The Split $n$-$(n+1)$-$(n+2)$ Conjecture and the reduction to free $\G_2$}
Applying this to the situation of the \nonetwo\ Conjecture, consider, as always, two short 
exact sequences
\begin{gather*}
\ses[\pi_1]{N_1}{\G_1}Q\\
\ses[\pi_2]{N_2}{\G_2}Q
\end{gather*}
such that $N_1$ is of type $\Fin_n$ and $\Gamma_1$ and $\Gamma_2$ are of type $\Fin_{n+1}$
(with $n\geq 2$).

Then the associated fibre product $P$ fits into a short exact sequence
\[\ses[p_2]{N_1}P{\G_2},\]
which satisfies the finiteness requirements of Proposition \ref{prop:transferfn}. So to show that
$P$ is of type $\Fin_{n+1}$ we need to find a group $P'$ as in the statement of the Proposition. 
Note that $\Gamma_1$
is a group of type $\Fin_{n+1}$ that contains $N_1$ as a normal subgroup, as required of $P'$.
However, in general there is no homomorphism $\phi:\Gamma_1\to P$ which maps
$N_1\leq\G_1$ to $N_1\times 1\leq P$.\footnote{Note that in the symmetric case, where
the two short exact sequences are identical, there always is a natural homomorphism 
$\Gamma_1\to P\leq\Gamma_1\times\Gamma_1$, sending $\gamma$ to $(\gamma,\gamma)$.
But this homomorphism does not fit into a commutative 
diagram, as in the statement of Proposition \ref{prop:transferfn}, since it
does not map $N_1\leq \G_1$ to $N_1\times 1\leq P$.}
Yet there is one particular special case, where such a map does always exist, namely when the
second short exact sequence splits. This allows us to prove the split case
of the $n$-$(n+1)$-$(n+2)$ Conjecture.
\begin{cor}
Let\label{cor:splitn12}
\begin{gather*}
\ses[\pi_1]{N_1}{\G_1}Q\\
\ses[\pi_2]{N_2}{\G_2}Q
\end{gather*}
be short exact sequences with $\Gamma_1$, $\Gamma_2$ of type $\Fin_{n+1}$
and $N_1$ of type $\Fin_n$ and assume the second sequence splits.
Then the associated fibre product is of type $\Fin_{n+1}$.
\end{cor}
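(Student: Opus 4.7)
My plan is to apply Proposition \ref{prop:transferfn} to the short exact sequence $\ses[p_2]{N_1}{P}{\G_2}$ that arises naturally from the fibre product construction (identifying $N_1\times 1$ with $N_1$). The hypotheses of the Proposition on the kernel and quotient of this sequence are $N_1$ of type $\Fin_n$ and $\G_2$ of type $\Fin_{n+1}$, which are exactly what is assumed. So the task reduces to producing an auxiliary short exact sequence $\ses{N'}{P'}{\G'}$ with $P'$ of type $\Fin_{n+1}$, together with a homomorphism $\phi:P'\to P$ that restricts to an isomorphism $N'\to N_1$.

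The auxiliary sequence to use is the given $\ses[\pi_1]{N_1}{\G_1}{Q}$, so that $P'=\G_1$ already satisfies the finiteness requirement. The splitting hypothesis is precisely what supplies the map $\phi$: I would fix a section $s:Q\to\G_2$ of $\pi_2$ and define
\[\phi:\G_1\to\G_1\times\G_2,\qquad \g_1\mapsto(\g_1,s(\pi_1(\g_1))).\]
Three routine checks finish the construction: (i) the image lies in $P$, since $\pi_2(s(\pi_1(\g_1)))=\pi_1(\g_1)$; (ii) $\phi$ is a homomorphism, being the product of two homomorphisms; (iii) for $\g_1\in N_1=\ker\pi_1$ we have $\phi(\g_1)=(\g_1,s(1))=(\g_1,1)$, so $\phi$ restricts to the identification $N_1\cong N_1\times 1\subseteq P$. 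Proposition \ref{prop:transferfn} then immediately yields that $P$ is of type $\Fin_{n+1}$, at least for $n\geq 2$; the case $n=1$ is the $1$-$2$-$3$ Theorem recalled in the Introduction and needs no new argument.

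There is no substantive obstacle: the splitting is precisely the additional datum needed to build a homomorphism $\G_1\to P$ of the required form, circumventing the difficulty flagged in the paragraph preceding the Corollary. In the general (non-split) case the naive diagonal $\g\mapsto(\g,\g)$ from $\G_1$ into $P$ in the symmetric setting fails to carry $N_1$ into $N_1\times 1$, and in a twisted fibre product no such map exists at all; here the section $s$ ``twists'' the second coordinate via $s\circ\pi_1$ to remedy exactly this defect, after which the topological machinery of Proposition \ref{prop:transferfn} does all the real work.
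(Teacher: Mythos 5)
Your proposal is correct and is essentially identical to the paper's own proof: both apply Proposition \ref{prop:transferfn} to the sequence $\ses[p_2]{N_1}{P}{\G_2}$ with $P'=\G_1$ and the map $\g_1\mapsto(\g_1,\sigma(\pi_1(\g_1)))$ built from the splitting $\sigma$ of $\pi_2$. Your remark that the case $n=1$ falls back on the $1$-$2$-$3$ Theorem is a sensible point the paper leaves implicit.
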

\begin{remark}
  Note that we do not need an additional
assumption on $Q$ here (though the ones we have made on $\G_1$ and $N_1$
imply that $Q$ is at least of type $\Fin_{n+1}$).
\end{remark}
\begin{proof}
Let $\sigma:Q\to\Gamma_2$ be a splitting
homomorphism for the second sequence, so $\pi_2\comp\sigma=\id_Q$.
Now consider the exact sequence
\[\ses[p_2]{N_1}P{\Gamma_2}.\]
Define
\[\phi:\Gamma_1\to P, \gamma_1\mapsto(\gamma_1,\sigma\pi_1(\gamma_1)).\]
Note that since $\pi_1(\gamma_1)=\pi_2\sigma\pi_1(\gamma_1)$ the image of $\phi$ does indeed lie in $P$.
Obviously, $\phi$ restricts to the identity on $N_1$.
Therefore, Proposition \ref{prop:transferfn} applies to yield the conclusion.
\end{proof}

This result was previously obtained, using
different methods, by Martin Bridson and Ken Brown in unpublished work.

As for the $1$-$2$-$3$ Theorem, we can use Proposition
\ref{prop:transferfn} not just to prove the split case but also
to reduce the general case of the $n$-$(n+1)$-$(n+2)$ Conjecture
to the case when $\G_2$ is finitely generated free,
by the same method as before.

Consider again the sequences
\begin{gather*}
\ses[\pi_1]{N_1}{\G_1}Q\\
\ses[\pi_2]{N_2}{\G_2}Q
\end{gather*}
with $N_1$ of type $\Fin_n$, $\Gamma_1$ and $\Gamma_2$ of type $\Fin_{n+1}$ and
$Q$ of type $\Fin_{n+2}$
Now let $F$ be a finitely generated free group with an
epimorphism $p:F\to\Gamma_2$. Let $P'$ denote the fibre
product associated to the short exact sequences
\begin{alignat*}{3}
N_1&\injarrow\Gamma_1&&\overset{\pi_1}\surjarrow Q\\
N_2'&\injarrow F&&\overset{\pi_2p}\surjarrow Q
\end{alignat*}
where $N_2'=\ker(\pi_2p)=p^{-1}(N_2)$. Now the homomorphism
\[\id\times p:\Gamma_1\times F\to\Gamma_1\times\Gamma_2,(\gamma_1,g)\mapsto(\gamma_1,p(g))\]
maps $P'$ onto $P$ and clearly sends $N_1\times 1$ to $N_1\times 1$.
Thus by Proposition \ref{prop:transferfn}, $P$ is of type $\Fin_{n+1}$ if
$P'$ is.

\begin{prop}
  \label{prop:reductiontofree}
  If the $n$-$(n+1)$-$(n+2)$ Conjecture holds whenever $\G_2$
  is finitely generated free then it holds in general. 
\end{prop}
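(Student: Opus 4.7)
The plan is to realise the original fibre product $P$ as the image of an auxiliary fibre product $P'$ in which the second factor has been replaced by a finitely generated free group, and then transfer the finiteness property via Proposition \ref{prop:transferfn}. The construction is exactly the one sketched in the paragraph preceding the statement, so the task is to check the hypotheses of that proposition.

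First, I would pick a finitely generated free group $F$ together with an epimorphism $p:F\surjarrow\G_2$; this is possible because $\G_2$, being of type $\Fin_{n+1}$, is in particular finitely generated. Set $N_2':=\ker(\pi_2\comp p)=p^{-1}(N_2)$, so that we have a short exact sequence $\ses[\pi_2 p]{N_2'}{F}{Q}$ sharing the quotient $Q$ with the sequence $\ses[\pi_1]{N_1}{\G_1}{Q}$. Let $P'$ be the fibre product associated to $\pi_1$ and $\pi_2\comp p$. Since $F$ is free and thus of type $\Fin_\infty$, and the remaining groups $N_1$, $\G_1$, $Q$ retain their original finiteness properties, the assumed free case of the $n$-$(n+1)$-$(n+2)$ Conjecture applies to yield that $P'$ is of type $\Fin_{n+1}$.

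Next I would verify that the map
\[\id\times p:\G_1\times F\to\G_1\times\G_2,\qquad (\gamma_1,g)\mapsto (\gamma_1,p(g))\]
restricts to a surjection $\phi:P'\surjarrow P$. Containment in $P$ is automatic from the compatibility $\pi_1(\gamma_1)=\pi_2 p(g)$ for $(\gamma_1,g)\in P'$; surjectivity follows because any $(\gamma_1,\gamma_2)\in P$ admits a lift $g\in p^{-1}(\gamma_2)$, and then $(\gamma_1,g)\in P'$ by the definition of $P'$. Under the identifications $N_1\times 1\isom N_1\leq P'$ and $N_1\times 1\isom N_1\leq P$, the map $\phi$ restricts to the identity, and in particular to an isomorphism, on these kernel subgroups.

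Finally, I would apply Proposition \ref{prop:transferfn} to the short exact sequence $\ses[p_2]{N_1}{P}{\G_2}$, taking the auxiliary sequence to be $\ses{N_1}{P'}{\G_2'}$ where $\G_2':=P'/N_1$, and the homomorphism to be $\phi$. The finiteness hypotheses ($N_1$ of type $\Fin_n$, $\G_2$ of type $\Fin_{n+1}$, $P'$ of type $\Fin_{n+1}$) are all in place, and $\phi$ restricts to an isomorphism on the distinguished copies of $N_1$, so the proposition yields that $P$ is of type $\Fin_{n+1}$. No step presents a real obstacle: the argument is almost entirely a bookkeeping exercise assembling constructions already given, the only minor check being surjectivity of $\phi$ and that it is the identity on $N_1$.
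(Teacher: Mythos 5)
Your proposal is correct and follows the same route as the paper: pass to a free cover $p:F\surjarrow\G_2$, form the auxiliary fibre product $P'$, apply the assumed free case to get $P'$ of type $\Fin_{n+1}$, and transfer to $P$ via Proposition \ref{prop:transferfn} using $\id\times p$, which restricts to the identity on $N_1\times 1$. The extra checks you record (surjectivity of $\phi$ onto $P$ and the identification of the kernel copies of $N_1$) are exactly the details the paper leaves implicit.
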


\section{Homological finiteness properties of fibre products}
\label{sec:weak}
In the topological approach presented in the previous section,
we endeavoured to prove the $n$-$(n+1)$-$(n+2)$ Conjecture by
constructing a finite $(n+1)$-skeleton of a classifying space
of the fibre product directly. We have seen how proving a group
to be of type $\Fin_{n+1}$ comes down to proving a certain higher homotopy
group to be finitely generated.

Here, we will take a different course, and consider
homology groups instead of homotopy. This approach brings with it
the usual advantages of homology over homotopy: Better computability
and the availability of powerful algebraic tools, notably, in our case,
spectral sequences. It also brings the usual disadvantages: The
homological finiteness properties we will consider here are strictly
weaker than the $\Fin_n$-properties.

Nonetheless, these properties have proved useful in the analysis of subdirect
products in the past. We will prove in this section homological versions of the
$n$-$(n+1)$-$(n+2)$ and Virtual Surjection Conjectures, which,
unlike the result on split sequences from the previous section, 
apply in full generality.

\subsection{Type $\wFP_n$}
\label{subsec:homfin}

We will consider the following homological finiteness condition.\footnote{A similar
condition has been used before by K.\ Brown in \cite{BrownEuler2,BrownEuler3} in the
context of Euler characteristics of groups. He defines
a group to be of finite homological type if it has finite virtual
cohomological dimension and every finite index subgroup
has finitely generated integral homology. 
We use a different nomenclature to avoid confusion with another,
stronger condition, also called ``finite homological type'', used in \cite{BrownBook},
see also \cite{LearySaadetoglu}.}

\begin{defn}
  A group $\G$ is of type $\wFP_n$ (``weak $\FP_n$'') if
  the homology groups 
  $\Hom_k(\G',\Z)$ are finitely generated for all
  $k\leq n$ and all finite index subgroups $\G'\leq\G$.
\end{defn}

Recall that a group is of type $\FP_n$ if $\Z$, considered
as a trivial $\G$-module, has a free resolution by $\G$-modules
which is finitely generated up to dimension $n$. Any group of
type $\Fin_n$ is of type $\FP_n$ (the augmented
chain complex of the universal cover of a suitable
classifying complex supplies the desired resolution). And, since
having type $\FP_n$ is preserved under passing to subgroups of finite index,
every group of type $\FP_n$ is of type $\wFP_n$. The converse
of this latter implication does not hold, as shown by
Leary and Saadeto\u{g}lu \cite{LearySaadetoglu}.\footnote{The authors
employ Brown's stronger definition of finite homological type. See Remark 2
in their paper for the relationship between these two conditions.} 

\subsection{The Weak $n$-$(n+1)$-$(n+2)$ Theorem and the Weak Virtual Surjections Conjecture}
Let $\ses N\G Q$ be a short exact sequence.
Using the LHS spectral sequence it is easy to prove that\footnote{we will consider
only integral homology here, so we write
$\Hom_n(\G):=\Hom_n(\G,\Z)$}
$\Hom_n(\G)$ will be finitely generated
if $Q$ is of type $\FP_n$ and $N$ is of type $\wFP_n$. 
Looking a little more carefully, we see that 
in fact, we can get away with a weaker condition on $N$:

\begin{lemma}
  Let\label{lem:norm_wfpn2} $\ses N\G Q$ be a short exact sequence with
  $Q$ of type $\FP_n$ and $N$ of type $\wFP_{n-1}$. Also, assume
  that $\Hom_0(Q,{H_n(N)})$ is finitely generated. 
  Then $\Hom_k(\G,\Z)$ is finitely generated for $k\leq n$.
\end{lemma}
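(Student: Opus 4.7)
The plan is to apply the Lyndon--Hochschild--Serre spectral sequence associated to $\ses N\G Q$, which takes the form
\[E^2_{p,q}=\Hom_p(Q,\Hom_q(N,\Z))\fols \Hom_{p+q}(\G,\Z).\]
Since $\Hom_k(\G,\Z)$ admits a finite filtration whose successive quotients are the terms $E^\infty_{p,q}$ with $p+q=k$, and each such term is a subquotient of $E^2_{p,q}$, it suffices to show that $E^2_{p,q}$ is finitely generated as an abelian group for every $(p,q)$ with $p+q\le n$. Then finite generation of $\Hom_k(\G,\Z)$ follows because an extension of finitely generated abelian groups is finitely generated.

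First I would handle the cases with $q\le n-1$. Because $N$ is of type $\wFP_{n-1}$, the abelian group $\Hom_q(N,\Z)$ is finitely generated, hence in particular finitely generated as a $\Z Q$-module. The auxiliary fact needed here is the following standard observation: if $Q$ is of type $\FP_p$ and $M$ is a $\Z Q$-module that is finitely generated as an abelian group, then $\Hom_p(Q,M)$ is a finitely generated abelian group. This is seen by choosing a projective resolution $F_\bullet\to\Z$ over $\Z Q$ with $F_i$ finitely generated for $i\le p$; then $F_p\otimes_{\Z Q} M$ is a retract of $M^{\oplus r}$ for some $r$, and $\Hom_p(Q,M)$ is a subquotient of this finitely generated abelian group. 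Applying this to $M=\Hom_q(N,\Z)$ and using that $Q$ is of type $\FP_n$ (hence $\FP_p$ for all $p\le n$) gives finite generation of $E^2_{p,q}$ whenever $q\le n-1$ and $p\le n$.

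The only remaining case with $p+q\le n$ is $(p,q)=(0,n)$, and here $E^2_{0,n}=\Hom_0(Q,\Hom_n(N,\Z))$ is finitely generated by the extra hypothesis of the lemma --- indeed, this hypothesis is included precisely because we do not assume $\Hom_n(N,\Z)$ itself is finitely generated. Combining the two cases, every $E^2_{p,q}$ with $p+q\le n$ is finitely generated, and the convergence argument sketched above completes the proof.

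There is no real obstacle in this lemma; the only point worth emphasising is the careful accounting of indices, in particular noticing that to cover the ``diagonal'' $p+q=n$ one needs finite generation of $\Hom_0(Q,\Hom_n(N,\Z))$ separately from the $\wFP_{n-1}$ hypothesis on $N$. This is why the lemma is stated with that specific extra assumption rather than simply assuming $N$ is of type $\wFP_n$.
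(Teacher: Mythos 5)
Your proof is correct and follows essentially the same route as the paper: apply the LHS spectral sequence, note that $E^2_{p,q}$ is finitely generated for $q\le n-1$, $p\le n$ by the hypotheses on $N$ and $Q$, use the extra hypothesis to cover the corner entry $E^2_{0,n}$, and conclude by convergence. The only difference is that you spell out the standard fact that $\FP_p$ for $Q$ together with finite generation of $M$ as an abelian group gives finite generation of $\Hom_p(Q,M)$, which the paper takes as read.
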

\begin{proof}
  This follows easily from the LHS spectral sequence
  \[E^2_{pq}=H_p(Q,H_q(N))\abut H_{p+q}(\G).\]
  The assumptions on $N$ and $Q$ imply that
  $E^2_{pq}$ is finitely generated for
  $q\leq n-1$ and $p\leq n$. In particular, $E^2_{pq}$
  is finitely generated whenever $p+q\leq n-1$.
  Since $\Hom_0(Q,{\Hom_n(N)})$
  is finitely generated as well, all the entries
  $E^2_{pq}$ on the diagonal $p+q=n$ are finitely generated.
  Then the same holds true for the corresponding entries
  on the $E^\infty$ page. But for any $k$, the entries $E^\infty_{pq}$ on the diagonal
  $p+q=k$ are the factors
  of a filtration of $\Hom_k(\G)$, and so $\Hom_k(\G)$ is
  finitely generated for all $k\leq n$.
\end{proof}

Conversely, if $\G$ is of type $\wFP_n$, then 
the stated condition for the kernel $N$ holds, whenever
$\Hom_{n+1}(Q,\Z)$ is finitely generated.

\begin{lemma}
  Let\label{lem:norm_wfpn} $\ses N\G Q$ be a short exact sequence of groups.
  Assume that $N$ is of type $\wFP_{n-1}$, that $\G$ is of type $\wFP_n$ and 
  that $Q$ is of type $\FP_n$. 
  
  If $\Hom_{n+1}(Q)$ is finitely generated, then so is
  $\Hom_0(Q,\Hom_n(N))$. If $\G$ is of type $\wFP_{n+1}$ the converse
  holds as well.
\end{lemma}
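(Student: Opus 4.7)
The plan is to apply the Lyndon--Hochschild--Serre spectral sequence
\[E^2_{pq} = \Hom_p(Q, \Hom_q(N)) \abut \Hom_{p+q}(\G)\]
associated to the given short exact sequence, and to trace finite generation along the two critical diagonals $p+q = n$ (for the forward direction) and $p+q = n+1$ (for the converse).

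A preliminary observation, exactly as in the proof of Lemma \ref{lem:norm_wfpn2}, is that $E^2_{pq}$ is finitely generated whenever $p \leq n$ and $q \leq n - 1$: since $N$ is of type $\wFP_{n-1}$, each coefficient group $\Hom_q(N)$ is finitely generated (and in particular finitely generated as a $\Z Q$-module), and since $Q$ is of type $\FP_n$, the functor $\Hom_p(Q, -)$ sends finitely generated $\Z Q$-modules to finitely generated abelian groups for $p \leq n$.

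For the forward direction, suppose $\Hom_{n+1}(Q)$ is finitely generated. The differentials landing on $E^r_{0,n}$ originate at $E^r_{r, n-r+1}$ for $r = 2, \dots, n+1$. For $r \leq n$ these sources are finitely generated by the preliminary observation, and for $r = n+1$ the source $E^{n+1}_{n+1,0}$ is a subquotient of $E^2_{n+1, 0} = \Hom_{n+1}(Q)$, finitely generated by hypothesis. Since $\G$ is of type $\wFP_n$, the subgroup $E^\infty_{0,n} = E^{n+2}_{0,n}$ sitting at the bottom of the filtration of $\Hom_n(\G)$ is finitely generated. The short exact sequences
\[0 \to \im\bigl(d^r \colon E^r_{r, n-r+1} \to E^r_{0,n}\bigr) \to E^r_{0,n} \to E^{r+1}_{0,n} \to 0\]
then let us descend from $r = n+1$ down to $r = 2$ and conclude that $E^2_{0,n} = \Hom_0(Q, \Hom_n(N))$ is finitely generated.

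The converse is the dual argument. Assuming $\G$ is of type $\wFP_{n+1}$ and $\Hom_0(Q, \Hom_n(N))$ is finitely generated, the top quotient $E^\infty_{n+1, 0}$ in the filtration of $\Hom_{n+1}(\G)$ is finitely generated. No differential hits $E^r_{n+1, 0}$ (the prospective source would sit in negative $q$-coordinate), so $E^{r+1}_{n+1, 0} = \ker(d^r)$ fits into
\[0 \to E^{r+1}_{n+1, 0} \to E^r_{n+1, 0} \to \im(d^r) \to 0.\]
The image of $d^r$ is a subquotient of $E^2_{n+1-r, r-1}$, which for $r \leq n$ is finitely generated by the preliminary observation, and for $r = n+1$ is a subquotient of $E^2_{0, n}$, finitely generated by hypothesis. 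Descending as before yields that $\Hom_{n+1}(Q) = E^2_{n+1, 0}$ is finitely generated. The whole argument is careful bookkeeping with the spectral sequence; no step demands a genuine computation, but one must notice that the extra hypothesis in each direction is precisely what is needed to control the $r = n+1$ step, since all shorter differentials are handled by the $\wFP_{n-1}$/$\FP_n$ assumptions on $N$ and $Q$.
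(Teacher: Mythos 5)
Your argument is correct and follows essentially the same route as the paper's: the LHS spectral sequence, finite generation of $E^2_{pq}$ for $p\leq n$ and $q\leq n-1$, and a page-by-page chase along the two corner chains $E^r_{n+1,0}$ and $E^r_{0,n}$ linked by the differential $d^{n+1}$, with the $\wFP_n$ (resp.\ $\wFP_{n+1}$) hypothesis on $\G$ controlling the $E^\infty$ term at $(0,n)$ (resp.\ $(n+1,0)$). The only differences are cosmetic --- the paper records ``if and only if'' statements at each page whereas you propagate finite generation in one direction via subquotients and extensions --- plus one small imprecision worth noting: the standard fact is that $\Hom_p(Q,M)$ is finitely generated for $p\leq n$ when $Q$ is of type $\FP_n$ and $M$ is finitely generated \emph{as an abelian group} (not merely as a $\Z Q$-module, for which the claim can fail), but since $\Hom_q(N)$ is finitely generated as an abelian group for $q\leq n-1$ by the $\wFP_{n-1}$ hypothesis, your preliminary observation stands.
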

\begin{proof}
  As before, consider the LHS spectral sequence
  \[E^2_{pq}=\Hom_p(Q,\Hom_q(N))\abut \Hom_{p+q}(\Gamma).\]
  The assumptions on $N$ and $Q$ imply that
  $E^2_{pq}$ is finitely generated for
  $q\leq n-1$ and $p\leq n$. 
  
  Let us first focus
  on the entry at position $(n+1,0)$ in the
  spectral sequence. On the $E^{r+1}$-page,
  $E^{r+1}_{n+1,0}$ is the kernel of the map
  $d_r:E^r_{n+1,0}\to E^r_{n+1-r,r-1}$. The codomain
  of this map is finitely generated for $2\leq r\leq n$
  and thus $E^{r+1}_{n+1,0}=\ker d_r$ is finitely generated
  if and only if $E^r_{n+1,0}$ is. Hence, by induction,
  $E^{n+1}_{n+1,0}$ is finitely generated if and only if
  $E^2_{n+1,0}=\Hom_{n+1}(Q,\Z)$ is.
  
  Similarly, looking at the entry $(0,n)$ we find that
  $E^{r+1}_{0,n}$ is the cokernel of the map
  $d_r:E^r_{r,n-r+1}\to E^r_{0,n}$. The groups
  $E^r_{r,n-r+1}$ are finitely generated for
  $2\leq r\leq n$ and thus
  $E^{r+1}_{0,n}=\coker d_r$ is finitely generated
  if and only if $E^r_{0,n}$ is. By induction,
  $E^{n+1}_{0,n}$ is finitely generated if and only if
  $E^2_{0,n}=\Hom_0(Q,\Hom_n(N))$ is.
  
  On the $E^{n+1}$-page we have a map
  \begin{equation}
    d_{n+1}:E^{n+1}_{n+1,0}\to E^{n+1}_{0,n} \label{eq:the_d}
  \end{equation}
  The cokernel of this map is $E^{n+2}_{0,n}=E^{\infty}_{0,n}$,
  which is finitely generated, since $\G$ is of type $\wFP_n$.
  Thus, if $E^{n+1}_{n+1,0}$ is finitely generated,
  then so is $E^{n+1}_{0,n}$.
  
  If $\G$ is of type $\wFP_{n+1}$, then the kernel of
  \eqref{eq:the_d}, $E^{n+2}_{n+1,0}=E^{\infty}_{n+1,0}$ is
  finitely generated as well.
  So in this case, $E^{n+1}_{n+1,0}$ is finitely
  generated \emph{if and only if} $E^{n+1}_{0,n}$ is.
\end{proof}

Now we can prove the desired ``weak'' version of the
$n$-$(n+1)$-$(n+2)$ Conjecture.

\begin{thm}[The Weak $n$-$(n+1)$-$(n+2)$ Theorem]
  Let\label{thm:weakn12}
  \begin{gather*}
    \ses[\pi_1]{N_1}{\G_1}Q\\
    \ses[\pi_2]{N_2}{\G_2}Q
  \end{gather*}
  be short exact sequences with $N_1$ of type $\wFP_n$,
  $\G_1$ of type $\wFP_{n+1}$, $\G_2$ of type $\FP_{n+1}$,
  and $Q$ of type $\FP_{n+2}$. Then the associated fibre product
  is of type $\wFP_{n+1}$.
\end{thm}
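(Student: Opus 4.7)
The plan is to apply Lemma \ref{lem:norm_wfpn2} (with $n$ shifted to $n+1$) to the short exact sequence $\ses{N_1}P{\G_2}$ that comes from the fibre-product diagram. Two of the three hypotheses are immediate: $\G_2$ is of type $\FP_{n+1}$ and $N_1$ is of type $\wFP_n$ by assumption. Everything therefore hinges on the third hypothesis, namely that $\Hom_0(\G_2,\Hom_{n+1}(N_1))$ is finitely generated, where $\G_2\isom P/N_1$ acts on $N_1$ by conjugation in $P$.

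The key observation is that the $\G_2$-action on $\Hom_{n+1}(N_1)$ factors through $Q=P/(N_1\times N_2)$. Indeed, by the computation $(1,n_2)(m,1)(1,n_2)^{-1}=(m,1)$, conjugation by $1\times N_2\leq P$ is trivial on $N_1\times 1$, while conjugation by $N_1\times 1$ is inner on $N_1$ and hence trivial on its homology. Consequently
\[\Hom_0(\G_2,\Hom_{n+1}(N_1))=\Hom_0(Q,\Hom_{n+1}(N_1)),\]
and I would then apply Lemma \ref{lem:norm_wfpn} (also with $n$ shifted to $n+1$) to the parallel sequence $\ses{N_1}{\G_1}Q$. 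All four of its hypotheses are in place: $N_1$ is $\wFP_n$, $\G_1$ is $\wFP_{n+1}$, $Q$ is $\FP_{n+1}$, and $\Hom_{n+2}(Q)$ is finitely generated (the last two are consequences of $Q$ being $\FP_{n+2}$). Lemma \ref{lem:norm_wfpn} then yields that $\Hom_0(Q,\Hom_{n+1}(N_1))$ is finitely generated, and Lemma \ref{lem:norm_wfpn2} produces the finite generation of $\Hom_k(P,\Z)$ for $k\leq n+1$.

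To promote this to the full $\wFP_{n+1}$ conclusion I must verify finite generation of $\Hom_k(P',\Z)$ for $k\leq n+1$ and every finite index $P'\leq P$. Applying Lemma \ref{subdfibre} to the subdirect product $P'\leq p_1(P')\times p_2(P')$ exhibits $P'$ as a fibre product over a common quotient $R:=P'/((N_1\cap P')\cdot(N_2\cap P'))$. Now $R$ has a finite normal subgroup $(P'\cap(N_1\times N_2))/((N_1\cap P')\cdot(N_2\cap P'))$ (finite because both numerator and denominator have finite index in $N_1\times N_2$) with quotient a finite index subgroup of $Q$; hence $R$ remains of type $\FP_{n+2}$, and the restricted groups $N_i\cap P'$ and $p_i(P')$ retain the $\wFP$ and $\FP$ hypotheses, since these pass to finite index subgroups. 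The same conjugation computation shows that the $p_2(P')$-action on $\Hom_{n+1}(N_1\cap P')$ factors through $R$, so the argument of the first two paragraphs applies verbatim to $P'$. The principal obstacle is precisely the verification that the conjugation action factors through $Q$ (resp.\ $R$); once this point is isolated, the two lemmas from Section \ref{subsec:homfin} combine straightforwardly.
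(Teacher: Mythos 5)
Your proof is correct and follows essentially the same route as the paper's: apply Lemma \ref{lem:norm_wfpn2} to $\ses{N_1}{P}{\G_2}$, reduce the needed finite generation of $\Hom_0(\G_2,\Hom_{n+1}(N_1))$ to $\Hom_0(Q,\Hom_{n+1}(N_1))$ by the conjugation computation showing $N_1\times N_2$ acts trivially, and supply the latter from Lemma \ref{lem:norm_wfpn} applied to $\ses{N_1}{\G_1}{Q}$; the only cosmetic difference is that the paper disposes of the finite-index reduction first, whereas you do it last (and with slightly different bookkeeping for showing $R\cong Q'$ is $\FP_{n+2}$, both of which are fine).
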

\begin{proof}
  Denote the fibre product by $P$ and the projection homomorphisms
  by $p_1:P\to\G_1$ and $p_2:P\to\G_2$. 
  
  We need to show that $\Hom_k(P')$ is finitely generated for all
  $k\leq n+1$ and all finite index subgroups $P'\leq P$.
  First we will show that it is in fact enough to prove that $H_k(P)$
  is finitely generated for $k\leq n+1$.
  
  Let $P'\leq P$ be a subgroup
  of finite index and set $\G_1':=p_1(P)$ and $\G_2':=p_2(P)$. Then
  $P'$ is itself a subdirect product of $\G_1'$ and $\G_2'$. The associated
  short exact sequences are
  \begin{gather*}
    \ses[\pi_1]{N_1'}{\G_1'}{Q'}\\
    \ses[\pi_2]{N_2'}{\G_2'}{Q'}
  \end{gather*}
  where $N_1':=P'\cap\G_1$, $N_2':=P'\cap\G_2$ and $Q'=P'/(N_1'\times N_2')$.
  Now the $\G_i'$ and $N_i'$ are finite index subgroups in
  $\G_i$ and $N_i$ respectively, so they share the same finiteness properties.  
  Furthermore $Q'\isom\G_1'/N_1'$ is a finite index subgroup
  of $\G_1/N_1'$ which fits in a short exact sequence
  \[\ses{\quot{N_1}{N_1'}}{\quot{\Gamma_1}{N_1'}}{Q}.\]
  Here $N_1/N_1'$ is finite and $Q$ is of type $\FP_{n+2}$, so
  $\G_1/N_1'$ is of type $\FP_{n+2}$ as well.
  
  Thus we have shown that any finite index subgroup
  $P'\leq P$ is itself a fibre product associated to two short exact
  sequences satisfying the same finiteness conditions as
  those in the statement of the theorem. Without loss of generality,
  it therefore suffices to show that $\Hom_k(P,\Z)$ is finitely generated
  for all $k\leq n$.
  
  Consider first the short exact sequence
  \[\ses[p_1]{N_1}{P}{\G_2}.\]
  Here, $\G_2$ is of type $\FP_{n+1}$ and $N_1$
  is of type $\wFP_n$. By Lemma \ref{lem:norm_wfpn2}, if we can
  show that $\Hom_0(\G_2,\Hom_{n+1}(N_1))$ is finitely generated,
  it follows that $\Hom_k(P)$ is finitely generated for $k\leq n$.
  
  Now consider the short exact sequence
  \[\ses[\pi_1]{N_1}{\G_1}Q.\]
  Here, we can apply Lemma \ref{lem:norm_wfpn}
  to conclude that $\Hom_0(Q,{\Hom_{n+1}(N_1)})$ is finitely generated.
  What remains then is to compare $\Hom_0({\G_2},{\Hom_{n+1}(N_1)})$
  with $\Hom_0(Q,{\Hom_{n+1}(N_1)})$, and we will show
  that these two modules are in fact isomorphic, finishing the proof.
  
  Recall that for any group $\G$ and $\G$-module $A$,
  the $0$\textsuperscript{th} homology group $\Hom_0(\G,A)$ is the module of coinvariants
  \[\Hom_0(\G,A)=A_\G=\quot{A}{\sgp{\{a-\g a\with a\in A, \g\in\G\}}}.\]
  Let $\g_2\in\G_2$. The action of $\G_2$ on $H_{n+1}(N_1)$ is induced by the
  conjugation action of $P$ on $N_1$. More precisely,
  given $\g_2\in\G_2$, pick a lift $\g=(\g_1,\g_2)\in P$. Then
  the action of $\g_2$ on $H_{n+1}(N_1)$ is given by
  the map $c^\g_*:H_{n+1}(N_1)\to H_{n+1}(N_1)$
  induced by the conjugation homomorphism
  $c^\g:N_1\to N_1$. Conjugating an
  element of $N_1=N_1\times 1$ by $\g$ has the same effect
  as conjugating by $\g_1$ in $\G_1$:
  \[c^\g(n)=(\g_1,\g_2)(n,1)(\g_1,\g_2)^{-1}=(\g_1n\g_1^{-1},1).\]
  Therefore $c^\g=c^{\g_1}$. Now set $q:=\pi_1(\g_1)=\pi_2(\g_2)$. 
  By definition of the $Q$-action on
  $H_{n+1}(N_1)$ (stemming from the short exact sequence $\ses{N_1}{\G_1}Q$), we have
  \[qa=c^{\g_1}_*(a)=c^\g_*(a)=\g_2 a,\quad\text{for all $a\in H_{n+1}(N_1)$}.\]
  Since any $q\in Q$ lifts to some $\g_2\in\G_2$, we obtain that
\end{proof}

Since the $n$-$(n+1)$-$(n+2)$ Conjecture implies the Virtual Surjections Conjecture,
it should come as no surprise that Theorem \ref{thm:weakn12} implies
a weak variant of the Virtual Surjections Conjecture. The proof
follows closely that of Theorem \ref{thm:implication}.

\begin{cor}[The Weak Virtual Surjections Theorem]
  Let $k\geq 2$. Let $\G_1,\dots,\G_n$ be groups of type $\FP_k$ and $P\leq\G_1\times\dots\times\G_n$
  a subgroup of their direct product. If $P$ virtually surjects
  to $k$-tuples of factors then $P$ is of type $\wFP_k$.\label{cor:weakvsc}
\end{cor}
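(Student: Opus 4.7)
The plan is to adapt the argument of Theorem \ref{thm:implication} to the homological setting, substituting Theorem \ref{thm:weakn12} for the $n$-$(n+1)$-$(n+2)$ Conjecture at the crucial step. As before, I would proceed by double induction, outermost on $k\geq 2$ and innermost on $n$. At the outset I would reduce to the case where $P$ is subdirect, by replacing each $\G_i$ with $p_i(P)$; the latter is a finite-index subgroup of $\G_i$, hence also of type $\FP_k$.

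For fixed $k$, the inner induction on $n$ has the immediate base case $n\leq k$: then $P$ has finite index in $\G_1\times\dots\times\G_n$, which is of type $\FP_k$ as a direct product of $\FP_k$ groups, so $P$ is itself of type $\FP_k$, hence $\wFP_k$. For $n>k$, set $T:=p_{1,\dots,n-1}(P)$, so that $P$ arises as the fibre product associated to $\ses{N_{1,\dots,n-1}}{T}{Q}$ and $\ses{N_n}{\G_n}{Q}$. Since $T$ is itself a subdirect product in $\G_1\times\dots\times\G_{n-1}$ which still virtually surjects to $k$-tuples, the inner inductive hypothesis gives that $T$ is of type $\wFP_k$.

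Lemma \ref{lem:vskernel} shows that $N_{1,\dots,n-1}$ virtually surjects to $(k-1)$-tuples in $\G_1\times\dots\times\G_{n-1}$. If $k\geq 3$, the outer inductive hypothesis yields that $N_{1,\dots,n-1}$ is of type $\wFP_{k-1}$. For the base case $k=2$ I would invoke Proposition \ref{prop:vs1} directly: each quotient $p_i(N_{1,\dots,n-1})/(N_{1,\dots,n-1}\cap\G_i)$ equals $p_i(N_{1,\dots,n-1})/N_i$, which is a subgroup of the virtually nilpotent group $\G_i/N_i$ (virtual nilpotency of the latter being supplied by Lemma \ref{lem:nilp} applied to $P$ with $k=2$) and is hence itself virtually nilpotent; Proposition \ref{prop:vs1} then gives finite generation of $N_{1,\dots,n-1}$, which in turn yields $\wFP_1$, since finite-index subgroups of finitely generated groups are finitely generated and hence have finitely generated abelianisation.

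Finally, $Q\cong\G_n/N_n$ is finitely generated and virtually nilpotent by Lemma \ref{lem:nilp}, so it is of type $\Fin_\infty$ and in particular $\FP_{k+1}$. All four hypotheses of Theorem \ref{thm:weakn12} (applied with parameter $n=k-1$) are thus in place, with $N_1=N_{1,\dots,n-1}$ of type $\wFP_{k-1}$, $\G_1=T$ of type $\wFP_k$, $\G_2=\G_n$ of type $\FP_k$, and $Q$ of type $\FP_{k+1}$; the theorem then yields that $P$ is of type $\wFP_k$, closing the induction. I do not foresee any significant obstacle, as the argument is a direct transcription of the proof of Theorem \ref{thm:implication}; the only point deserving mild care is the $k=2$ base case, where the $\wFP_1$ property of $N_{1,\dots,n-1}$ has to be established by hand via Proposition \ref{prop:vs1} rather than by appeal to the corollary itself, whose statement is restricted to $k\geq 2$.
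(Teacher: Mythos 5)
Your proposal is correct and follows essentially the same double-induction argument as the paper, which likewise reduces to the subdirect case, applies Lemma \ref{lem:vskernel} and Lemma \ref{lem:nilp}, and feeds the resulting data into Theorem \ref{thm:weakn12}. The only (immaterial) difference is that the paper packages the outer induction as a Claim valid for all $k\geq 1$ with an explicit virtual-nilpotency hypothesis, using $k=1$ (via Proposition \ref{prop:vs1}) as the base case, whereas you start at $k=2$ and handle the needed $\wFP_1$ property of $N_{1,\dots,n-1}$ by the same direct appeal to Proposition \ref{prop:vs1}.
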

\begin{proof}
  We will use the notation established at the end of Chapter \ref{sec:fibreproducts}.
  As in Theorem \ref{thm:implication} we prove the following claim by induction on $k$:
  \begin{unnumclaim}
    Let $k\geq 1$. Let $\G_1,\dots,\G_n$ be groups of type $\FP_k$ and $P\leq\G_1\times\dots\times\G_n$
    a subgroup of their direct product such that $P/(P\cap\G_i)$ is virtually
    nilpotent for all $i$ with $1\leq i\leq n$. Then $P$ is of type $\wFP_k$.
  \end{unnumclaim}
  For $k\geq 2$ the extra assumption on the $P/(P\cap\G_i)$ is redundant,
  by \ref{lem:nilp}, so the Claim does imply the statement of the Corollary.
  Furthermore we may assume, by the same argument as used in Section \ref{sec:vsc}
  (see Proposition \ref{prop:vs1}), that $P$ is a subdirect product, replacing
  each $\G_i$ by its finite index subgroup $p_i(P)$, if necessary.
  
  In the base case $k=1$, the groups $\G_1,\dots,\G_n$ are $\FP_1$ and thus finitely generated.
  So from Proposition \ref{prop:vs1} we have that $P$ is finitely generated and thus a fortiori
  of type $\wFP_1$.
  
  Fix a $k\geq 2$. We prove the Claim for this $k$ by induction on $n$.
  If $n\leq k$ then $P$ is of finite index in $\G_1\times\dots\times\G_n$
  and thus of type $\FP_k$. 
  
  So let $n>k$. Set $T:=p_{1,\dots,n-1}(P)\leq\G_1\times\dots\times\G_{n-1}$.
  Then $P$ is the fibre product associated to the short exact sequences
  \begin{alignat*}{2}
  \ses{N_{1,\dots,n-1}}{&\,T&}Q\\
  \ses{N_n}{&\,\G_n\,&}Q
  \end{alignat*}
  Here, $T\leq \G_1\times\dots\times\G_{n-1}$ is a subdirect product, which
  also virtually surjects to $k$-tuples, since $p_J(T)=p_J(P)$
  for all $J\subset\{1,\dots,n-1\}$. By induction, we can assume that
  $T$ is of type $\wFP_k$.
  
  By Lemma \ref{lem:vskernel}, $N_{1,\dots,n-1}\leq\G_1\times\dots\times\G_{n-1}$
  virtually surjects to $(k-1)$-tuples. In addition,
  \[\quot{\G_i}{(N_{1,\dots,n-1}\cap\G_i)}=\quot{\G_i}{P\cap\G_i}\]
  is virtually nilpotent for all $i\in\{1,\dots,n-1\}$. Thus, by the inductive assumption on $k$, 
  we can assume that $N_{1,\dots,n-1}$ is of type 
  $\wFP_{k-1}$.
  
  Furthermore, by assumption, $Q\isom\G_n/N_n$ is virtually nilpotent
  and finitely generated and therefore of type $\Fin_\infty$, so certainly
  of type $\wFP_2$.
  We can then apply Theorem \ref{thm:weakn12} to conclude that $P$ is of type $\wFP_k$, finishing
  the induction.
\end{proof}

Kochloukova in \cite{Kochloukova} proved a converse to this theorem
in the case where the factors are non-abelian free (or limit) groups, which we quoted
earlier (in a weaker form) as Theorem \ref{thm:KochloukovasTheorem}. Her
theorem states \cite[Theorem 11]{Kochloukova}:
\begin{thm}
  Let $\G_1,\dots,\G_n$ be non-abelian limit groups
  and $P\leq\G_1\times\dots\times\G_n$ a full subdirect product.
  If $P$ is of type $\wFP_k$ (for some $k\geq 2$) then $P$ virtually surjects to $k$-tuples.\qed
\end{thm}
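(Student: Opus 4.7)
My plan is to prove the contrapositive by induction on $n$. Suppose $P\leq \G_1\times\dots\times\G_n$ is a full subdirect product of non-abelian limit groups which does \emph{not} virtually surject to $k$-tuples; the goal is to exhibit a finite-index subgroup of $P$ whose integral homology is not finitely generated in some degree $\leq k$, contradicting type $\wFP_k$.

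First I would select a minimal bad subset $J\subset\{1,\dots,n\}$ of cardinality $j\leq k$ such that $p_J(P)$ has infinite index in $\G_J$. By minimality, for every proper $J'\subsetneq J$ the projection $p_{J'}(P)$ has finite index in $\G_{J'}$; by replacing the $\G_i$ for $i\in J$ with appropriate finite-index subgroups I can assume these smaller projections are onto. Set $H:=p_J(P)$: this is a full subdirect product (fullness of $P$ is inherited by $H$ since the intersections $P\cap\G_i$ for $i\in J$ inject into $H$) inside a product of $j\leq k$ non-abelian limit groups, of infinite index, and we only need to produce non-finite-generation of some low-degree homology for a finite-index subgroup of $H$ --- and then transfer that non-finite-generation back up to $P$.

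Next I would relate the $\wFP_k$ condition on $P$ to the homology of $H$. Running the LHS spectral sequence for
\[1\to P\cap\G_{J^c}\to P\to H\to 1\]
and exploiting the induction hypothesis applied to the intermediate projections $p_I(P)$ with $J\subsetneq I\subsetneq\{1,\dots,n\}$ (each of which, by minimality of $J$, still virtually surjects to $k$-tuples among its own factors, hence is of type $\wFP_k$ by Corollary \ref{cor:weakvsc}), together with the fact that products of non-abelian limit groups are of type $\Fin_\infty$, I would deduce sufficient finiteness of $P\cap\G_{J^c}$ to force $H$ itself to be of type $\wFP_k$. The bookkeeping here mirrors the inductive argument in the proof of Corollary \ref{cor:weakvsc}, replacing virtual surjection by homological finiteness at each step.

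The heart of the argument is then the base case: an infinite-index full subdirect product $H\leq\G_1\times\dots\times\G_j$ ($j\leq k$) of non-abelian limit groups cannot be of type $\wFP_k$. Here the specific geometry of limit groups is essential. Because $H$ has infinite index and $\G_J$ is finitely generated, after a finite-index refinement one produces a nonzero character $\chi:\G_J\to\mathbb{R}$ vanishing on $H$. The Bieri-Neumann-Strebel-Renz invariants $\Sigma^m$ of $\G_J$ are computable from those of the factors $\G_i$ via Bieri-Renz's product formula, and for a non-abelian limit group $\Sigma^\infty$ is known to be empty, so $\chi$ lies outside $\Sigma^k(\G_J)$. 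A $\Sigma$-criterion for $\wFP_k$ then translates this exclusion into non-finite-generation of $H_m(H',\Z)$ for some $m\leq k$ and some finite-index $H'\leq H$. The hard part --- and the true content of Kochloukova's argument --- is exactly this last step: converting the geometric/combinatorial $\Sigma$-invariant calculation into honest non-finite-generation of an integral homology group, which requires a careful refinement of the Bieri-Strebel-Renz machinery adapted to the limit group setting.
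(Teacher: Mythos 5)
First, a point of orientation: the paper does not prove this statement at all --- it is quoted from Kochloukova \cite[Theorem 11]{Kochloukova} and stamped with a \qed, so there is no in-paper argument to measure your proposal against; the benchmark is Kochloukova's own proof. Your sketch captures the right general shape (reduce to a minimal bad tuple $J$, produce a character killed by the projection, use $\Sigma$-theory to contradict homological finiteness), but as written it has gaps that go beyond the one you acknowledge, and at least two of them would sink the argument in the form stated.

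The first is the claim that $H=p_J(P)$ inherits type $\wFP_k$ from $P$ via the LHS spectral sequence of $1\to P\cap\G_{J^c}\to P\to H\to 1$. Finiteness properties do not pass to quotients, and the spectral sequence runs the wrong way here: to recover finite generation of $E^2_{p,0}=\Hom_p(H,\Z)$ from finite generation of the abutment $\Hom_*(P)$ you need strong finiteness of the kernel as input (roughly, that $\Hom_q(P\cap\G_{J^c})$ is finitely generated as an $H$-module in the relevant range), and that is precisely what is unavailable: $P\cap\G_{J^c}$ contains the product of the nontrivial normal subgroups $P\cap\G_i$ ($i\notin J$) of limit groups, which are not even finitely generated unless they have finite index. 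Your inductive hypothesis controls the intermediate \emph{projections} $p_I(P)$, not these kernels, so it does not repair this step. The second gap is the existence of a nonzero character $\chi$ of (a finite-index subgroup of) $\G_J$ vanishing on $H$: an infinite-index subgroup of a finitely generated group need not lie in the kernel of any character. Producing $\chi$ requires knowing that the relevant quotient is infinite with infinite abelianisation; in this setting that comes from the virtually nilpotent quotients lemma (Lemma \ref{lem:nilp}), which needs virtual surjection to pairs --- guaranteed by minimality of $J$ only when $|J|\geq 3$ --- plus a further argument in the case $|J|=2$. Finally, even granting $\chi$, the two steps you lean on at the end are exactly the hard content: showing $[\chi]\notin\Sigma^k(\G_J)$ needs the \emph{converse} of Meinert's inequality (the paper only records the easy inclusion, and the full product formula for the $\Sigma$-invariants is delicate), and Theorem \ref{thm:sigma} as stated converts $\Sigma$-membership into type $\Fin_n$ of the \emph{full kernel} of a map to an abelian group, whereas you need non-finite-generation of the integral homology of $H$ itself, which is neither that kernel nor covered by the $\Fin$-version of the criterion. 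You flag this last conversion as ``the true content,'' which is honest, but together with the two earlier gaps it means the proposal is a plan for locating Kochloukova's proof rather than a proof.
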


So in conjunction with Corollary \ref{cor:weakvsc} this gives a
complete characterisation of the full subdirect products of type $\wFP_k$
of free groups:
\begin{cor}
  Let $\G_1,\dots,\G_n$ be non-abelian limit groups
  and $P\leq\G_1\times\dots\times\G_n$ a full subdirect product.
  Then $P$ is of type $\wFP_k$ (for some $k\geq 2$) if and only if $P$ virtually surjects to $k$-tuples.\qed
  \label{cor:wFPkCharacterisation}
\end{cor}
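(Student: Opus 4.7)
The proof is essentially a bookkeeping argument combining two results that are already available in the paper: Corollary \ref{cor:weakvsc} (The Weak Virtual Surjections Theorem) and Kochloukova's \cite[Theorem 11]{Kochloukova}, which is quoted immediately above the corollary. So my plan is to verify that the hypotheses of each match the setup of the corollary and then simply assemble the two implications.

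For the forward direction, assuming $P$ virtually surjects to $k$-tuples, I would like to invoke Corollary \ref{cor:weakvsc}. That corollary requires the ambient factors $\G_i$ to be of type $\FP_k$. Since limit groups are of type $\Fin_\infty$ (they have finite classifying complexes; see e.g.\ the standard references cited in the Introduction), in particular they are of type $\FP_k$ for every $k$. Thus Corollary \ref{cor:weakvsc} applies verbatim and yields that $P$ is of type $\wFP_k$. No non-abelianness or full-subdirectness hypothesis is actually needed for this direction.

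For the reverse direction, assuming $P$ is of type $\wFP_k$, I would appeal directly to the statement of Kochloukova's theorem displayed just before the corollary. This is where the hypotheses that $\G_1,\dots,\G_n$ are non-abelian limit groups and that $P$ is a full subdirect product are actually used; without them the implication fails (e.g., taking $P$ to be trivial). So the assumptions in the corollary are exactly tailored to make both implications simultaneously available.

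There is no real obstacle: the work has already been done in proving Corollary \ref{cor:weakvsc} (whose derivation from Theorem \ref{thm:weakn12} was the substantive content of this section) and in Kochloukova's paper. The only thing to check carefully is that I have not accidentally strengthened one of the hypotheses---in particular that ``non-abelian limit group'' is enough to obtain $\FP_k$ for the forward direction (it is, since limit groups are $\Fin_\infty$) and that ``full subdirect product'' is the precise assumption appearing in Kochloukova's statement (it is, since that is how it is quoted).
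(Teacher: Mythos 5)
Your proposal is correct and is exactly the paper's argument: the corollary is stated with an immediate \qed precisely because it is the conjunction of Corollary \ref{cor:weakvsc} (one implication) and Kochloukova's Theorem 11 as quoted just above it (the other implication). Your added observation that limit groups are of type $\Fin_\infty$, hence $\FP_k$, correctly supplies the hypothesis needed to apply Corollary \ref{cor:weakvsc}.
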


Of course, the conjecture (which
would follow from the Virtual Surjections Conjecture) is that
``$\wFP_k$'' in this statement can be replaced by ``$\Fin_k$'',
which would imply that these two finiteness properties are equivalent (for $k\geq 2$)
among the class of subgroups of direct products
of free groups.

\section{Abelian quotients}
\label{sec:abelian}

We have found in Section \ref{sec:vsc} that there is particular interest in resolving the
$n$-$(n+1)$-$(n+2)$ Conjecture in the case where the quotient
$Q$ is nilpotent, as that would suffice to establish the
Virtual Surjections Conjecture. One might try then, as a first step towards that goal, 
to prove the conjecture in
the case where the quotient $Q$ is abelian, hoping to
then establish the nilpotent case by induction on the nilpotency class.
We will prove in this chapter the abelian case, followed by an analysis
of the difficulties one comes up against in a potential induction argument.

The case of abelian $Q$ is in fact quite special. Recall the 
standard setup: We are given two short exact
\begin{gather*}
\ses[\pi_1]{N_1}{\Gamma_1}Q\\
\ses[\pi_2]{N_2}{\Gamma_2}Q
\end{gather*}
where $\Gamma_1$ and $\Gamma_2$ are of type $\Fin_{n+1}$,
$N_1$ is of type $\Fin_n$ and $Q$ now is a finitely
generated abelian group (and therefore in particular
of type $\Fin_\infty$). The fibre product
\[P=\{(\gamma_1,\gamma_2)\in\Gamma_1\times\Gamma_2\with\pi_1(\gamma_1)=\pi_2(\gamma_2)\}\]
is then the kernel of a homomorphism to an abelian group, namely
\[\Gamma_1\times\Gamma_2\to Q,(\gamma_1,\gamma_2)\mapsto \pi_1(\gamma_1)-\pi_2(\gamma_2).\]

This innocuous observation simplifies matters enormously: 
While there is a rather short supply of generally applicable 
tools to answer the question which subgroups of a group
of type $\Fin_n$ are themselves of this type, there is
one large exception. That is the case where the subgroup in question
is the kernel of a homomorphism to an abelian group. For here, the
theory of $\Sigma$-invariants can be brought to bear.

\subsection{$\Sigma$-invariants, Meinert's Inequality and the abelian $n$-$(n+1)$-$(n+2)$ Conjecture} 

We content ourselves here with a brief summary of results
from $\Sigma$-theory pertinent to our discussion. The invariants were
first introduced in the context of metabelian group by Bieri and Strebel
in \cite{BieriStrebel} and later generalised in \cite{BNS,BieriRenz,BieriGeoghegan}.
A general reference is \cite{BieriStrebelBook}.
The higher-dimensional topological invariants used here were introduced
in \cite{RenzThesis}. For a survey, see \cite{BieriSurvey,BieriFinitenessLength}.

Consider for any group $\G$ of type $\Fin_n$ the set $\Hom(\G,\R)\setminus\{0\}$
of non-trivial homomorphisms from $\G$ to the additive reals (the ``characters'' of $\G$).
We take two such homomorphisms $\chi_1,\chi_2:\G\to\R$
to be equivalent if they only differ by a positive
real factor, i.e.\ $\chi_1=c\chi_2$ for some $c>0$.
The set of equivalence classes is called the character sphere $S(\G)$ (note that
$\Hom(\G,\R)$ is a finite-dimensional real vector space if $\G$ is finitely
generated, so $S(\G)$ is a topological sphere).

We can then associate to $\G$ a chain of invariants
\[S(\G)\supset\Sigma^1(\Gamma)\supset\Sigma^2(\Gamma)\supset\dots\supset\Sigma^n(\Gamma),\]
the $\Sigma$-\ or Bieri-Neumann-Strebel-Renz invariants of $\Gamma$.

The $\Sigma$-invariants describe certain connectivity properties
of $\E\G$-complexes and are related to a variety of
structural properties of the group, for which we refer the reader to the sources cited above.
For us, the utility of these invariants lies in the fact
that they contain all the information on the higher finiteness
properties of the normal
subgroups of $\Gamma$ with abelian quotient.
The following theorem is due to Renz \cite{RenzThesis,RenzSingapore}
(see \cite{BieriRenz} for a proof of the analogous result for the homological 
invariant or \cite{BieriGeoghegan} for a proof in a much more general setting).

\begin{thm}
  Let $\Gamma$ be a group of type $\Fin_n$ and $N\nsub\Gamma$
  a normal subgroup with $\Gamma/N$ abelian. Then $N$ is
  of type $\Fin_n$ if and only if
  \[S(G,N):=\{[\chi]\in S(G)\with \chi(N)=0\}\]
  is contained in $\Sigma^n$.\qed\label{thm:sigma}
\end{thm}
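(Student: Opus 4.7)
The plan is to realise the $\Sigma^n$-condition topologically through a filtration of a classifying complex for $\G$ and then to identify it with the $\Fin_n$ property of $N$ via a Brown-style cocompactness argument that exploits the fact that $\G/N$ is abelian. First I would choose a $\B\G$-complex $Y$ with finite $n$-skeleton, pass to its universal cover $\tilde Y$, and consider the intermediate cover $\bar Y := \tilde Y / N$, which is a $\B N$-complex. Writing $A := \G/N$, the quotient acts freely on $\bar Y$ with $\bar Y / A = Y$, so $\bar Y$ is an $A$-CW-complex whose $n$-skeleton is $A$-cocompact.

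The central reformulation to establish is a Brown-type criterion: $N$ is of type $\Fin_n$ if and only if there exists an $A$-cocompact subcomplex $Z \subset \bar Y$ that is $(n-1)$-connected and whose $n$-skeleton equals $\skel{\bar Y}{n}$. Granting this, the task reduces to showing how the hypothesis $S(\G,N) \subset \Sigma^n(\G)$ produces such a $Z$, and conversely how the existence of $Z$ forces each $[\chi] \in S(\G,N)$ into $\Sigma^n(\G)$.

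The translation between the two sides is made via height functions. Any character $\chi:\G\to\R$ with $\chi(N)=0$ descends to $\bar\chi:A\to\R$, which extends $A$-equivariantly to a continuous height map $v_\chi : \bar Y \to \R$. The superlevel sets
\[
\bar Y^\chi_{\geq t} := v_\chi^{-1}\bigl([t,\infty)\bigr)
\]
form an exhaustive $A$-equivariant filtration, and the standard topological definition of $\Sigma^n(\G)$ can be recast as: $[\chi] \in \Sigma^n(\G)$ precisely when this filtration is essentially $(n-1)$-connected, i.e.\ for every $s\in\R$ there is $t \leq s$ so that the inclusion $\bar Y^\chi_{\geq s} \hookrightarrow \bar Y^\chi_{\geq t}$ is null on $\pi_i$ for $i \leq n-1$. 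With this dictionary in hand, if $S(\G,N) \subset \Sigma^n(\G)$ then, using the compactness of $S(\G,N)$ as a sub-sphere of $S(\G)$, one covers it by finitely many characters $\chi_1,\dots,\chi_r$ whose essential $(n-1)$-connectivity data patch together into a finite ``cap" in $\bar Y$ whose $A$-translates are $(n-1)$-connected. The union of these translates yields the required $A$-cocompact $(n-1)$-connected $Z$, whence $N$ is of type $\Fin_n$. The converse is the easier direction: given $Z$, translating it by $A$-elements on which $\bar\chi$ grows produces the essential $(n-1)$-connectivity of each $\chi$-filtration directly.

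The principal obstacle is the compactness-to-uniformity step. It requires that the ``depth of contractibility" within each $\bar Y^\chi_{\geq t}$ vary in a controlled way as $[\chi]$ moves on the compact sphere $S(\G,N)$, so that finite data suffice to build a single cocompact $Z$. Formalising this continuous dependence, and handling the ambiguity in the equivariant choice of $v_\chi$ up to bounded perturbation, is the delicate technical core; one typically works with a simplicial or polyhedral model of $\bar Y$ in which ``height above $t$" behaves well geometrically. Once this finite patching is achieved, the reduction to a Brown-type criterion and the verification of the converse direction are comparatively routine.
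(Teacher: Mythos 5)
The paper does not actually prove this theorem: it is quoted from Renz's thesis, with references to Bieri--Renz for the homological analogue and to Bieri--Geoghegan for a more general version, so your attempt can only be compared with that literature. Your overall architecture --- pass to the intermediate cover $\bar Y=\tilde Y/N$ with deck group $A=\Gamma/N$, realise characters vanishing on $N$ as $A$-equivariant height functions on $\bar Y$, read $\Sigma^n$ as essential $(n-1)$-connectedness of the superlevel filtrations, and use compactness of the great subsphere $S(\Gamma,N)$ to extract uniform data --- is indeed the standard route of Renz, so the plan points in the right direction.

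There are nevertheless two genuine problems. First, your ``Brown-type criterion'' is false as stated: a subcomplex $Z\subset\bar Y$ with $\skel Zn=\skel{\bar Y}{n}$ satisfies $\pi_1(Z)\isom\pi_1(\bar Y)\isom N$, so it cannot be $(n-1)$-connected for $n\geq 2$ unless $N$ is trivial; moreover $A$-cocompactness is automatic for such a $Z$ (because $Y$ has finite $n$-skeleton) and is not the finiteness relevant to $N$. The criterion must be formulated in the universal cover $\tilde Y$ with its $N$-action: one needs an $N$-invariant, $N$-cocompact, $(n-1)$-connected subcomplex of $\skel{\tilde Y}{n}$ (after possibly enlarging within a bounded neighbourhood), or equivalently an essentially $(n-1)$-connected filtration of $\skel{\tilde Y}{n}$ by $N$-cocompact subcomplexes in the sense of Brown. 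Second, and more seriously, the entire content of the theorem lies in the step you explicitly defer: converting the pointwise hypothesis $[\chi]\in\Sigma^n(\Gamma)$ for every $[\chi]\in S(\Gamma,N)$ into a single uniform ``lag'' valid for all directions at once, and then showing that the resulting polyhedral sets $\{y\with v_\chi(y)\geq -j\text{ for all such }\chi\}$ form an essentially $(n-1)$-connected, $N$-cocompact filtration. This requires the openness (or at least a semicontinuity) of $\Sigma^n$ on the character sphere --- itself a nontrivial theorem --- together with a genuine gluing argument to pass from the connectivity of finitely many half-space filtrations to that of their intersection; translates of an $(n-1)$-connected ``cap'' do not automatically have $(n-1)$-connected union. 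As written, the proposal is a correct outline whose decisive steps are missing, and one of whose stated reductions is incorrect as formulated.
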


To apply this theorem to the problem at hand we also need a way of
relating the $\Sigma$-invariants of a direct product to those of the
factor groups. Fortunately, all our needs are served by Meinert's Inequality.
This is due to Meinert (unpublished, based on work in \cite{MeinertDiplom}; for a
proof see \cite[Lemma 9.1]{Gehrke} or the sketch in \cite{BieriFinitenessLength}).

\begin{thm}[Meinert's Inequality]
  Let $\Gamma_1$ and $\Gamma_2$ be groups of type $\Fin_n$ and let
  $\chi:\Gamma_1\times\Gamma_2\to\R$ be a homomorphism
  with both $\chi|_{\Gamma_1}$ and $\chi|_{\G_2}$ non-trivial. If $[\chi|_{\Gamma_1}]\in\Sigma^k(\Gamma_1)$
  and $[\chi|_{\Gamma_2}]\in\Sigma^l(\Gamma_2)$ for some $k,l\geq 1$ with $k+l<n$ then
  $[\chi]\in\Sigma^{k+l+1}(\Gamma_1\times\Gamma_2)$.\qed
\end{thm}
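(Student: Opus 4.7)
The approach I would take is topological, via the Bieri-Renz-Geoghegan description of the $\Sigma$-invariants as essential connectivity of superlevel filtrations. For $i=1,2$, pick a $\B\G_i$-complex $X_i$ with finite $n$-skeleton, let $\tilde X_i$ be its universal cover, and choose a $\G_i$-equivariant (via $\chi_i$) cellular height function $h_i\colon\tilde X_i\to\R$. The hypotheses $[\chi_1]\in\Sigma^k(\G_1)$ and $[\chi_2]\in\Sigma^l(\G_2)$ translate respectively to \emph{essential $(k{-}1)$-connectedness} and \emph{essential $(l{-}1)$-connectedness} of the filtrations $\{\tilde X_i^{h_i\geq t}\}_{t\in\R}$: for every level $t$ there is some $t'\leq t$ such that $\tilde X_i^{h_i\geq t}\hookrightarrow\tilde X_i^{h_i\geq t'}$ is null on $\pi_j$ for $j$ below the respective bound.

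Taking $X_1\times X_2$ as a $\B(\G_1\times\G_2)$-complex with universal cover $\tilde X_1\times\tilde X_2$ and combined height function $h=h_1+h_2$ realising $\chi$, the same criterion reduces the theorem to showing that the family $\{W_s:=h^{-1}[s,\infty)\}$ is essentially $(k+l)$-connected.

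The crux is as follows. Given a spherical class $\alpha\colon S^j\to W_s$ with $j\leq k+l$, compactness produces levels $s_1,s_2\in\R$ with $\alpha(S^j)\subset\tilde X_1^{h_1\geq s_1}\times\tilde X_2^{h_2\geq s_2}$. The projected spheres $\alpha_i\colon S^j\to\tilde X_i^{h_i\geq s_i}$ cannot in general be killed individually, since $j$ may exceed $k-1$ or $l-1$. The classical device is to express $S^j$ as a join $S^{j_1}*S^{j_2}$ with $j_1\leq k-1$ and $j_2\leq l-1$, exploiting $j_1+j_2+1=j\leq k+l$. Writing the join as the union of the two cones $D^{j_1+1}\times S^{j_2}$ and $S^{j_1}\times D^{j_2+1}$ glued along $S^{j_1}\times S^{j_2}$, one restricts $\alpha$ to each cone and uses the essential connectivity of the appropriate factor's filtration to fill in the disks, at the cost of lowering the levels to some $s_1'\leq s_1$ and $s_2'\leq s_2$. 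Gluing the two partial fillings along their common boundary $S^{j_1}\times S^{j_2}$ then yields a null-homotopy of $\alpha$ inside a single product superlevel set contained in $W_{s'}$ with $s'\leq s_1'+s_2'$.

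The main obstacle is precisely this join-gluing step: one must arrange the join decomposition of $S^j$ and the choice of lowered levels $s_i'$ so that the two partial fillings agree cellularly on the common $S^{j_1}\times S^{j_2}$. This is the source of the ``$+1$'' distinguishing Meinert's inequality from the weaker estimate one gets from the $\min(k-1,l-1)$-connectivity of a bare direct product of superlevel sets --- it is the same mechanism by which the join of a $p$-connected and a $q$-connected space is $(p+q+2)$-connected while the product is only $\min(p,q)$-connected. The hypotheses $k,l\geq 1$ and $k+l<n$ enter to guarantee both that each factor filtration provides genuine essential connectivity to draw on and that all cells involved in the gluing live inside the finite $n$-skeleta where the argument is cellularly controlled.
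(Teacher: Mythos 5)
First, a point of comparison: the paper does not prove Meinert's Inequality at all --- it is quoted with a \qed and references to Gehrke's Lemma 9.1 and the sketch in Bieri's survey --- so there is no internal proof to measure your proposal against; I am assessing it on its own terms.

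Your framework is the right one and matches the cited proofs in spirit: the Bieri--Renz/Geoghegan characterisation of $\Sigma^m$ as essential $(m-1)$-connectedness of the superlevel filtration, the product complex with height function $h=h_1+h_2$, and the join mechanism as the source of the extra ``$+1$''. But the step you yourself call the crux contains a genuine gap, and it is not just a matter of arranging the gluing. To kill a sphere of dimension $j\leq k+l$ you propose writing $S^j\cong S^{j_1}*S^{j_2}$ with $j_1\leq k-1$ and $j_2\leq l-1$; but then $j=j_1+j_2+1\leq(k-1)+(l-1)+1=k+l-1$, so no such decomposition exists in the top dimension $j=k+l$. As written, your argument yields only $[\chi]\in\Sigma^{k+l}$ --- exactly the weaker estimate you set out to beat. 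The actual source of the ``$+1$'' is not that \emph{both} cone pieces can be filled simultaneously in their respective factors, but a disjunction: in the Mayer--Vietoris/K\"unneth analysis of the union $\bigcup_t \tilde X_1^{h_1\geq t}\times\tilde X_2^{h_2\geq s-t}$, every bidegree $(a,b)$ with $a+b=j-1\leq k+l-1$ satisfies $a\leq k-1$ \emph{or} $b\leq l-1$, so each summand dies in one factor or the other after lowering the level. Carrying this out requires the homological form of the argument (the hypotheses $k,l\geq 1$ give essential simple connectivity of the superlevel sets, so Hurewicz applies), plus a uniformity check you omit: the levels $s_1,s_2$ produced by compactness depend on the particular sphere, and one needs cocompactness and equivariance to extract a lag $t'\leq t$ depending only on $t$. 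Finally, note that ``filling the two cones and gluing them along $S^{j_1}\times S^{j_2}$'' reassembles a sphere rather than producing a map of $D^{j+1}$; even in dimensions where your decomposition exists, the description does not yet exhibit a null-homotopy.
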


Now we are equipped to give a proof of the $n$-$(n+1)$-$(n+2)$ Conjecture in 
the case where the quotient $Q$ is virtually abelian.

\begin{thm}
  Let
  \begin{gather*}
    \ses[\pi_1]{N_1}{\Gamma_1}Q\\
    \ses[\pi_2]{N_2}{\Gamma_2}Q
  \end{gather*}
  be short exact sequences of groups with $\Gamma_1$ and $\Gamma_2$ of type $\Fin_{n+1}$,
  $Q$ finitely generated virtually abelian, $N_1$ of type $\Fin_k$ and $N_2$ of type $\Fin_l$ for some
  $k,l\geq 0$ with $k+l\geq n$. Then the fibre product associated to these sequences
  is of type $\Fin_{n+1}$.\label{thm:abeliann12}
\end{thm}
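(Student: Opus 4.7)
My plan is to exploit the special feature noted in the introduction to this section: when $Q$ is abelian, $P$ is the kernel of a homomorphism to an abelian group, which brings the Bieri--Neumann--Strebel--Renz $\Sigma$-invariant machinery into play. First I would reduce to the case of $Q$ genuinely abelian by picking a finite-index abelian subgroup $Q^0 \leq Q$ and setting $\G_i^0 := \pi_i^{-1}(Q^0)$, which is a finite-index subgroup of $\G_i$ and hence still of type $\Fin_{n+1}$. The fibre product of the restricted sequences $\ses{N_i}{\G_i^0}{Q^0}$ is $P \cap (\G_1^0 \times \G_2^0)$, of finite index in $P$, so one may assume $Q$ is abelian. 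In this case $P = \ker\phi$ for the homomorphism $\phi: \G_1 \times \G_2 \to Q$, $(\g_1,\g_2) \mapsto \pi_1(\g_1) - \pi_2(\g_2)$. Since $\G_1 \times \G_2$ is of type $\Fin_{n+1}$ and $(\G_1 \times \G_2)/P \isom \im\phi \leq Q$ is abelian, Theorem \ref{thm:sigma} (Renz) reduces the problem to showing $S(\G_1 \times \G_2, P) \subseteq \Sigma^{n+1}(\G_1 \times \G_2)$.

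Next I would analyse an arbitrary non-trivial character $\chi: \G_1 \times \G_2 \to \R$ vanishing on $P$. Its restrictions $\chi_i := \chi|_{\G_i}$ vanish on $N_i$ (since $N_i \subset P$), so each factors as $\chi_i = \bar\chi_i \comp \pi_i$ for a character $\bar\chi_i$ of $Q$. The condition $\chi(P) = 0$ forces $\bar\chi_1 = -\bar\chi_2 =: \bar\chi$, and non-triviality of $\chi$ forces $\bar\chi \neq 0$, so both $\chi_1, \chi_2$ are non-trivial. Applying Renz's Theorem to the individual sequences $\ses{N_i}{\G_i}{Q}$ then gives $[\chi_1] \in \Sigma^k(\G_1)$ and $[\chi_2] \in \Sigma^l(\G_2)$.

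In the principal case $k,l \geq 1$ and $n \geq 2$, I would set $k' := \min(k, n-1)$ and $l' := n - k'$. The hypothesis $k + l \geq n$ ensures $k', l' \geq 1$, $k' + l' = n$, $k' \leq k$, and $l' \leq l$; since $\Sigma^j$ is decreasing in $j$, also $[\chi_i] \in \Sigma^{k_i'}(\G_i)$. Meinert's Inequality, applied to $\G_1, \G_2$ of type $\Fin_{n+1}$ (valid since $k' + l' = n < n+1$ and both restrictions are non-trivial), then delivers $[\chi] \in \Sigma^{k'+l'+1}(\G_1 \times \G_2) = \Sigma^{n+1}(\G_1 \times \G_2)$, and Renz's Theorem closes the argument. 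The low-dimensional cases $n = 0, 1$ are subsumed by the $0$-$1$-$2$ Lemma (Lemma \ref{lem:012}) and the $1$-$2$-$3$ Theorem; when one of $k, l$ vanishes the hypothesis forces the other to be $\geq n$, and the sub-case $l \geq n+1$ follows from the extension $\ses{N_2}{P}{\G_1}$ alone, giving $P$ as an extension of the $\Fin_{n+1}$-group $\G_1$ by the $\Fin_{n+1}$-group $N_2$.

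The main obstacle I foresee is the boundary sub-case $k = 0$, $l = n$ (or its mirror): the strict Meinert formulation quoted in the paper requires both parameters to be positive, so extracting the final bonus dimension there calls either for a more flexible form of Meinert or for a direct use of the abelian structure of $Q$ --- every character killing $P$ is pulled back from a character of $Q$, a stronger constraint than a generic $\Sigma$-invariant input. Apart from this bookkeeping the argument rests entirely on Renz plus Meinert, which is precisely why the abelian case is accessible; adapting it to the nilpotent $Q$ required for Theorem \ref{thm:implication} would demand analogues of these tools outside the setting of abelian quotients, consistent with the paper's announced intention of ending this section with an analysis of the difficulties in any such induction on nilpotency class.
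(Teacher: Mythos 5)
Your proposal is correct and follows essentially the same route as the paper: reduce to abelian $Q$ via the remark after Theorem \ref{thm:implication}, note $\chi|_{\G_1}$ and $\chi|_{\G_2}$ are non-trivial and vanish on $N_1$, $N_2$, apply Renz's criterion (Theorem \ref{thm:sigma}) to place $[\chi|_{\G_1}]\in\Sigma^k(\G_1)$ and $[\chi|_{\G_2}]\in\Sigma^l(\G_2)$, and conclude with Meinert's Inequality. The extra care you take in truncating to $k'+l'=n$ and your worry about the boundary case $k=0$, $l=n$ are refinements the paper silently elides --- its proof applies Meinert's Inequality verbatim with whatever $k,l\geq 0$ the hypotheses supply --- so they reflect caution about the quoted formulation rather than any divergence in method.
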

\begin{remark}
  Note that the assumptions on the finiteness properties of
  the kernels here are more general than in the original statement, where we asked
  for $N_1$ to be of type $\Fin_n$ but made no assumptions on $N_2$. This asymmetry
  disappears in the abelian case, where we have the more attractive assumption that the finiteness 
  properties of the kernels $N_1$ and $N_2$ ``add up to $n$''. Of course, it is tempting to speculate
  whether this alternative assumption might be sufficient in the general case as well,
  but at this point, the abelian case aside, we have very little evidence to either support
  or reject this.
\end{remark}
\begin{proof}
  First, we note that we may assume that $Q$ is abelian instead of merely
  virtually abelian, by the remark after Theorem \ref{thm:implication}.

  We have to show that $[\chi]\in\Sigma^{n+1}(\G_1\times\G_2)$ for all 
  non-trivial homomorphisms $\chi:\G_1\times\G_2\to\R$ with $P\subset\ker\chi$.
  
  So let $\chi$ be any such a homomorphism. Since we can pick for any
  $\g=(\g_1,\g_2)\in\G_1\times\G_2$ a $\g'=(\g_1',\g_2)\in P$,
  so that $\g^{-1}\g'\in \G_1$, we have
  $\G_1\times\G_2=\G_1P$. So $\chi|_{\G_1}$ cannot be trivial, otherwise
  $\chi$ would be, too. Similarly $\chi|_{\G_2}\neq 0$.
    
  Now $\chi|_{\G_1}$ vanishes on $P\cap\G_1=N_1$. Since $N_1$
  is of type $\Fin_k$, Theorem \ref{thm:sigma} tells us that
  $[\chi|_{\G_1}]\in\Sigma^k(\G_1)$. By an analogous argument,
  $[\chi|_{\G_2}]\in\Sigma^l(\G_2)$. By Meinert's inequality
  this implies $[\chi]\in\Sigma^{k+l+1}(\G_1\times\G_2)\subset\Sigma^{n+1}(\G_1\times\G_2)$.
\end{proof}

Solving the $n$-$(n+1)$-$(n+2)$ Conjecture in this special case means that we can
also establish a corresponding case of the Virtual Surjections Conjecture. This
was previously proved by Kochloukova in \cite{Kochloukova}.

\begin{cor}
  Let $\G_1,\dots,\G_n$ be groups of type $\Fin_k$ and $P\leq\G_1\times\dots\times\G_n$
  a subgroup of their direct product. If $P$ virtually
  surjects to $k$-tuples of factors and $p_i(P)/(P\cap\G_i)$ is virtually
  abelian for all $i$ with $1\leq i\leq n$ then $P$ is of type $\Fin_k$.
  \label{cor:abelianvsc}
\end{cor}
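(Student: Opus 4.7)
The plan is to mirror the proof of Theorem \ref{thm:implication} (which derives the Virtual Surjections Conjecture from the $n$-$(n+1)$-$(n+2)$ Conjecture), except that instead of invoking the full conjecture at the inductive step, I will invoke Theorem \ref{thm:abeliann12}, whose virtual-abelian hypothesis on the quotient $Q$ is precisely matched by the assumption in the corollary that $p_i(P)/(P\cap\G_i)$ is virtually abelian for all $i$.

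First, as in the proof of Theorem \ref{thm:implication}, I would replace each $\G_i$ by its finite index subgroup $p_i(P)$, which preserves type $\Fin_k$ and does not affect the virtual-surjection hypothesis, nor the virtual-abelianness of $p_i(P)/(P\cap\G_i)$. So I may assume $P$ is a subdirect product of the $\G_i$. I then proceed by induction on $k$. The base case $k=1$ is Proposition \ref{prop:vs1}, since virtually abelian is virtually nilpotent. For the inductive step, I fix $k\geq 2$ and induct on $n$; the case $n\leq k$ is trivial because $P$ has finite index in $\G_1\times\dots\times\G_n$.

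For the case $n>k$, I set $T:=p_{1,\dots,n-1}(P)$, so that $P$ is the fibre product associated to the short exact sequences
\begin{gather*}
\ses{N_{1,\dots,n-1}}{T}{Q}\\
\ses{N_n}{\G_n}{Q}
\end{gather*}
where $Q\cong\G_n/N_n$ is virtually abelian by hypothesis. Since $T$ is a subdirect product of $\G_1,\dots,\G_{n-1}$ which still virtually surjects to $k$-tuples, and each $\G_i/(T\cap\G_i)$ is a quotient of $\G_i/(P\cap\G_i)$ (hence still virtually abelian), the induction on $n$ gives that $T$ is of type $\Fin_k$. Next, by Lemma \ref{lem:vskernel}, $N_{1,\dots,n-1}\leq\G_1\times\dots\times\G_{n-1}$ virtually surjects to $(k-1)$-tuples, and since $\G_i/(N_{1,\dots,n-1}\cap\G_i)=\G_i/(P\cap\G_i)$ is virtually abelian for each such $i$, the induction on $k$ gives that $N_{1,\dots,n-1}$ is of type $\Fin_{k-1}$.

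Now I apply Theorem \ref{thm:abeliann12}, with its parameter ``$n+1$'' equal to our $k$ (so the theorem's ``$n$'' equals $k-1$), taking the kernels to be $N_{1,\dots,n-1}$ of type $\Fin_{k-1}$ and $N_n$ of type $\Fin_0$; the sum $(k-1)+0$ meets the required inequality $\geq k-1$, and $Q$ is virtually abelian and finitely generated, so all hypotheses are satisfied. The theorem then yields that $P$ is of type $\Fin_k$, completing the induction. I do not anticipate a serious obstacle: the whole point is that, wherever the proof of Theorem \ref{thm:implication} invokes the conjectural $n$-$(n+1)$-$(n+2)$ statement for a virtually nilpotent $Q$, here the stronger hypothesis that $Q$ is virtually abelian lets us substitute the unconditional Theorem \ref{thm:abeliann12}; the only mild care required is in tracking that the virtual-abelianness condition is preserved by the reductions in the induction, which it is, because the relevant groups $\G_i/(T\cap\G_i)$ and $\G_i/(N_{1,\dots,n-1}\cap\G_i)$ are all quotients of $\G_i/(P\cap\G_i)$.
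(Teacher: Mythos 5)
Your proof is correct and is essentially the paper's own argument: the paper simply says the corollary "follows directly from Theorem \ref{thm:implication} (or, more precisely, from the Claim in the proof)", i.e.\ one reruns that double induction with Theorem \ref{thm:abeliann12} supplying the needed (virtually abelian) case of the \nonetwo\ Conjecture, exactly as you spell out.
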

\begin{proof}
  This follows directly from Theorem \ref{thm:implication} (or, more precisely,
  from the Claim in the proof).
\end{proof}

In fact Lemma \ref{lem:nilp} tells us that if
$P\leq\Gamma_1\times\dots\times\Gamma_n$ is a subdirect product that virtually surjects 
to $k$-tuples, where $2k>n$, then the quotients $\Gamma_i/N_i$ are virtually abelian, so we obtain:

\begin{cor}
The Virtual Surjections Conjecture holds if $2k>n$ and in particular for $n\leq 5$.
\label{cor:fewfactorsvsc}
\end{cor}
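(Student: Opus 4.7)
The plan is to combine the nilpotency bound in Lemma \ref{lem:nilp} with the abelian case of the Virtual Surjections Conjecture established in Corollary \ref{cor:abelianvsc}. The key arithmetic observation is that $2k>n$ forces $k-1\geq(n-1)/2$, and hence $\lceil(n-1)/(k-1)\rceil\leq 2$. This makes the bound from Lemma \ref{lem:nilp} degenerate: each quotient $\G_i/N_i$ is virtually nilpotent of class at most $1$, i.e.\ virtually abelian.

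So my first step is the standard reduction to a subdirect product: replace each $\G_i$ by its finite-index subgroup $p_i(P)\leq\G_i$ (which remains of type $\Fin_k$), so that we may assume $P$ is a subdirect product and $p_i(P)=\G_i$. Since $P$ still virtually surjects to $k$-tuples of these smaller factors, Lemma \ref{lem:nilp} applied with $k\geq 2$ and the hypothesis $2k>n$ gives that $\G_i/N_i$ is virtually abelian for every $i$. At this point Corollary \ref{cor:abelianvsc} applies verbatim and yields that $P$ is of type $\Fin_k$, which is exactly the Virtual Surjections Conjecture in this range.

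For the ``in particular'' part, I would enumerate the non-trivial pairs $(n,k)$ with $2\leq k<n$ and $n\leq 5$, namely
\[(3,2),\ (4,2),\ (5,2),\ (4,3),\ (5,3),\ (5,4).\]
The three cases with $k\geq 3$ all satisfy $2k>n$ and are thus handled by the first part of the corollary. The three cases with $k=2$ are precisely the Virtual Surjection to Pairs Theorem, which is covered by the $1$-$2$-$3$ Theorem combined with Theorem \ref{thm:implication} (as already noted in the remarks following that theorem). The cases $k=n$ and $k>n$ are trivial: virtual surjection to $n$-tuples means finite index in the ambient product, and finite-index subgroups of groups of type $\Fin_k$ inherit $\Fin_k$.

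I do not anticipate any substantial obstacle: the content of the statement is entirely packaged in Lemma \ref{lem:nilp} and Corollary \ref{cor:abelianvsc}, and the only ``hard'' point is the elementary inequality $\lceil(n-1)/(k-1)\rceil\leq 2\iff 2k\geq n+1\iff 2k>n$. The bookkeeping of the final case-check for $n\leq 5$ is the only thing to be careful about, and it relies critically on the $k=2$ case already being known, which prevents us from covering a hypothetical $n=6,k=3$ instance where $2k=n$ rather than $2k>n$.
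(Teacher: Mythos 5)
Your proposal is correct and follows essentially the same route as the paper: the $2k>n$ case is Lemma \ref{lem:nilp} (giving virtually abelian quotients, as the paper also notes via Corollary \ref{cor:virtabelian}) combined with Corollary \ref{cor:abelianvsc}, and the $n\leq 5$ case reduces to the Virtual Surjection to Pairs Theorem for $k=2$. You simply spell out the $\lceil(n-1)/(k-1)\rceil\leq 2 \iff 2k>n$ arithmetic and the case enumeration that the paper leaves implicit, and cite the $1$-$2$-$3$ Theorem plus Theorem \ref{thm:implication} where the paper cites \cite[Theorem A]{BHMSFinPres} directly — but the paper itself notes in Section \ref{sec:vsc} that these two are interchangeable.
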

\begin{proof}
The latter statement follows from the Virtual Surjections to Pairs Theorem, i.e.\ the
$k=2$ case of the Virtual Surjections Conjecture, proved in \cite[Theorem A]{BHMSFinPres}.
\end{proof}

\bibliographystyle{plain}
\bibliography{n12conjecture}

\end{document}